\newcommand{\A}{{\mathbb{A}}}
\newcommand{\C}{{\mathbb{C}}}
\newcommand{\Q}{{\mathbb{Q}}}
\newcommand{\Pro}{{\mathbb{P}}}
\newcommand{\R}{{\mathbb{R}}}
\newcommand{\Z}{{\mathbb{Z}}}
\newcommand{\Zln}{{\mathbb{Z}}/\ell^{\nu}}
\newcommand{\hZ}{\hat{{\mathbb{Z}}}}
\newcommand{\hFab}{\hat{F}_{\mathrm{ab}}}
\newcommand{\car}{\mathrm{char}\;}
\newcommand{\et}{\mathrm{\acute{e}t}}
\newcommand{\ok}{\bar{k}}
\newcommand{\pro}{\mathrm{pro}}
\newcommand{\colim}{\operatorname*{colim}}
\newcommand{\hocolim}{\operatorname*{hocolim}}
\newcommand{\holim}{\operatorname*{holim}}
\newcommand{\Et}{\mathrm{Et}\,}
\newcommand{\hEt}{\hat{\mathrm{Et}}\,}
\newcommand{\chEt}{\mathrm{c}\hat{\mathrm{Et}}\,}
\newcommand{\compl}{\hat{(\cdot)}}
\newcommand{\Gal}{\mathrm{Gal}}
\newcommand{\Hom}{\mathrm{Hom}}
\newcommand{\homg}{\mathrm{hom}_G}
\newcommand{\map}{\mathrm{map}}
\newcommand{\sk}{\mathrm{sk}}
\newcommand{\Sm}{\mathrm{Sm}}
\newcommand{\Tot}{\mathrm{Tot}}
\newcommand{\Ch}{{\mathcal C}}
\newcommand{\Eh}{{\mathcal E}}
\newcommand{\hEh}{\hat{\mathcal E}}
\newcommand{\hEhg}{\hat{{\mathcal E}}_G}
\newcommand{\Fh}{{\mathcal F}}
\newcommand{\Gh}{{\mathcal G}}
\newcommand{\Ghg}{{\mathcal G}_G}
\newcommand{\Lhg}{{\mathcal L}_G}
\newcommand{\Thg}{{\mathcal T}_G}
\newcommand{\Hh}{{\mathcal H}}
\newcommand{\hHh}{\hat{{\mathcal H}}}
\newcommand{\hHhg}{\hat{{\mathcal H}}_G}
\newcommand{\Lh}{{\mathcal L}}
\newcommand{\Mh}{{\mathcal M}}
\newcommand{\Rh}{{\mathcal R}}
\newcommand{\Sh}{{\mathcal S}}
\newcommand{\SHh}{{\mathcal{SH}}}
\newcommand{\hSHh}{{\hat{\mathcal{SH}}}}
\newcommand{\hSHhg}{{\hat{\mathcal{SH}}_G}}
\newcommand{\hSh}{\hat{\mathcal S}}
\newcommand{\hShg}{\hat{\mathcal S}_G}
\newcommand{\hShpg}{\hat{\mathcal S}_{\ast,G}}
\newcommand{\Spg}{\mathrm{Sp}(\Sh_{\ast,G})}
\newcommand{\hSpg}{\mathrm{Sp}(\hShpg)}
\newcommand{\hShp}{\hat{\mathcal S}_{\ast}}
\newcommand{\hSp}{\mathrm{Sp}(\hShp)}
\newcommand{\Th}{{\mathcal T}}
\newcommand{\Xh}{\mathcal{X}}
\newcommand{\oX}{\bar{X}}
\newcommand{\hMU}{\hat{M}U}
\newtheorem{theorem}{Theorem}[section]
\newtheorem{lemma}[theorem]{Lemma}
\newtheorem{prop}[theorem]{Proposition}
\newtheorem{defn}[theorem]{Definition}
\newtheorem{cor}[theorem]{Corollary}
\newtheorem{conjecture}[theorem]{Conjecture}
\theoremstyle{definition}
\newtheorem{remark}[theorem]{Remark}
\begin{document}
\title{Continuous group actions on profinite spaces}
\author{Gereon Quick}
\date{}
\begin{abstract}
For a profinite group, we construct a model structure on profinite spaces and profinite spectra with a continuous action. This yields descent spectral sequences for the homotopy groups of homotopy fixed point spaces and for stable homotopy groups of homotopy orbit spaces. Our main example is the Galois action on profinite \'etale topological types of varieties over a field. One motivation is to understand Grothendieck's section conjecture in terms of homotopy fixed points.
\end{abstract}
\maketitle
\section{Introduction}

Let $\hSh$ be the category of profinite spaces, i.e simplicial objects in the category of profinite sets.  Examples of profinite spaces arise in algebraic geometry. For a locally noetherian scheme $X$, we denote by $\hEt X$ the profinitely completed \'etale topological type of $X$ of Friedlander, see \cite{fried} and \cite{profinhom}. It is a profinite space that collects the information of the \'etale topology on the scheme. Now let $k$ be a field, $\ok$ a separable closure of $k$ and $G_k$ its Galois group over $k$. Let $X$ be a variety over $k$ and let $\oX=X\otimes_k \ok$ be the base change of $X$ to $\ok$. Since $G_k$ acts on $\oX$ and since $\hEt$ is a functor, there is an induced $G_k$-action on $\hEt \oX$. This Galois action is an important property of the \'etale topological type of $\oX$. Furthermore, there is a natural sequence of profinite spaces
\begin{equation}\label{fibersequence}
\hEt \oX \longrightarrow \hEt X \longrightarrow \hEt k.
\end{equation}
As $\hEt k \simeq BG_k$, this inspires one to think of $\hEt X$ as the homotopy orbit of $\hEt \oX$ under its $G_k$-action. In fact, this would generalize a theorem of Cox' on real algebraic varieties in \cite{cox} that there is a weak equivalence of pro-spaces $\Et X \simeq X(\C)\times_G EG$ for $G=\Gal(\C/\R)$. 
 
The main purpose of this paper is to provide a rigid framework for the Galois action on \'etale topological types via model categories in which a generalization of Cox' result and its applications can be proven. In particular, we are going to construct homotopy fixed points of \'etale topological types of varieties over arbitrary base fields. 

So let us describe the general setup and thereby outline the content of the paper. Let $G$ be an arbitrary profinite group and let $\hShg$ be the category of profinite spaces with $G$ acting continuously in each level and equivariant face and degeneracy maps. Various model structures on $\hSh$ have been constructed, see \cite{ensprofin} and \cite{profinhom}. We construct a left proper fibrantly generated model structure on $\hShg$ such that the weak equivalences (fibrations) in $\hShg$ are the maps that are weak equivalences (resp. fibrations) in $\hSh$ and the cofibrant objects are the profinite spaces with a free $G$-action. In order to do so, one is tempted to use an adjoint functor argument as Goerss did in \cite{goerss}, but the subtle point is that the mapping space functor $\hom_{\hSh}(G,-):\hSh \to \hShg$ is not the natural right adjoint of the forgetful functor $\hSh \to \hShg$ since the sets $\Hom_{\hSh}(X,Y)$ do not have to be profinite for general $X$ and $Y$ in $\hSh$. Therefore, encouraged by the referee of this paper, we give a direct proof with explicit generating fibrations and trivial fibrations.\footnote{The initial model structure and the argument in the proof were closer to the one of \cite{goerss} using an intermediate strict model structure followed by a Bousfield localization.}\\
Let $EG$ denote the universal profinite covering space of the classifying space $BG$. A convenient point of profinite spaces is that $EG$ and $BG$ are objects of $\hSh$. Hence the homotopy fixed points of a profinite $G$-space $X$ can be defined as the simplicial mapping space $X^{hG}=\homg(EG, RX)$ of continuous $G$-equivariant maps, where $RX$ denotes a functorial fibrant replacement in $\hShg$. Moreover, there is a descent spectral sequence for homotopy groups of homotopy fixed points of connected pointed profinite $G$-spaces
$$E_2^{s,t}=H^s(G;\pi_tX) \Rightarrow \pi_{t-s}(X^{hG}).$$
The $E_2$-term of this spectral sequence is continuous cohomology of the profinite groups $\pi_tX$, where $\pi_1X$ might be a nonabelian profinite group.\\ 
For the homotopy orbit space $X_{hG}=X\times_G EG$, we construct a spectral sequence computing the homology $H_{\ast}(X_{hG};M)$ for any profinite abelian group $M$. Both for homotopy fixed points and homotopy orbit spaces, the construction of the spectral sequence follows naturally from the work of Bousfield-Kan \cite{bouskan}.\\
There is also a stable homotopy category $\hSHhg$ for $G$-spaces using profinite $G$-spectra. The well-known machinery yields a homotopy orbit spectral sequence for stable profinite homotopy groups. This generalizes the notion of pro-$f$-spectra of Davis \cite{davis}.\\
In \cite{hfplt}, we study homotopy fixed points of profinite spectra with a continuous $G$-action in more detail. The main application is to provide a natural setting for the continuous action of the extended Morava stabilizer group $G_n$ on Lubin-Tate spectra $E_n$. Since $G_n$ acts continuously on the profinite homotopy groups $\pi_kE_n$, it seems natural to study the spectra $E_n$ as profinite spectra. The construction of a descent spectral sequence for the homotopy fixed point spectra $E_n^{hG_n}$, respectively $E_n^{hG}$ for any closed subgroup of $G_n$, then follows easily in the category of profinite spectra without using the results of \cite{devinatzhopkins}. These methods provide, in particular, a new construction for homotopy fixed points under open subgroups of $G_n$.

In the last section we return to the situation of a Galois group $G_k$ acting on the variety $\oX$. We will prove the following generalization of Cox's theorem mentioned above.
\begin{theorem}\label{etalehomorbitintro}
Let $k$ be a field with absolute Galois group $G_k$ and let $X$ be a geometrically connected variety over $k$. Then the canonical map 
$$(\lim_L \hEt X_L) \times_{G_k} EG_k \to \hEt X$$
is a weak equivalence of profinite spaces, where the limit is taken over all finite Galois extensions $L/k$ in $\ok$ and $X_L$ denotes $X\otimes_k L$.
\end{theorem}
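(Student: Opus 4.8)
The plan is to reduce the statement to a comparison of homotopy orbit constructions for the pro-system $\{\hEt X_L\}_L$ together with the known fibre sequence \eqref{fibersequence}, and to exploit the model structure on $\hShg$ constructed earlier in the paper. First I would set $G=G_k$ and observe that, since $X$ is geometrically connected, $\hEt \oX$ is a connected profinite space carrying a continuous $G$-action, and there is a tower of intermediate covers: for each finite Galois extension $L/k$ with group $G_{L}=\Gal(L/k)$ the space $\hEt X_L$ has a residual action of $G_L$, and $\hEt \oX \simeq \lim_L \hEt X_L$ as profinite $G$-spaces, the limit being taken along the quotient maps $G \twoheadrightarrow G_L$. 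The key point is that the continuity of the $G$-action on $\hEt\oX$ is exactly encoded by this presentation as a cofiltered limit of spaces with finite-quotient action, so that $\lim_L \hEt X_L$ is a legitimate object of $\hShg$, indeed a cofibrant one after replacing it by a free resolution, and $(\lim_L\hEt X_L)\times_G EG$ computes the homotopy orbit $(\hEt\oX)_{hG}$.

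Next I would identify $\hEt X$ with this homotopy orbit by a descent/comparison argument level by level in the Galois tower. For a finite Galois extension $L/k$ one has a fibre sequence $\hEt X_L \to \hEt X \to \hEt k$, and $\hEt k \simeq BG$, while $\hEt X \to BG_L$ is obtained as the quotient of $\hEt X_L$ by the free $G_L$-action; so on each finite stage $\hEt X \simeq (\hEt X_L)\times_{G_L} EG_L$, which is classical (this is essentially Cox's argument, or Friedlander's comparison of the étale topological type of a quotient with the Borel construction, applied to the finite Galois cover $X_L\to X$, together with the fact that $X_L\to X$ is an étale $G_L$-torsor). Passing to the limit over $L$, the right-hand sides assemble into $(\lim_L \hEt X_L)\times_G EG$ because $EG \simeq \lim_L EG_L$ and the Borel construction commutes with the relevant cofiltered limits on the level of profinite simplicial sets; the left-hand sides are constant equal to $\hEt X$. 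The canonical map in the statement is precisely the map induced on the limit, and the above identifications show it is a weak equivalence.

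The steps, in order: (1) present $\hEt\oX$ as $\lim_L \hEt X_L$ in $\hShg$ and check this is the correct continuous-action model; (2) recall/verify that $\hEt$ sends the étale $G_L$-torsor $X_L\to X$ to a free $G_L$-action with quotient $\hEt X$, so $\hEt X \simeq (\hEt X_L)_{hG_L}$ on each finite stage — here one uses that for a free action the strict orbit space and the homotopy orbit space agree, which holds because $\hEt X_L$ is cofibrant with its free $G_L$-action in the model structure of this paper; (3) check $EG\simeq \lim_L EG_L$ and that the Borel construction $(-)\times_{?}E?$ is compatible with the tower, so that the finite-stage equivalences are compatible and pass to the limit; (4) identify the resulting limit map with the canonical map of the theorem.

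I expect the main obstacle to be step (3) together with the limit argument in step (1): weak equivalences of profinite spaces are not automatically preserved under cofiltered limits, so one must work with a fibrant-enough (or cofibrant-enough) model of the tower $\{\hEt X_L\}$ and use that it is a tower of fibrations of profinite spaces — equivalently, one must know that $\hEt\oX \to \hEt X \to BG$ really is a fibre sequence exhibiting $\hEt X$ as a homotopy orbit, which is where the model structure on $\hShg$ of this paper does the work. A secondary technical point is the geometric-connectedness hypothesis: it guarantees $\hEt\oX$ is connected, hence that the relevant $\pi_0$ is a point and the homotopy-orbit/strict-orbit comparison and the descent spectral sequence behave as expected, with no disconnectedness obstruction.
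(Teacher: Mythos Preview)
Your approach is genuinely different from the paper's, but step~(2) contains a real gap. You assert that for each finite Galois extension $L/k$ one has $\hEt X \simeq (\hEt X_L)\times_{G_L} EG_L$, calling this ``classical'' and attributing it to Cox or Friedlander. Cox's theorem, however, is only for $k=\R$ and relies on the comparison with the complex points $X(\C)$; there is no general Friedlander result asserting that $\hEt$ carries a finite \'etale $G_L$-torsor to a strict quotient. In fact the paper explicitly notes (footnote to Theorem~\ref{etalehomorbit}) that $\hEt X_L \to \hEt X$ is a principal $G_L$-fibration \emph{only up to homotopy}, so the identification $\hEt X_L/G_L \simeq \hEt X$ must itself be proved as a weak equivalence. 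The natural way to do that is to check $\pi_1$ via Grothendieck's exact sequence and $H^{\ast}$ via Hochschild--Serre --- but these are precisely the tools the paper applies directly to the full profinite group $G_k$, making the detour through finite stages redundant.

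The paper's proof bypasses the bootstrapping entirely: it checks the two conditions in the definition of weak equivalence in $\hSh$ directly for the map $\varphi$. For $\pi_1$ it compares the short exact sequence of profinite fundamental groups coming from the principal $G_k$-fibration $\chEt\oX \times EG_k \to \chEt\oX \times_{G_k} EG_k$ with Grothendieck's sequence $1\to\pi_1^{\et}(\oX)\to\pi_1^{\et}(X)\to G_k\to 1$. For cohomology with finite local coefficients it compares the Serre spectral sequence of the fibre sequence $\chEt\oX \to \chEt\oX\times_{G_k}EG_k \to BG_k$ with the Hochschild--Serre spectral sequence for \'etale cohomology; both have $E_2$-term $H^s(G_k;H^t_{\et}(\oX;F))$ and are strongly convergent since $H^t_{\et}(\oX;F)$ vanishes above $\dim\oX$. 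Even if you fill your gap by this same method at each finite level, you still owe an argument that $\lim_L\bigl((\hEt X_L)\times_{G_L}EG_L\bigr)$ agrees with $(\lim_L\hEt X_L)\times_{G_k}EG_k$, which is a nontrivial colimit/limit interchange in $\hSh$.
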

We would like to prove the theorem directly for $\hEt \oX$ but it is not clear that $\hEt \oX$ is an object of the category of profinite $G_k$-spaces defined above, i.e. that the action of $G_k$ remains level-wise continuous after taking limits over hyerpcoverings. But the canonical $G_k$-equivariant map $\hEt \oX \longrightarrow \lim_L \hEt X_L$ is a weak equivalence of profinite spaces and the latter space has all the properties we need. In particular, for a variety over a field, the theorem provides the following intuition with a precise meaning: The two perspectives of viewing the \'etale topological type of $X$ as a profinite space $\hEt X$ over $\hEt k \simeq BG_k$ or as a profinite space $\hEt \oX$ together with its induced $G_k$-action are essentially equivalent.\\
By Theorem \ref{etalehomorbitintro}, the homotopy orbit spectral sequence above may be written as a Galois descent spectral sequence for stable profinite \'etale homotopy groups of $X$:
$$E^2_{p,q}=H_p(G_k;\pi^{\et,s}_q(\oX)) \Rightarrow \pi^{\et,s}_{p+q}(X).$$
Moreover, we will show that the stable \'etale realization functor can be viewed as a functor from motivic spectra to $\hSHh_{G_k}$. Finally, we show that a refined version of \'etale cobordism of \cite{etalecob} satisfies Galois descent in the sense that for any variety $X$ over $k$ there is a spectral sequence 
$$E_2^{s,t}=H^s(G_k;\hMU_{\et}^{t}(\oX)) \Rightarrow \hMU_{\et}^{s+t}(X).$$
An $\ell$-adic version of \'etale (co)bordism has been used in \cite{cycles} in order to study the integral cycle map from algebraic cycles to \'etale homology for schemes over an algebraically closed base field. The descent spectral sequence above should be useful for a future application of the techniques of \cite{cycles} for varieties over finite fields.

But the main motivation for studying homotopy fixed points under the Galois action is Grothendieck's section conjecture. In fact, the conjecture can be formulated as an isomorphism on $k$-rational points $X(k)$ and $G_k$-homotopy fixed points of $\hEt \oX$. Namely, as the curves involved in the conjecture are $K(\pi,1)$-varieties, we get an isomorphism for continuous cohomology
$$
\Hom_{\hHh/\hEt k}(\hEt k,\hEt X) \cong \Hom_{\mathrm{out},G_k}(G_k,\pi_1X),$$
where the right-hand side denotes outer homomorphisms that are compatible with the projection to $G_k$. Moreover, with the homotopy equivalence $\hEt k \simeq BG_k$ and Theorem \ref{etalehomorbitintro} we get by adjunction an isomorphism 
\begin{equation}\label{scintro}
\Hom_{\hHh/\hEt k}(\hEt k,\hEt X) \cong \Hom_{\hHh_{G_k}}(EG_k,\hEt \oX)\cong \pi_0(\hEt \oX)^{hG_k}.\end{equation}
So one could read the section conjecture as the conjecture that the canonical map from $X(k)$ to the homotopy fixed point set on the right-hand side of (\ref{scintro}) is bijective.\\ 
This point of view might be of interest as analogues of the section conjecture over the reals, shown by Mochizuki in  \cite{mochizuki2}, could be proved by Pal using homotopy fixed points results \cite{pal1}. Over $\R$, Cox' theorem is a crucial point in the proof of the section conjecture. It is likely, that its generalization, Theorem \ref{etalehomorbitintro} above, is useful for an extension of the methods over $\R$ to fields finitely generated over $\Q$ or $\Q_p$.\\
The key new progress of this paper for this direction is that Theorem \ref{etalehomorbitintro} allows the reinterpretation (\ref{scintro}) and that, since $B\pi_1\oX$ and $\hEt \oX$ are naturally profinite spaces, the machinery described above provides a good notion of homotopy fixed points of these spaces. This opens a new homotopy theoretical tool kit to analyze the section conjecture.

{\bf Acknowledgements}: Apart from the motivation by \'etale homotopy theory, the starting point of this project was a hint by Dan Isaksen that the profinite spectra of \cite{etalecob} should fit well in the picture for Lubin-Tate spectra. I would like to thank him very much to share this idea with me. I would like to thank Fabien Morel for a discussion on \'etale homotopy types. I am grateful to Kirsten Wickelgren, Daniel Davis, Johannes Schmidt and Mike Hopkins for helpful comments. Moreover, I would like to express my gratitude to the kind referee of this paper whose suggestions and comments helped to clarify and improve the content. Finally, I am grateful to the Institute for Advanced Study in Princeton for its support and hospitality and the inspiring atmosphere in which the final version of this paper has been written. 
\section{Homotopy theory of profinite $G$-spaces}

\subsection{Profinite spaces}
First we recall some basic notions for profinite spaces and their homotopy category from \cite{ensprofin} and \cite{profinhom}. For a category $\Ch$ with small limits, the pro-category of $\Ch$, denoted pro-$\Ch$, has as objects all cofiltering diagrams $X:I \to \Ch$. Its sets of morphisms are defined as
$$\Hom_{\pro-\Ch}(X,Y):=\lim_{j\in J}\colim_{i\in I} \Hom_{\Ch}(X_i,Y_j).$$
A constant pro-object is indexed by the category with one object and one identity map. The functor sending an object $X$ of $\Ch$ to the constant pro-object with value $X$ makes $\Ch$ a full subcategory of pro-$\Ch$. The right adjoint of this embedding is the limit functor $\lim$: pro-$\Ch$ $\to \Ch$, which sends a pro-object $X$ to the limit in $\Ch$ of the diagram corresponding to $X$.\\
Let $\Eh$ denote the category of sets and let $\Fh$ be the full subcategory of finite sets. Let $\hEh$ be the category of compact Hausdorff totally disconnected topological spaces. We may identify $\Fh$ with a full subcategory of $\hEh$ in the obvious way. The limit functor $\lim$: pro-$\Fh \to \hEh$ is an equivalence of categories.\\
We denote by $\hSh$ (resp. $\Sh$) the category of simplicial profinite sets (resp. simplicial sets). The objects of $\hSh$ (resp. $\Sh$) will be called {\em profinite spaces} (resp. {\em spaces}). The forgetful functor $\hEh \to \Eh$ admits a left adjoint $\compl:\Eh \to \hEh$. It induces a functor $\compl:\Sh \to \hSh$, which is called {\em profinite completion}. It is left adjoint to the forgetful functor $|\cdot|:\hSh \to \Sh$ which sends a profinite space to its underlying simplicial set.\\ 
For a profinite space $X$ we define the set $\Rh(X)$ of simplicial open equivalence relations on $X$. An element $R$ of $\Rh(X)$ is a simplicial profinite subset of the product $X\times X$ such that, in each degree $n$, $R_n$ is an equivalence relation on $X_n$ and an open subset of $X_n\times X_n$. It is ordered by inclusion. For every element $R$ of $\Rh(X)$, the quotient $X/R$ is a simplicial finite set and the map $X \to X/R$ is a map of profinite spaces. The canonical map $X \to \lim_{R\in \Rh(X)} X/R$ is an isomorphism in $\hSh$, cf. \cite{ensprofin}, Lemme 1.\\ 
Let $X$ be a profinite space. The continuous cohomology $H^{\ast}(X;\pi)$ of $X$ with coefficients in the topological abelian group $\pi$ is defined as the cohomology of the complex $C^{\ast}(X;\pi)$ of continuous cochains of $X$ with values in $\pi$, i.e. $C^n(X;\pi)$ denotes the set $\Hom_{\hat{\Eh}}(X_n,\pi)$ of continuous maps $\alpha:X_n \to \pi$ and the differentials $\delta^n:C^n(X;\pi)\to C^{n+1}(X;\pi)$ are the morphisms associating to $\alpha$ the map $\sum_{i=0}^{n+1}\alpha \circ d_i$, where $d_i$ denotes the $i$th face map of $X$, see \cite{tate} and \cite{ensprofin}. If $\pi$ is a finite abelian group and $Z$ a simplicial set, then the cohomologies $H^{\ast}(Z,\pi)$ and $H^{\ast}(\hat{Z},\pi)$ are canonically isomorphic.\\
If $\Gamma$ is an arbitrary profinite group, we may still define the first cohomology of $X$ with coefficients in $\Gamma$ as done by Morel in \cite{ensprofin}, p. 355. The functor 
$$X\mapsto \Hom_{\hEh}(X_0,\Gamma)$$
is represented in $\hSh$ by a profinite space $E\Gamma$. We define the $1$-cocycles $Z^1(X;\Gamma)$ to be the set of continuous maps $f:X_1 \to \Gamma$ such that $f(d_0x)f(d_2x)=f(d_1x)$ for every $x \in X_1$. The functor $X\mapsto Z^1(X;\Gamma)$ is represented by a profinite space $B\Gamma$. Explicit constructions of $E\Gamma$ and $B\Gamma$ may be given in the standard way as in $\Sh$. Furthermore, there is a map $\delta:\Hom_{\hSh}(X,E\Gamma) \to Z^1(X;\Gamma)\cong \Hom_{\hSh}(X,B\Gamma)$ which sends $f:X_0 \to \Gamma$ to the $1$-cocycle $x\mapsto \delta f(x)=f(d_0x)f(d_1x)^{-1}$. We denote by $B^1(X;\Gamma)$ the image of $\delta$ in $Z^1(X;\Gamma)$ and we define the pointed set $H^1(X,\Gamma)$ to be the quotient $Z^1(X;\Gamma)/B^1(X;\Gamma)$. Finally, if $X$ is a profinite space, we define $\pi_0X$ to be the coequalizer in $\hEh$ of the diagram $d_0,d_1:X_1 \rightrightarrows X_0$.\\
The profinite fundamental group of $X$ is defined via covering spaces. There is a universal profinite covering space $(\tilde{X},x)$ of $X$ at a vertex $x \in X_0$. Then $\pi_1(X,x)$ is defined to be the group of automorphisms of $(\tilde{X},x)$ over $(X,x)$. It has a natural structure of a profinite group as the limit of the finite automorphism groups of the finite Galois coverings of $(X,x)$. The collection of the $\pi_1(X,x)$ for all $x\in X_0$ defines a profinite fundamental groupoid $\Pi X$. A profinite local coefficient system $\Mh$ on $X$ is a functor from $\Pi X$ to profinite abelian groups. The cohomology of $X$ with coefficients in $\Mh$ is then defined as the cohomology of the complex $\hom_{\Pi X}(\tilde{X}_*,\Mh)$ of continuous natural transformations. For any further details, we refer the reader to \cite{profinhom}. The relation of this $\pi_1(X,x)$ to the usual fundamental group of a simplicial set is described by the following result.
\begin{prop}\label{profinitecompletion}
For a pointed simplicial set $X$, the canonical map from the profinite group completion of $\pi_1(X)$ to $\pi_1(\hat{X})$ is an isomorphism, i.e. $$\widehat{\pi_1(X)}\cong \pi_1(\hat{X})$$ 
as profinite groups. 
\end{prop}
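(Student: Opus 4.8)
The strategy is to realize the canonical map as a continuous homomorphism $\phi\colon\widehat{\pi_1(X)}\to\pi_1(\hat X)$ of profinite groups and to check that it is an isomorphism by testing it against finite groups. First I would reduce to the case $X$ connected: $\pi_1$ only depends on the basepoint component, and since $\pi_0$ commutes with profinite completion, the basepoint component of $\hat X$ is the profinite completion of the basepoint component of $X$. The map $\phi$ itself comes from the unit $\eta\colon X\to|\hat X|$ of the adjunction $\compl\dashv|\cdot|$ together with the functoriality of covering spaces, which produces a homomorphism $\pi_1(X)\to\pi_1(\hat X)$ into a profinite group, hence, by the universal property of profinite completion, a continuous homomorphism $\phi$ out of $\widehat{\pi_1(X)}$.

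The plan is then to invoke the elementary fact that a continuous homomorphism $\psi\colon A\to B$ of profinite groups is an isomorphism provided that, for every finite group $F$, the induced map $\psi^{*}\colon\Hom(B,F)\to\Hom(A,F)$ on continuous homomorphisms is a bijection. Indeed, surjectivity of $\psi^{*}$ applied to the finite quotients $F=A/U$ shows $\ker\psi\subseteq U$ for every open normal $U\trianglelefteq A$, hence $\ker\psi=1$; and injectivity of all the maps $\psi^{*}$ says that $\psi$ is an epimorphism in profinite groups, which forces $\psi$ to have dense — hence, by compactness, full — image (if $\overline{\psi(A)}\ne B$, pick an open normal $V\trianglelefteq B$ separating $\overline{\psi(A)}$ from a point of $B$ and apply the usual transposition trick to the resulting proper inclusion of subgroups of the finite group $B/V$). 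Thus it suffices to prove that
$$\phi^{*}\colon\Hom(\pi_1(\hat X),F)\longrightarrow\Hom(\widehat{\pi_1(X)},F)=\Hom(\pi_1(X),F)$$
is a bijection for every finite $F$, the last identification being the defining property of profinite completion.

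To see this I would identify both sides with the same cocycle data. By classical covering space theory for simplicial sets, $\Hom(\pi_1(X),F)$ is in natural bijection with the set of isomorphism classes of pointed $F$-Galois coverings of $X$, and, through the cocycle description of $BF$, with the quotient of $Z^1(X;F)=\Hom_{\Sh}(X,BF)$ by based coboundaries. On the other side, the profinite fundamental group $\pi_1(\hat X)$ of \cite{profinhom} is by construction the automorphism group of the universal profinite covering, so $\Hom(\pi_1(\hat X),F)$ — here using that $F$ is finite, so that the relevant coverings of $\hat X$ are finite — is identified with the isomorphism classes of pointed $F$-Galois coverings of the profinite space $\hat X$, equivalently with the quotient of $Z^1(\hat X;F)=\Hom_{\hSh}(\hat X,BF)$ by based coboundaries, using the representability of $Z^1(-;F)$ by $BF$ in $\hSh$ recalled above. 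Now the key observation is that $Z^1(X;F)=Z^1(\hat X;F)$ and that the based coboundary relations coincide: all of this data is defined in terms of (continuous) maps out of the discrete sets $X_1$ and $X_0$, and a continuous map out of the profinite completion of a discrete set is just an arbitrary map out of the set. These identifications are compatible with $\eta$, hence with $\phi^{*}$, so $\phi^{*}$ is a bijection and, by the previous paragraph, $\phi$ is an isomorphism. This is the nonabelian, $\pi_1$-level counterpart of the isomorphism $H^{\ast}(Z,\pi)\cong H^{\ast}(\hat Z,\pi)$ for finite coefficients recalled above.

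The hardest part will be the bookkeeping in this last step rather than any conceptual point: one must pass carefully between $\Hom(\pi_1,F)$ and the nonabelian $H^1(-;F)$ — the latter classifies $F$-torsors only up to conjugation in $F$, so the chosen point in the fibre over the basepoint has to be carried along throughout — and one must extract from \cite{profinhom} exactly the covering-theoretic description of $\Hom(\pi_1(\hat X),F)$ that is used above, namely that the profinite covering theory of profinite spaces is computed by the cocycle complex when the structure group is finite.
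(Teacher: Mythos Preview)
The paper does not actually prove this proposition; it is recalled from the author's earlier paper \cite{profinhom} (see the sentence ``For any further details, we refer the reader to \cite{profinhom}'' just before the statement), so there is no in-paper argument to compare against. Your approach is the natural one and is correct: test the continuous homomorphism $\phi$ against all finite groups $F$, and identify both $\Hom(\pi_1(\hat X),F)$ and $\Hom(\pi_1(X),F)$ with the same pointed nonabelian cocycle data via the adjunction $\Hom_{\hSh}(\hat X,BF)=\Hom_{\Sh}(X,BF)$, using that $BF$ is a simplicial finite set. This is exactly the covering-theoretic characterisation of $\pi_1$ for profinite spaces set up in \cite{profinhom}, so your argument is essentially what one would expect the omitted proof to be.

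One small caution on the reduction step. It is true that $\pi_0(\hat X)\cong\widehat{\pi_0(X)}$ since $\compl$ is a left adjoint, but this does not by itself guarantee that the basepoint component of $\hat X$ coincides with the completion of the basepoint component of $X$: level-wise profinite completion preserves finite disjoint unions of sets but not infinite ones, so for $X$ with infinitely many components the claim needs an extra word. This is not a real obstacle---your cocycle argument works cleanly once $X$ is connected, and in the general case one can invoke directly that, by its covering-space definition in \cite{profinhom}, $\pi_1(\hat X,x)$ depends only on the basepoint component of $\hat X$, then check that the inclusion of the basepoint component of $X$ induces an isomorphism on both sides---but the sentence as written slightly overstates what comes for free.
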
    
\begin{defn}\label{defnwe}
A morphism $f:X\to Y$ in $\hSh$ is called\\
1) a {\em weak equivalence} if the induced map $f_{\ast}:\pi_0(X) \to \pi_0(Y)$ is an isomorphism of profinite sets, $f_{\ast}:\pi_1(X,x) \to \pi_1(Y,f(x))$ is an isomorphism of profinite groups for every $x\in X_0$ and $f^{\ast}:H^q(Y,\Mh) \to H^q(X,f^{\ast}\Mh)$ is an isomorphism for every local coefficient system $\Mh$ of finite abelian groups on $Y$ for every $q\geq 0$;\\
2) a {\em cofibration} if $f$ is a level-wise monomorphism;\\
3)a {\em fibration} if it has the right lifting property with respect to every cofibration that is also a weak equivalence. 
\end{defn}
These classes of morphisms fit into a simplicial fibrantly generated left proper model structure on $\hSh$. For every natural number $n\geq 0$ we choose a finite set with $n$ elements, e.g. the set $\{0,1, \ldots, n-1\}$, as a representative of the isomorphism class of sets with $n$ elements. We denote the set of these representatives by $\Th$. Moreover, for every isomorphism class of finite groups, we choose a representative with underlying set $\{0,1,\ldots,n-1\}$. Hence for each $n$ we have chosen as many groups as there are relations on the set $\{0,1,\ldots,n-1\}$. This ensures that the collection of these representatives forms a set which we denote by $\Gh$. Finally, we denote by $\Lh$ the collection of representatives of isomorphism class of finite abelian modules $M \in \Gh$ with an action of a finite group $\Gamma \in \Gh$. By the definition of $\Gh$, it is clear that $\Lh$ forms a set as well.\\
Let $\Gamma$ be a profinite group and let $\hSh/B\Gamma$ denote the category of profinite spaces equipped with a map to $B\Gamma$. Moreover, let $\hSh_{\Gamma}$ be the category of profinite spaces with a levelwise continuous $\Gamma$-action. We will discuss this category in more detail in the next section. There is a functor $\hSh_{\Gamma} \to \hSh/B\Gamma$ sending $Y$ to the Borel construction $E\Gamma \times_{\Gamma} Y \to B\Gamma$. On the other hand, there is the functor $\hSh/B\Gamma \to \hSh_{\Gamma}$ sending $X \to B\Gamma$ to the $\Gamma$-principal fibration $E\Gamma\times_{B\Gamma}X \to X$. These two functors form a pair of adjoint functors, cf. \cite{gj}, VI Lemma 4.6, i.e. there is a natural bijection
\begin{equation}\label{adjoint}
\Hom_{\hSh_{\Gamma}}(E\Gamma\times_{B\Gamma}X,Y)\cong \Hom_{\hSh/B\Gamma}(X,E\Gamma \times_{\Gamma} Y).
\end{equation}
%
For a profinite $\Gamma$-module $M$, we denote the profinite space $E\Gamma \times_{\Gamma} K(M,n)$ by $K^{\Gamma}(M,n)$ and similarly $E\Gamma \times_{\Gamma} L(M,n)$ by $L^{\Gamma}(M,n)$. 
We define two sets $P$ and $Q$ of morphisms in $\hSh$ as follows:
$$\begin{array}{lll}
P & \mathrm{consisting~of} & E\Gamma \to B\Gamma, B\Gamma \to \ast,~ L^{\Gamma}(M,n)\to K^{\Gamma}(M,n+1),\\
   &   &  K^{\Gamma}(M,n)\to B\Gamma,~K(S,0)\to K(S,0)\times K(S,0)\\
   &   & K(S,0) \to \ast ~ \mathrm{for~every~finite~set}~S\in \Th,\\
    &  & \mathrm{every~finite~abelian}~\Gamma-\mathrm{module}~M\in \Lh,\\
   &   & \mathrm{every~finite~group}~\Gamma \in \Gh, ~\mathrm{and~every}~ n\geq 0;\\
Q & \mathrm{consisting~of} & E\Gamma \to \ast,~ L^{\Gamma}(M,n) \to B\Gamma~ \mathrm{for~every}~\Gamma \in \Gh,\\
& &  \mathrm{every}~M\in \Lh ~\mathrm{and~every}~ n\geq 0. 
\end{array}$$
In \cite{profinhom}, the following theorem had already been claimed, but the classes $P$ and $Q$ had been chosen too small. The kind referee of the present paper pointed this out and also gave a suggestion how to correct the error. We are very grateful for this hint. 
\begin{theorem}\label{modelstructure}
The above defined classes of weak equivalences, cofibrations and fibrations provide $\hSh$ with the structure of a fibrantly generated left proper model category with $P$ the set of generating fibrations and $Q$ the set of generating trivial fibrations. We denote the homotopy category by $\hHh$.
\end{theorem}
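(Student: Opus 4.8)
The plan is to verify the model category axioms via the dual of Kan's recognition theorem for cofibrantly generated model categories, i.e. the recognition theorem for \emph{fibrantly} generated model categories. Since $\hSh$ has all small limits and colimits (simplicial objects in the category $\hEh$, which is complete and cocomplete), it suffices to check: (1) the weak equivalences satisfy the two-out-of-three property and are closed under retracts; (2) the domains of the maps in $P$ (resp. $Q$) are small relative to $P$-cofibrations (resp. $Q$-cofibrations) — here ``small'' in the sense appropriate to the cosmall/fibrant setting, which in practice follows because every object of $\hSh$ is a cofiltered limit of finite simplicial sets and the relevant mapping sets out of finite objects commute with the colimits in question; (3) $Q$-injective maps (maps with the left lifting property against $Q$) coincide with the maps that are both $P$-injective and weak equivalences; and (4) every $P$-injective map is a weak equivalence, while every map that is a weak equivalence and $Q$-injective is $P$-injective. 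Then cofibrations are forced to be the maps with the left lifting property against the trivial fibrations, and one checks separately that these are exactly the level-wise monomorphisms.

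First I would analyze what it means for a map to be $P$-injective. A map $Y \to Z$ has the left lifting property against $E\Gamma \to B\Gamma$, against $K^{\Gamma}(M,n) \to B\Gamma$, against $L^{\Gamma}(M,n) \to K^{\Gamma}(M,n+1)$, against $B\Gamma \to \ast$, and against the three maps involving $K(S,0)$. Using the adjunction (\ref{adjoint}) relating $\hSh_\Gamma$ and $\hSh/B\Gamma$, lifting against the maps involving $E\Gamma, B\Gamma, K^\Gamma, L^\Gamma$ translates into equivariant lifting statements for $\Gamma$-principal fibrations and equivariant Eilenberg--MacLane objects $K(M,n)$; concretely, $P$-injectivity should force: surjectivity on $\pi_0$ coming from $K(S,0)\to \ast$ type maps, injectivity/control of $\pi_0$ from the $K(S,0)\to K(S,0)\times K(S,0)$ maps, control of $\pi_1$ and the higher local-coefficient cohomology from the $K^\Gamma, L^\Gamma$ tower, and triviality of the relevant $H^1$ from the $E\Gamma\to B\Gamma$ map. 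The point of enlarging $P$ and $Q$ beyond what appeared in \cite{profinhom} is precisely to have enough generators so that $P$-injective $=$ trivial fibration in the sense of Definition \ref{defnwe} and $Q$-injective $=$ fibration; I would make this matching explicit degree by degree, using that a surjection of profinite sets admits a continuous section and that Postnikov-type obstruction theory for profinite spaces (as developed in \cite{profinhom}) identifies the obstructions with classes in continuous cohomology $H^{\ast}(-;\Mh)$ and in $H^1(-;\Gamma)$.

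The heart of the proof is the factorization and lifting axioms, obtained from the fibrant analogue of the small object argument applied to $P$ and to $Q$. The main obstacle I anticipate is the smallness (cosmallness) hypothesis: unlike the cofibrantly generated case where one cites that spheres are small, here one must argue that the sources of the maps in $P$ and $Q$ — namely $E\Gamma$, $B\Gamma$, $L^\Gamma(M,n)$, $K^\Gamma(M,n)$, $K(S,0)$, and $K(S,0)\times K(S,0)$ for finite $\Gamma$, $M$, $S$ — are small relative to the class of $P$-cofibrations when $\hSh$ is equipped with its cofiltered-limit structure. This works because each such source is a \emph{finite} profinite space (levelwise finite, with trivial topology), so $\Hom_{\hSh}(-, \text{source})$... rather, one needs that maps \emph{into} these objects behave well; the key technical input, which I would isolate as a lemma, is that $\hSh$ is the pro-category of finite simplicial sets up to a suitable completion, and that the tower built by the cosmall-object argument stabilizes because obstruction groups are finite at each stage. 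Once factorizations exist, the retract argument gives the remaining lifting axiom in the standard way, left properness follows because cofibrations are monomorphisms and cohomology (being continuous cohomology, a derived-functor-type invariant that sends pushouts along monomorphisms to the expected long exact / Mayer--Vietoris sequences) is preserved under cobase change, and the simplicial structure is inherited from the simplicial enrichment of $\hSh$ over $\Sh$ via the completion/forgetful adjunction, for which one checks Quillen's SM7 axiom against $P$ and $Q$.
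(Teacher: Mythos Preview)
Your overall strategy --- verify the dual of Kan's recognition theorem by analyzing $P$-proj and $Q$-proj via the adjunction (\ref{adjoint}) and the representability of cocycles --- is exactly the paper's approach, and your identification of which generating maps control $\pi_0$, $H^1(-;\Gamma)$, and the higher $H^n(-;\Mh)$ is correct. However, the dualization is scrambled in several places and this creates real gaps.

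First, in the fibrantly generated setup the cosmallness hypothesis concerns the \emph{codomains} of the maps in $P$ and $Q$, not the sources; concretely one must show that
\[
\colim_\alpha \Hom_{\hSh}(Y_\alpha,K^\Gamma(M,n)) \longrightarrow \Hom_{\hSh}(\lim_\alpha Y_\alpha,K^\Gamma(M,n))
\]
is an isomorphism, and the paper does this by unwinding maps into $K^\Gamma(M,n)$ via (\ref{adjoint}) as $\Gamma$-equivariant cocycles and invoking that $\hEh_\Gamma$ is a pro-category (where all objects are cosmall). Your ``levelwise finite'' remark and the vague ``obstruction groups are finite at each stage'' are not the right input here. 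Second, your conditions (3)--(4) are inconsistent as written: with your own definition $X$-injective $:=$ LLP$(X)$, one has $P$-inj $=P$-proj equal to the \emph{trivial cofibrations} and $Q$-inj $=Q$-proj equal to the \emph{cofibrations}, not ``trivial fibrations'' and ``fibrations'' as you later assert. The correct identity is $P$-proj $= W \cap Q$-proj (your (3) has $P$ and $Q$ interchanged), and in addition one must verify the separate condition $Q$-fib $\subseteq P$-fib $\cap\, W$ (the dual of $J$-cell $\subseteq I$-cof $\cap\, W$), which you omit entirely; the paper handles it by noting that every map in $Q$ is a composite of maps in $P$ and that monomorphisms lie in $Q$-proj, so maps in $Q$-fib have the RLP against all monomorphisms and are therefore simplicial homotopy equivalences. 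Finally, left properness follows immediately from the fact that every object is cofibrant (since monomorphisms are in $Q$-proj), so no Mayer--Vietoris argument is needed.
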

\begin{proof}
We show that there is a fibrantly generated model structure by checking the four conditions of the dual of Kan's Theorem 11.3.1 in \cite{hirsch}. It is clear that the weak equivalences satisfy the 2-out-of-3 property and are closed under retracts. We denote by $P$-cocell the subcategory of relative $P$-cocell complexes consisting of limits of pullbacks of elements of $P$. We write $P$-proj for the maps having the left lifting property with respect to all maps in $P$ and $P$-fib for the maps having the right lifting property with respect to all maps in $P$-proj. And we do so similarly for $Q$. Now we check the remaining hypotheses of Kan's Theorem 11.3.1 in \cite{hirsch}.\\ 
1. We have to show that the codomains of the maps in $P$ and $Q$ are cosmall relative to $P$-cocell and $Q$-cocell, respectively. This is clear for the terminal object $\ast$. The proof of Theorem 2.11 of \cite{profinhom} shows that $B\Gamma$ is cosmall relative to $P$. We check this now for the objects $K^{\Gamma}(M,n)$. The case of $K(S,0)\times K(S,0)$ is similar. By definition of cosmallness we have to show that the canonical map 
$$\varphi:\colim_{\alpha} \Hom_{\hSh}(Y_{\alpha},K^{\Gamma}(M,n)) \to \Hom_{\hSh}(\lim_{\alpha}Y_{\alpha},K^{\Gamma}(M,n))$$ is an isomorphism for some cardinal $\kappa$, where $Y_{\alpha}$ is any projective system whose indexing category is of cardinality $\kappa$. By the definition of the spaces $K^{\Gamma}(M,n)$ and the adjunction (\ref{adjoint}), this map is equal to the map 
$$\colim_{\alpha} \bigcup_{f_{\alpha}:Y_{\alpha}\to B\Gamma} Z_{\Gamma}^n(E\Gamma \times_{B\Gamma,f_{\alpha}} Y_{\alpha};M) \to 
\bigcup_{f:\lim Y_{\alpha} \to B\Gamma}Z_{\Gamma}^n(E\Gamma \times_{B\Gamma,f} \lim_{\alpha}Y_{\alpha};M),$$ 
where $Z^n_{\Gamma}(E\Gamma \times_{B\Gamma,f} Y,M)$ denotes the subgroup of the $n$th cocycles in the complex of $\Gamma$-equivariant cochains $C^n_{\Gamma}(E\Gamma \times_{B\Gamma} Y;M):=\Hom_{\hEh_{\Gamma}}(\Gamma \times Y_n, M)$. The unions are taken over all maps $f:Y\to B\Gamma$, where we use the cosmallness of $B\Gamma$ to deduce $\colim_{\alpha}\Hom_{\hSh}(Y_{\alpha},B\Gamma)=\Hom_{\hSh}(\lim_{\alpha} Y_{\alpha},B\Gamma)$, cf. \cite{profinhom}, Theorem 2.11. Since $\hEh_{\Gamma}$ is equivalent to the pro-category of finite $\Gamma$-sets and since all objects in a pro-category are cosmall by \cite{prospectra} Corollary 3.5, we see that the map 
$$\colim_{\alpha} \Hom_{\hEh_{\Gamma}}(\Gamma \times Y_{\alpha,n}, M) \to \Hom_{\hEh_{\Gamma}}(\Gamma \times \lim_{\alpha}Y_{\alpha,n},M)$$ 
is an isomorphism. Hence $\varphi$ is an isomorphism.\\ 
2. We have to show that every $Q$-fibration is both a $P$-fibration and a weak equivalence. First we observe that every map in $Q$ is a composition of maps in $P$ and hence a relative $P$-cocell complex. By \cite{hoveybook},  Lemma 2.1.10, this implies that every map in $P$-proj is an element of $Q$-proj and hence also $Q$-fib $\subset$ $P$-fib.\\
Furthermore, if $f$ is a monomorphism in each dimension, then $f$ is an element of $Q$-proj by Lemma 2.8 of \cite{profinhom}. Hence $Q$-fib is contained in the class of maps that have the right lifting property with respect to all monomorphisms. By Lemme 3 of \cite{ensprofin} this implies that the maps in $Q$-fib are simplicial homotopy equivalences and hence weak equivalences.\\
3. We have seen that every $P$-projective map is a $Q$-projective map. It remains to show $P$-proj $\subseteq W$. So let $f:X\to Y$ be in $P$-proj. The left lifting property with respect to $K(S,0) \to \ast$ for any finite set $S$, shows that the map $H^0(f;S)$ is surjective and the left lifting property with respect to all maps $K(S,0)\to K(S,0)\times K(S,0)$ shows that any two liftings $Y\to K(S,0)$ agree. Thus $H^0(f;S)$ is an isomorphism. We deduce that $\pi_0(f)$ is a bijection.\\ 
So from now on we can assume that $X$ and $Y$ are connected. Since $f$ is in $P$-proj, we deduce as in the proof of Lemma 2.8 of \cite{profinhom} that the maps $f^{\ast}:Z^1(Y;\Gamma)\to Z^1(X;\Gamma)$ and $C^0(Y;\Gamma)\to C^0(X;\Gamma)\times_{Z^{1}(X;\Gamma)}Z^{1}(Y;\Gamma)$ are surjective. The latter implies that if, for an element $\beta \in Z^1(Y,\Gamma)$, $f^{\ast}(\beta)$ is a boundary, then $\beta$ is itself a boundary. Hence $f$ induces an isomorphism $f^1:H^1(Y;\Gamma) \cong H^1(X;\Gamma)$.\\
Let $\Mh$ be a local coefficient system of finite abelian groups on $\Pi Y$, the profinite fundamental groupoid of $Y$. Since $X$ and $Y$ are connected, after choosing a vertex $y\in Y_0$ in the image of $f$ and a vertex $x\in X_0$ such that $f(x)=y$, there are equivalences of groupoids $\Pi X\simeq \pi_1(X,x)$ and $\Pi Y \simeq \pi_1(Y,y)$ compatible with the morphisms induced by $f$, cf. \cite{gj}, VI, proof of Lemma 3.9. We set $\pi_X:=\pi_1(X,x)$ and $\pi_Y:=\pi_1(Y,y)$. Under these identifications, the local system $\Mh$ corresponds to a finite abelian $\pi_Y$-module $M$. Since the action of $\pi_Y$ is continuous on $M$, it factors through a finite quotient of $\pi_Y$ which is isomorphic to some $\Gamma \in \Gh$. So we may consider $M$ as a $\Gamma$-module. Being an element in $P$-proj, $f$ has the left lifting property with respect to the maps $L^{\Gamma}(M,n)\to K^{\Gamma}(M,n+1)$ and $K^{\Gamma}(M,n) \to B\Gamma$ in $\hSh/B\Gamma$. By adjunction (\ref{adjoint}), this shows that the maps
\begin{equation}\label{fZC}
\begin{array}{c}
f^{\ast}:Z_{\Gamma}^n(E\Gamma \times_{B\Gamma} Y;M)\to Z_{\Gamma}^n(E\Gamma \times_{B\Gamma} X;M)~\mathrm{and}\\
C_{\Gamma}^n(E\Gamma \times_{B\Gamma} Y;M)\to C_{\Gamma}^n(E\Gamma \times_{B\Gamma} X;M)\times_{Z_{\Gamma}^{n+1}(E\Gamma \times_{B\Gamma} X;M)}Z_{\Gamma}^{n+1}(E\Gamma \times_{B\Gamma} Y;M) \end{array}
\end{equation}
are surjective. By the definition of cohomology with local coefficients, this shows that $f$ also induces an isomorphism $H^n(f;\Mh)$ for every $n\geq 0$ and every finite abelian coefficient system $\Mh$ over $Y$. By Proposition 2.11 of \cite{profinhom}, this implies that $f$ is a weak equivalence.\\
4. The remaining point is to show that $W\cap Q$-proj $\subseteq P$-proj. So let $f:X\to Y$ be a map in $Q$-proj that is also weak equivalence. We deduce on the one hand that $f^*:C^n_{\Gamma}(Y;M) \to C^n_{\Gamma}(X;M)$ is surjective for all $n\geq 0$ and all finite abelian $\Gamma$-modules $M$ in $\Gh$. Moreover, since $f$ is a weak equivalence, this implies that the induced maps in (\ref{fZC}) are surjective for all such $n$ and $M$. The adjunction (\ref{adjoint}) then yields the desired lifting property of $f$ with respect to all maps $L^{\Gamma}(M,n)\to K^{\Gamma}(M,n+1)$ and $K^{\Gamma}(M,n) \to \ast$ in $\hSh$. The left lifting property with respect to the remaining maps in $P$ follows by an analogous argument, see also the proof of Lemma 2.8 of \cite{profinhom}.\\ 
This shows that we have found a fibrantly generated model structure on $\hSh$. Since the maps in $Q$-proj include the monomorphisms, every object in $\hSh$ is cofibrant which implies that the model structure is left proper. That the cofibrations are exactly the monomorphisms in $\hShg$ has been shown in Lemma 2.13 of \cite{profinhom}.
\end{proof}
We consider the category $\Sh$ of simplicial sets with the model structure of \cite{homalg} and denote its homotopy category by $\Hh$. For the proof of the following proposition, we refer again to \cite{profinhom}.
\begin{prop}\label{adjcompletion}
1. The level-wise completion functor $\compl: \Sh \to \hSh$ preserves weak equivalences and cofibrations.\\
2. The forgetful functor $|\cdot|:\hSh \to \Sh$ preserves fibrations and weak equivalences between fibrant objects.\\
3. The induced completion functor $\compl: \Hh \to \hHh$ and the right derived functor $R|\cdot|:\hHh \to \Hh$ form a pair of adjoint functors.
\end{prop}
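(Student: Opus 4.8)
\emph{Proof sketch.} The plan is to prove the three assertions in order: part~1 carries the only non-formal content, and parts~2 and~3 will then drop out of the standard formalism of Quillen adjunctions.

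For part~1, recall that the cofibrations of $\Sh$ are the monomorphisms and those of $\hSh$ the level-wise monomorphisms. Since $\compl\colon\Sh\to\hSh$ is the degreewise application of the left adjoint $\compl\colon\Eh\to\hEh$ of the forgetful functor $\hEh\to\Eh$, I would first check that this left adjoint sends injections of sets to monomorphisms of profinite sets: here $\compl(S)$ is the free profinite set on $S$ (the Stone--\v{C}ech compactification of the discrete set $S$), so for $A\subseteq B$ the map $\compl(A)\to\compl(B)$ is a closed embedding, hence a monomorphism. Thus $\compl$ preserves cofibrations. For weak equivalences, let $f\colon X\to Y$ be a weak equivalence of simplicial sets. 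Being a left adjoint, $\compl$ commutes with the coequalizer defining $\pi_0$, so $\pi_0(\hat X)=\widehat{\pi_0 X}$ and $\pi_0(\hat f)$ is an isomorphism of profinite sets. After the usual reduction to connected spaces and a choice of basepoint in the image of $f$, Proposition~\ref{profinitecompletion} gives $\pi_1(\hat X,\hat x)\cong\widehat{\pi_1(X,x)}$ compatibly with $f$, so $\pi_1(\hat f)$ is an isomorphism of profinite groups. Finally, a finite abelian local coefficient system on $\hat Y$ factors through a finite quotient of $\pi_1(\hat Y)=\widehat{\pi_1 Y}$ and hence is the same datum as a finite abelian local system $\Mh$ on $Y$; the cochain complexes computing $H^{\ast}(\hat Y;\Mh)$ and $H^{\ast}(Y;\Mh)$ are canonically identified, since continuous maps from $\hat{Y}_n$ into a finite set are exactly the maps out of $Y_n$ — this is the local-coefficient version of the comparison $H^{\ast}(\hat Z;\pi)\cong H^{\ast}(Z;\pi)$ for finite $\pi$ recalled above. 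Hence $\hat f^{\ast}$ on cohomology with finite local coefficients is identified with $f^{\ast}$, an isomorphism because $f$ is a weak homotopy equivalence, and $\hat f$ is a weak equivalence in $\hSh$.

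Parts~2 and~3 are then formal. By part~1, $\compl$ preserves cofibrations and weak equivalences, hence trivial cofibrations, so $(\compl,|\cdot|)$ is a Quillen adjunction. The right adjoint in a Quillen pair preserves fibrations and trivial fibrations — lifting problems transpose across the adjunction, and in both $\Sh$ and $\hSh$ a fibration is a map with the right lifting property against all trivial cofibrations — which is the fibration clause of part~2; and a right Quillen functor sends weak equivalences between fibrant objects to weak equivalences by the dual of Ken Brown's lemma, which is the remaining clause. For part~3, a Quillen adjunction induces an adjunction between total derived functors on the homotopy categories; since every object of $\Sh$ is cofibrant and $\compl$ preserves all weak equivalences, the left derived functor is just the functor $\compl\colon\Hh\to\hHh$ induced by $\compl$, and its right adjoint is $R|\cdot|$, computed by applying $|\cdot|$ after a fibrant replacement in $\hSh$.

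The main obstacle is the weak-equivalence clause of part~1, but it has been reduced to facts that are available: that completion commutes with $\pi_0$ (immediate from adjointness), that it commutes with $\pi_1$ (Proposition~\ref{profinitecompletion}), and that cohomology with finite twisted coefficients is insensitive to completion. The points requiring care are that the cohomology comparison is set up correctly with local coefficients over the profinite fundamental groupoid, that the connectivity and basepoint reductions go through as in the proof of Theorem~\ref{modelstructure}, and that $\compl$ genuinely preserves monomorphisms; the rest is routine. For full details we refer to \cite{profinhom}.
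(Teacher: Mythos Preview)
Your proof is correct and follows the natural route. The paper itself gives no proof of this proposition at all: it simply says ``For the proof of the following proposition, we refer again to \cite{profinhom}.'' Your sketch supplies exactly the argument one expects to find there --- preservation of monomorphisms by Stone--\v{C}ech completion, the identification of $\pi_0$ and $\pi_1$ after completion via adjointness and Proposition~\ref{profinitecompletion}, the comparison of cohomology with finite local coefficients via the adjunction $\Hom_{\hEh}(\hat{Y}_n,M)\cong\Hom_{\Eh}(Y_n,M)$ for finite $M$, and then the formal deduction of parts~2 and~3 from the Quillen adjunction axioms and Ken Brown's lemma --- so there is nothing to contrast. The one place you rightly flag as needing care, the local-coefficient cohomology comparison, is indeed the only nontrivial point; your reduction (a finite local system on $\hat Y$ factors through a finite quotient of $\widehat{\pi_1 Y}$, hence comes from $Y$) is the correct one, and the identification of cochain complexes can be made precise either via the description $\hom_{\Pi X}(\tilde X_\ast,\Mh)$ or, equivalently, via the representing objects $K^\Gamma(M,n)$ used in the proof of Theorem~\ref{modelstructure}.
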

\begin{defn}\label{defhomotopygroups}
Let $X$ be a pointed profinite space and let $RX$ be a fibrant replacement of $X$ in the above model structure on $\hSh$. Then we define the {\em $n$th profinite homotopy group of $X$} for $n \geq 2$ to be the profinite group
$$\pi_n(X):=\pi_0(\Omega^n(RX)).$$
\end{defn}
One should note, that to be a fibration in $\hSh$ is a stronger condition than in $\Sh$.  The profinite structure of the $\pi_nX$, would not be obtained by taking homotopy groups for $|X| \in \Sh$. For example, the fundamental group of the simplicial finite set $S^1$ as an object in $\hSh$ is equal to $\hat{\Z}$.
\begin{remark}
Morel \cite{ensprofin} proved that there is a model structure on $\hSh$ for each prime number $p$ in which the weak equivalences are maps that induce isomorphisms on $\Z/p$-cohomology. The fibrant replacement functor $R^p$ yields a rigid version of Bousfield-Kan $\Z/p$-completion. The homotopy groups for this structure are pro-$p$-groups being defined in the same way as above using $R^p$ instead of $R$. 
\end{remark}
\subsection{Profinite $G$-spaces}
Let $G$ be a fixed profinite group. Let $X$ be a profinite set on which $G$ acts continuously, i.e. there is a continuous map $\mu:G \times X \to X$ satisfying the usual axioms of group operation. In this situation we say that $X$ is a profinite $G$-set.\\
If $X$ is a profinite space and $G$ acts continuously on each $X_n$ such that the action is compatible with the structure maps, then we call $X$ a profinite $G$-space. We denote by $\hShg$ the category of profinite $G$-spaces with level-wise continuous $G$-equivariant morphisms. For an open and hence closed normal subgroup $U$ of $G$, let $X_U$ be the quotient space under the action by $U$, i.e. the quotient $X/\sim$ with $x \sim y$ in $X$ if both are in the same orbit under $U$. The following lemma is the analogue of the characterization of discrete spaces with a profinite group action.  
\begin{lemma}\label{actionlemma}
Let $G$ be a profinite group and $X$ a profinite space with a $G$-action. Then $X$ is a profinite $G$-space if and only if the canonical map $\phi:X \to \lim_U X_U$ is an isomorphism, where $U$ runs through the open normal subgroups of $G$. 
\end{lemma}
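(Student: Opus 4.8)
# Proof Proposal for Lemma \ref{actionlemma}

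\textbf{Overall strategy.} The statement is an ``if and only if'', so I would prove the two implications separately. The key technical fact that drives both directions is the structure of profinite spaces recalled earlier: every profinite space $X$ is canonically the limit $X \cong \lim_{R \in \Rh(X)} X/R$ over its simplicial open equivalence relations, with each quotient $X/R$ a simplicial finite set. I would also exploit the analogous fact for profinite $G$-sets in each simplicial degree: a profinite set $X_n$ with a continuous $G$-action is the limit of its finite discrete quotients, and continuity of the action is equivalent to saying that the $G$-action on each finite discrete quotient factors through a finite quotient of $G$, i.e. that $X_n = \lim_U (X_n)_U$ where $U$ ranges over open normal subgroups of $G$ and $(X_n)_U = X_n/U$.

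\textbf{The easy direction.} Suppose $\phi: X \to \lim_U X_U$ is an isomorphism. Each $X_U$ is a profinite space on which the finite group $G/U$ acts; in particular $G$ acts continuously on each $(X_U)_n$ (the action factors through the finite group $G/U$, which acts continuously on the profinite set $(X_U)_n$ since it is a finite group). The structure maps of $X_U$ are $G$-equivariant by construction of the quotient. Passing to the limit over $U$, the action $G \times X_n \to X_n$ is the limit of the continuous maps $G \times (X_U)_n \to (X_U)_n$, hence continuous, and compatibility with face and degeneracy maps is inherited from each level. So $X$ is a profinite $G$-space. This direction should be essentially formal.

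\textbf{The substantive direction.} Suppose $X$ is a profinite $G$-space; I must show $\phi$ is an isomorphism. The map $\phi$ is always a monomorphism (in each degree, $\phi_n: X_n \to \lim_U (X_n)_U$ is the canonical map to the limit of quotients, which separates points because the $U$-orbits are closed and their intersection over all $U$ is a single point — this uses that $G$ is profinite, so $\bigcap_U U = \{1\}$ acts trivially, combined with compactness). For surjectivity the point is to check that $X_n = \lim_U (X_n)_U$ in $\hEh$ for each $n$. Since $X_n$ is a profinite set with a continuous $G$-action, write $X_n = \lim_j F_j$ as a cofiltered limit of finite discrete sets; continuity of $\mu: G \times X_n \to X_n$ together with compactness of $G \times X_n$ forces, for each $j$, the composite $G \times X_n \to X_n \to F_j$ to factor through $(G/U_j) \times F_{j'}$ for some open normal $U_j \trianglelefteq G$ and some $j' \geq j$. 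This means the $G$-action descends to each finite quotient appearing cofinally, so $X_n$ is the limit of finite $G$-sets each with action through a finite quotient, whence $X_n \cong \lim_U (X_n)_U$. Taking this isomorphism in every simplicial degree and noting it is natural in the simplicial structure maps (which are $G$-equivariant), we conclude $\phi: X \to \lim_U X_U$ is a levelwise isomorphism of profinite spaces, hence an isomorphism in $\hShg$.

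\textbf{Main obstacle.} The crux is the surjectivity/limit statement in a single degree: showing that continuity of the $G$-action on the profinite set $X_n$ forces the action to be ``locally finite'', i.e. to factor through finite quotients of $G$ on a cofinal system of finite quotients of $X_n$. This is the standard compactness argument for continuous actions on profinite sets (using that $G \times X_n$ is compact Hausdorff and the target finite set is discrete), but it must be set up carefully so that the resulting finite $G$-set quotients assemble, over all open normal $U$, into the system $\{X_U\}$ and so that the identification is compatible with the simplicial structure. Everything else is bookkeeping: checking $\phi$ is a monomorphism, and verifying the equivariance and naturality needed to upgrade the levelwise isomorphisms to an isomorphism of profinite $G$-spaces.
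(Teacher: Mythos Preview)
The paper does not actually supply a proof of this lemma: it is stated as ``the analogue of the characterization of discrete spaces with a profinite group action'' and then used without further argument. Your proposal is the standard proof of this well-known fact (reduce to a single simplicial degree, then use compactness of $G\times X_n$ to show a continuous action on a profinite set factors through finite quotients of both $G$ and $X_n$ cofinally), and it is correct. One small point worth tightening: in your surjectivity step you should make explicit that the finite discrete $G$-quotients $F_j$ of $X_n$ on which the action factors through $G/U_j$ form a \emph{cofinal} system among all finite quotients of $X_n$, so that $\lim_U (X_n)_U$ maps back onto $X_n=\lim_j F_j$; this is what your compactness argument gives, but the identification of $\lim_U (X_n)_U$ with this limit deserves one sentence.
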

\begin{remark}
Let us quickly sketch the construction of colimits in $\hSh$ and $\hShg$. So let $\{X_i\}_{i \in I}$ be a diagram of profinite spaces. Let $X$ be the colimit of the underlying diagram of spaces, i.e. $X:=\colim_i |X_i|$, and let $\varphi_i:X_i \to X$ be the canonical maps in $\Sh$. We define a set $\Rh$ of equivalence relations on $X$, which are simplicial subsets of $X\times X$, to be the set of simplicial equivalence relations $R$ on $X$ such that \\
- $X_n/R_n$ is finite in each degree $n$, i.e. $X/R$ is a simplicial finite set,\\
- $\varphi_i^{-1}(R)$ is open in each $X_i\times X_i$ for all $i$, i.e. $\varphi_i^{-1}(R_n)$ is an open subset in each $X_{i,n}\times X_{i,n}$.\\
Then $\Rh$ is filtered from below and we define the colimit of the diagram in $\hSh$ to be the completion of $X$ with respect to $\Rh$, i.e. $\Xh:=\lim_{R\in \Rh} X/R$ in $\hSh$.
It is equipped with a canonical map $X \stackrel{\iota}{\to} \Xh$ which sends each $x\in X$ to the sequence of its equivalence classes $[x]_R \in X/R$. The image of $\iota $ is dense in $\Xh$. There are canonical maps $\phi_i:X_i \stackrel{\varphi_i}{\to} X \stackrel{\iota}{\to} \Xh$ in $\hSh$ that provide $\Xh$ with the universal property of a colimit in $\hSh$.\\
If $\{X_i\}_{i \in I}$ is a diagram of profinite $G$-spaces, then we modify $\Rh$ to $\Rh_G$ by the additional condition that every $R\in \Rh_G$ is in addition a $G$-invariant subspace of $X\times X$. Then we define the colimit of the diagram to be the completion of the underlying colimit with respect to $\Rh_G$.
\end{remark}
For $X$ and $Y$ in $\hSh$, the simplicial mapping space $\hom(X, Y)$  is defined in degree $n$ as the set of continuous maps $\Hom_{\hSh}(X \times \Delta[n], Y)$. If $G$ is a finite discrete group, considered as a constant simplicial profinite set, $\hom(G, Y)$ has a natural profinite structure induced by the profinite structure on $Y$. In order to show that $\hShg$ has the structure of a model category, we would like to use a right adjoint functor to the forgetful functor $\hShg \to \hSh$. But the problem is, that if $G$ is an arbitrary profinite group, the natural candidate for the right adjoint $\hom(G, Y)$ does not have to be a profinite space. Hence the usual adjoint functor argument using $\hom(G,-)$ may not be used to show that there is a model structure on $\hShg$. Initially, this forced us to consider an intermediate structure as in \cite{goerss} and then deduce Theorem \ref{Gmodelunstable} below via Bousfield localization. In this intermediate structure the cofibrations in $\hShg$ remained the monomorphisms. But it turns out that, in contrast to our initial belief, there is no model structure on $\hShg$ for which the cofibrations remain the monomorphisms and weak equivalences the maps that are weak equivalences in $\hSh$. But the referee suggested to prove Theorem \ref{Gmodelunstable} directly as we will do now. We are very grateful for the encouragement of the referee to look for a direct argument.\\
We want to show that for an arbitrary fixed profinite group $G$, the category $\hShg$ has a fibrantly generated model structure in which the weak equivalences are the maps whose underlying maps in $\hSh$ are weak equivalences. Therefore, we will define as before generating sets of fibrations and trivial fibrations. We modify the sets $\Th$, $\Gh$ and $\Lh$ defined above by allowing $G$-actions on their elements as follows. For every natural number $n\geq 0$ we consider the discrete sets $\{0,1, \ldots, n-1\}$ with a continuous $G$-action. As these sets are finite, each of them has only finitely many automorphisms. Hence this collection of $G$-sets forms a set which we denote by $\Thg$. Furthermore, we consider the finite discrete groups with underlying sets $\{0,1,\ldots,n-1\}$ as above with a continuous $G$-action. Again, as there are only finitely many relations and $G$-actions, this collection of finite $G$-groups forms a set which we denote by $\Ghg$. Finally, we denote by $\Lhg$ the collection of finite abelian groups $M \in \Ghg$ with an action of a finite group $\Gamma \in \Ghg$ which is compatible with the $G$-actions. Since $\Ghg$ is a set, it follows that $\Lhg$ forms a set as well.\\
Now let $P_G$ and $Q_G$ be the following two sets of morphisms:
$$\begin{array}{lll}
P_G & \mathrm{consisting~of} & E\Gamma \to B\Gamma, B\Gamma \to \ast,~ L^{\Gamma}(M,n)\to K^{\Gamma}(M,n+1),\\
   &   &  K^{\Gamma}(M,n)\to B\Gamma,~K(S,0)\to K(S,0)\times K(S,0)\\
   &  & K(S,0) \to \ast~ \mathrm{for~every~finite~set}~S\in \Thg,\\
    &  & \mathrm{every~abelian}~M\in \Lhg,~\Gamma \in \Ghg, ~\mathrm{and~every}~n\geq 0;\\
Q_G & \mathrm{consisting~of} & E\Gamma \to \ast,~ L^{\Gamma}(M,n) \to B\Gamma~ \mathrm{for~every}~\Gamma \in \Ghg,\\
& &  \mathrm{every}~M\in \Lhg ~\mathrm{and~every}~ n\geq 0. 
\end{array}$$
We call a morphism in $\hShg$ a weak equivalence (respectively fibration) if it is a weak equivalence (respectively fibration) in $\hSh$; and we call it a cofibration if it has the right lifting property with respect to all trivial fibrations in $\hShg$.
\begin{theorem}\label{Gmodelunstable}
These classes of maps define the structure of a left proper fibrantly generated simplicial model category on the category of profinite $G$-spaces with $P_G$ as a set of generating fibrations and $Q_G$ as a set of generating trivial fibrations. We denote its homotopy category by $\hHhg$. 
\end{theorem}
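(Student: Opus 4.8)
The plan is to imitate the proof of Theorem~\ref{modelstructure} line by line, checking the four hypotheses of the dual of Kan's recognition theorem \cite[Thm.~11.3.1]{hirsch} for the weak equivalences $W$ of $\hSh$, with the maps having the right lifting property against all $P_G$-projective maps as fibrations and the $Q_G$-projective maps as cofibrations. Since $W$ in $\hShg$ is created by the forgetful functor $\hShg\to\hSh$, the $2$-out-of-$3$ property and closure under retracts come for free. For the cosmallness hypothesis I would argue exactly as in step~1 there: the codomains $\ast$, $B\Gamma$, $K^{\Gamma}(M,n)$, $K(S,0)\times K(S,0)$ and $K(S,0)$ of the maps in $P_G$, $Q_G$ are cosmall relative to $P_G$-cocell, resp.\ $Q_G$-cocell. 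The adjunction \eqref{adjoint} persists after adjoining the ambient continuous $G$-action, so $\Hom_{\hShg}(Y,K^{\Gamma}(M,n))$ is a union, over the maps $Y\to B\Gamma$, of groups of equivariant cocycles that are computed in the category $\hEh_{\Gamma,G}$ of profinite sets with compatible $\Gamma$- and $G$-action; this category is equivalent to a pro-category of finite sets, in which every object is cosmall by \cite[Cor.~3.5]{prospectra}. Commuting a cofiltered limit past these cocycle groups, together with the cosmallness of $B\Gamma$ from \cite[Thm.~2.11]{profinhom}, gives the hypothesis.

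Every map in $Q_G$ is a composite of pullbacks of maps in $P_G$ (for instance $E\Gamma\to\ast$ factors as $E\Gamma\to B\Gamma\to\ast$ and $L^{\Gamma}(M,n)\to B\Gamma$ as $L^{\Gamma}(M,n)\to K^{\Gamma}(M,n+1)\to B\Gamma$), hence is a relative $P_G$-cocell complex; by the dual of \cite[Lemma~2.1.10]{hoveybook} this forces $P_G$-proj $\subseteq Q_G$-proj and therefore $Q_G$-fib $\subseteq P_G$-fib. For the second half of hypothesis~2 I would establish the equivariant analogue of \cite[Lemma~2.8]{profinhom}: every levelwise monomorphism of profinite $G$-spaces lies in $Q_G$-proj, the successive extensions being made $G$-equivariantly (extending, say, a continuous $G$-map $A_0\to\Gamma$ over $A_0\hookrightarrow B_0$ by choosing a $G$-invariant clopen partition of $B_0$ refining the one pulled back from $\Gamma$, and similarly at the cochain level). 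Then $Q_G$-fibrations have the right lifting property against all monomorphisms, so the $G$-equivariant form of \cite[Lemme~3]{ensprofin}, with cylinder $X\times\Delta[1]$ and $G$ acting through $X$ only, shows they are $G$-equivariant simplicial homotopy equivalences and in particular weak equivalences. For hypothesis~4, a map $f$ in $W\cap Q_G$-proj is, by \eqref{adjoint}, surjective on all groups of $\Gamma$- and $G$-equivariant cochains $C^{n}_{\Gamma}(E\Gamma\times_{B\Gamma}Y;M)$; being moreover a weak equivalence, the diagram chase from the proof of \cite[Lemma~2.8]{profinhom} upgrades this to surjectivity onto the cocycle groups $Z^{n}_{\Gamma}$ and onto the fibre products in \eqref{fZC}, which by \eqref{adjoint} is exactly the lifting property of $f$ against the maps of $P_G$; the generators $E\Gamma\to B\Gamma$, $B\Gamma\to\ast$ and $K(S,0)\to K(S,0)\times K(S,0)$, $K(S,0)\to\ast$ are handled the same way.

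The remaining hypothesis, $P_G$-proj $\subseteq W$, is where the genuine difficulty lies, and is the step I expect to be the main obstacle. Let $f\colon X\to Y$ be $P_G$-projective. Lifting against $K(S,0)\to\ast$ and $K(S,0)\to K(S,0)\times K(S,0)$ for \emph{every} finite $G$-set $S\in\Thg$ forces $\pi_0(f)$ to be an isomorphism of profinite $G$-sets, hence $\pi_0(|f|)$ is bijective; one then reduces to connected $X,Y$ and, running step~3 of the proof of Theorem~\ref{modelstructure} with equivariant cochains, obtains that $f^{*}$ is surjective on the equivariant cocycle groups $Z^{1}(-;\Gamma)$, $Z^{n}_{\Gamma}(-;M)$ and on the relevant fibre products, for all $\Gamma\in\Ghg$ and all compatible $M\in\Lhg$. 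The obstacle --- and the reason the generating sets had to be enlarged from $\Th,\Gh,\Lh$ to $\Thg,\Ghg,\Lhg$ by admitting arbitrary continuous $G$-actions --- is to pass from these \emph{equivariant} assertions to the statement that $|f|$ is a weak equivalence in $\hSh$, i.e.\ induces isomorphisms on $H^{*}(-;\Mh')$ for \emph{all} finite local coefficient systems $\Mh'$ on $|Y|$, most of which carry no $G$-action. I would handle this by a Shapiro-type argument: by continuity of the action the $G$-orbit of a finite $\Mh'$ is finite, say indexed by $G/U$ for an open $U\le G$, so the direct sum $\bigoplus_{gU}g^{*}\Mh'$ is a $(G/U)$-equivariant finite local system, hence is classified by a $G$-equivariant map into some $K^{\Gamma}(M,n+1)$ with $\Gamma\in\Ghg$, $M\in\Lhg$; since each $g$ acts on $|Y|$ by an isomorphism, $H^{*}(|f|;\bigoplus_{gU}g^{*}\Mh')\cong\bigoplus_{gU}H^{*}(|f|;\Mh')$, so the equivariant lifting properties of $f$ already force $H^{*}(|f|;\Mh')$ to be an isomorphism; the same orbit device applied to a finite group $\Gamma_0$ (replacing it by $\Gamma_0^{G/U}$) settles $H^{1}(|f|;\Gamma_0)$. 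Then \cite[Prop.~2.11]{profinhom} identifies $|f|$, hence $f$, as a weak equivalence.

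With all four hypotheses verified, $\hShg$ carries a fibrantly generated model structure with $P_G$ and $Q_G$ as generating fibrations and trivial fibrations. Left properness I would deduce from that of $\hSh$ by a direct verification, using that the forgetful functor creates weak equivalences and preserves the finite colimits in question (by the cofinality of $\Rh_G$ in $\Rh$ implicit in the construction of colimits in $\hShg$), so that a pushout of a weak equivalence along a cofibration in $\hShg$ maps to the corresponding pushout in $\hSh$, where left properness holds; the mild extra point, that cofibrations in $\hShg$ need not be monomorphisms, is disposed of on the level of pushouts along generating maps. For the simplicial structure one enriches over $\Sh$ via $\homg(X,Y)_n=\Hom_{\hShg}(X\times\Delta[n],Y)$, with tensor $X\otimes K:=\colim_{\Delta[n]\to K}(X\times\Delta[n])$ computed in $\hShg$ ($G$ acting through $X$) and the corresponding cotensor; the pushout--product axiom then follows from the lifting properties already established together with its counterpart in $\Sh$. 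I expect essentially all the real work to be concentrated in the orbit/Shapiro passage from equivariant to ordinary continuous cohomology above, which is precisely the point where the enlarged generating sets are indispensable and which replaces the adjoint-functor argument that fails here.
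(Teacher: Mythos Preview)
Your overall strategy matches the paper's, and steps 1 and 4 are close enough. The substantive divergences are in steps 2 and 3.

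In step 2 you assert that every levelwise monomorphism of profinite $G$-spaces lies in $Q_G$-proj, and then conclude via \cite[Lemme~3]{ensprofin}. But this assertion is precisely what the paper says is false: see the remark immediately preceding the theorem (``there is no model structure on $\hShg$ for which the cofibrations remain the monomorphisms'') and the Corollary right after the theorem, which characterizes $Q_G$-proj as the monomorphisms with \emph{free} $G$-action on the levelwise complement. Your sketched equivariant extension (``choosing a $G$-invariant clopen partition of $B_0$'') does not do the job: a $G$-invariant clopen piece carries a $G$-action with possibly nontrivial stabilizers, and there is no reason a $G$-equivariant extension over such a piece into a $G$-group $\Gamma$ exists compatibly with a prescribed value on $A_0$. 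The paper's argument for this step is entirely different and much shorter: forgetting the $G$-action, every map in $Q_G$ is already a trivial fibration in $\hSh$; since trivial fibrations are characterized by a lifting property they are closed under limits and pullbacks, so every map in $Q_G$-cocell is a trivial fibration in $\hSh$; then \cite[Thm.~2.1.19]{hoveybook} identifies $Q_G$-fib with retracts of $Q_G$-cocell, hence with weak equivalences (and $P_G$-fibrations).

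For step 3 your orbit/Shapiro idea is morally right but the paper organizes it more cleanly and avoids the subtlety you flag about putting a $G$-action on the finite quotient of $\pi_1(Y)$ through which a non-equivariant local system factors. For each open normal $U\lhd G$ and each $T\in\Th$, $H\in\Gh$, $N\in\Lh$ the paper sets $S=T^{G/U}$, $\Gamma=H^{G/U}$, $M=N^{G/U}$ with $G/U$ permuting the factors; the point is the coinduction bijection $\Hom_{\hEhg}(Z,S)\cong\Hom_{\hEh}(Z/U,T)$ (and its analogues for $\Gamma$ and $M$), which converts the $P_G$-lifting properties of $f$ into the $P$-lifting properties of $f/U$ in $\hSh$. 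Hence $f/U$ is a weak equivalence for every $U$, and since $f=\lim_U f/U$ by Lemma~\ref{actionlemma} one concludes by \cite[Prop.~2.14]{profinhom}. The same device is reused in the paper's step 4 to produce the non-equivariant lift that is then compared with the $G$-equivariant cochain lift.
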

\begin{proof}
We will denote by $C^n_{\Gamma,G}(E\Gamma \times Y;M):=\Hom_{\hEh_{\Gamma,G}}(\Gamma^{n+1}\times Y_n, M)$ the $G$-$\Gamma$-equivariant cochains for $Y\in \hShg$, $\Gamma \in \Ghg$ and a $G$-$\Gamma$-group $M$; and similarly, $Z^n_G(Y;M)$ will denote the subgroup of $G$-equivariant cocycles. The strategy for the proof consists again in checking the four conditions of the dual of Kan's Theorem 11.3.1 in \cite{hirsch}. It is clear that the weak equivalences satisfy the 2-out-of-3 property and are closed under retracts.\\ 
1. Again the nontrivial cases to check are that $B\Gamma$, $K^{\Gamma}(M,n)$ and $K(S,0)\times K(S,0)$ are cosmall relative to $P_G$-cocell for $\Gamma\in \Ghg$, $M\in \Lhg$ and $S\in \Thg$. But this follows from a similar argument as in the proof of Theorem \ref{modelstructure} using the fact that $\hEh_{\Gamma,G}$ is equivalent to the pro-category of finite $G$-$\Gamma$-sets. \\ 
2. We have to show that every $Q_G$-fibration is both a $P_G$-fibration and a weak equivalence. We know from the proof of Theorem \ref{modelstructure} that the maps in $Q_G$ are trivial fibrations in $\hSh$ after forgetting the $G$-action. Since trivial fibrations are characterized by a lifting property, this implies that the maps in $Q_G$-cocell, which are limits of pullbacks of maps in $Q_G$, are also trivial fibrations. By \cite{hoveybook}, Theorem 2.1.19, this shows that the $Q_G$-fibrations are weak equivalences and $P_G$-fibrations.\\
3. The same argument as for Theorem \ref{modelstructure} shows that $P_G$-proj $\subset$ $Q_G$-proj. It remains to show that every $f:X\to Y$ in $P_G$-proj is a weak equivalence. Let $U \subset G$ be an open normal subgroup of $G$. For a finite set $T\in \Th$, a finite group $H\in \Gh$ and an abelian finite $H$-group $N\in \Lh$ we set  $S:=T^{G/U}$, $\Gamma:=H^{G/U}$ and $M:=N^{G/U}$ with $G/U$-action given by permuting the factors; thus we get $S\in \Thg$, $\Gamma \in\Ghg$ and $M\in \Lhg$. The crucial point is that $G$-equivariant maps from a profinite $G$-space $Z$ to $S$ are in one-to-one correspondence to maps in $\hSh$ from $Z/U$ after forgetting the $G$-action to $T$, and similarly for $\Gamma$ and $M$. Hence the argument of the proof of Theorem \ref{modelstructure} applied to the special maps in $P_G$ built from such $S$, $\Gamma$ and $M$ shows that $f/U$ is a weak equivalence in $\hSh$ for every open normal $U$ in $G$. By Lemma \ref{actionlemma} we know $f=\lim_U f/U$ and by \cite{profinhom}, Proposition 2.14, this implies that $f$ is a weak equivalence.\\  
4. The last point is to show that $W\cap Q_G$-proj $\subseteq P_G$-proj. So let $f:X\to Y$ be a map in $Q_G$-proj that is also weak equivalence. We deduce on the one hand that $f^*:C^n_{\Gamma,G}(E\Gamma \times Y;M) \to C^n_{\Gamma,G}(E\Gamma \times X;M)$ is surjective for all $n\geq 0$ and all finite abelian $G$-$\Gamma$-modules $M$ in $\Gh_G$. This shows that every $G$-equivariant cocycle in $Z_{\Gamma,G}^n(E\Gamma \times_{B\Gamma} X;M)$ has a lift to $G$-equivariant cochains. The fact that $f$ is a weak equivalence implies that this lift is actually also a cocycle in $Z_{\Gamma,G}^n(E\Gamma \times_{B\Gamma} Y;M)$.\\
Moreover, if we have an element $(\alpha,\beta)$ in the fibre product 
$$C_{\Gamma,G}^n(E\Gamma \times_{B\Gamma} X;M)\times_{Z_{\Gamma,G}^{n+1}(E\Gamma \times_{B\Gamma} X;M)}Z_{\Gamma,G}^{n+1}(E\Gamma \times_{B\Gamma} Y;M)$$ 
then there is a lift $\gamma_0$ of $\alpha$ in $C_{\Gamma,G}^n(E\Gamma \times_{B\Gamma} Y;M)$. Using the same special types of  $S$, $\Gamma$ and $M$ as in the previous item, we deduce from the proof of Theorem \ref{modelstructure} that $f/U$ is a trivial cofibration in $\hSh$ and hence so is $f=\lim_U f/U$. Thus $f$ is in $P$-proj after forgetting the $G$-action and there is also a possibly non-$G$-equivariant lift $\gamma_1$ of $(\alpha,\beta)$ in $C_{\Gamma}^n(E\Gamma \times_{B\Gamma} Y;M)$. But again since $f$ is a weak equivalence, it follows easily that the difference of $\gamma_0$ and $\gamma_1$ vanishes in $Z_{\Gamma,G}^{n+1}(E\Gamma \times_{B\Gamma} Y;M)$. Hence $\gamma_0$ is a $G$-equivariant lift of $(\alpha,\beta)$. Applying the same argument for the lifting property with respect to $E\Gamma \to \ast$, we conclude that $f$ is in $P_G$-proj.\\
This shows that we have found a fibrantly generated model structure on $\hShg$. The left properness follows from the same property for $\hSh$. 
\end{proof}

\begin{cor}
A map $f:X\to Y$ is a cofibration in $\hShg$ if and only if it is a level-wise monomorphism and $G$ acts freely on each set $Y_n -f(X_n)$. In particular, an object $X$ in $\hShg$ is cofibrant if and only if $G$ acts freely on each profinite set $X_n$.
\end{cor}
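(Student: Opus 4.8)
The \textquotedblleft in particular\textquotedblright{} clause is the special case $f\colon\emptyset\to X$, so it suffices to characterise cofibrations; recall from Theorem \ref{Gmodelunstable} that these are exactly the maps with the left lifting property against all trivial fibrations, equivalently against all of $Q_G$. My plan rests on two left Quillen functors. First, $G\times(-)\colon\hSh\to\hShg$ is left adjoint to the forgetful functor, which preserves fibrations and weak equivalences by construction, so $G\times(-)$ is left Quillen and in particular each $G\times Z$ is cofibrant. Second, for an open normal $U\trianglelefteq G$ the orbit functor $(-)/U$ from $\hShg$ to profinite $G/U$-spaces is left adjoint to inflation along $G\to G/U$ (inflation leaves the underlying map unchanged, hence preserves fibrations and weak equivalences), so $(-)/U$ is left Quillen and carries cofibrations to cofibrations; and by Lemma \ref{actionlemma} every $f$ in $\hShg$ is $\lim_U f/U$.

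For the implication \textquotedblleft cofibration $\Rightarrow$ monomorphism with free complement\textquotedblright{} I would first show the underlying map is a levelwise monomorphism. For $U\trianglelefteq G$ open normal, $H\in\Gh$, $N\in\Lh$ the maps $E(H^{G/U})\to\ast$ and $L^{H^{G/U}}(N^{G/U},n)\to B(H^{G/U})$ belong to $Q_G$; via the bijection $\Hom_{\hShg}(Z,(-)^{G/U})\cong\Hom_{\hSh}(Z/U,-)$ used in the proof of Theorem \ref{Gmodelunstable}, the lifting problems for $f$ against these become the lifting problems for $f/U$ against $EH\to\ast$ and $L^H(N,n)\to BH$ in $\hSh$. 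Hence $f/U$ lies in $Q$-proj, i.e.\ is a levelwise monomorphism by Theorem \ref{modelstructure}, and $f=\lim_U f/U$ is one too. For freeness of the action on $Y_n-f(X_n)$ one may reduce to finite $G$: $f/U$ is a cofibration of $G/U$-spaces by the reduction above, one has $(Y/U)_n-(f/U)(X/U)_n=(Y_n-f(X_n))/U$, and a nontrivial stabiliser at a point of $Y_n-f(X_n)$ would, choosing $U$ avoiding the offending element, survive in this quotient. For finite $G$ one then produces a concrete non-liftable square against a map $L^\Gamma(M,n)\to B\Gamma\in Q_G$, with $\Gamma$, the module $M$ and the structure map $Y\to B\Gamma$ arranged so that $\Gamma$- and $G$-equivariance together with the fixed point obstruct extending an $n$-cochain from $E\Gamma\times_{B\Gamma}X$ to $E\Gamma\times_{B\Gamma}Y$.

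For the converse, suppose $f$ is a levelwise monomorphism with $G$ acting freely on each $Y_n-f(X_n)$; I would check the lifting property against the generating trivial fibrations $E\Gamma\to\ast$ and $L^\Gamma(M,n)\to B\Gamma$ for $\Gamma\in\Ghg$, $M\in\Lhg$. Since $\Gamma$ and the relevant actions factor through some $\bar G:=G/U$, a $G$-equivariant lifting problem is a $\bar G$-equivariant one for $f/U$, which is again a levelwise monomorphism with $\bar G$ acting freely on the complement; so one may assume $G$ finite. The case $E\Gamma\to\ast$ is the Extension Lemma: a continuous $G$-map $a\colon X_0\to\Gamma$ extends to a continuous $G$-map $Y_0\to\Gamma$. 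To prove it, extend $a$ to some continuous $c\colon Y_0\to\Gamma$ (possible because $X_0$ is closed in the profinite space $Y_0$ and $\Gamma$ is finite discrete), choose a $G$-equivariant homeomorphism $Y_0-X_0\cong G\times P$, and set $\tilde a=a$ on $X_0$ and $\tilde a(g,p):=g\cdot c(e,p)$ on $Y_0-X_0$; continuity at a point $x\in X_0$ follows because a net $(g,p_i)\to x$ forces $(e,p_i)\to g^{-1}x\in X_0$, whence $c(e,p_i)\to g^{-1}a(x)$ and $\tilde a(g,p_i)\to a(x)$. For $L^\Gamma(M,n)\to B\Gamma$, by the description of maps into $L^\Gamma(M,n)$ from the proof of Theorem \ref{modelstructure} a lift amounts to extending a $\Gamma$- and $G$-equivariant $n$-cochain from $E\Gamma\times_{B\Gamma}X$ to $E\Gamma\times_{B\Gamma}Y$ with coefficients in $M$; since $\Gamma$ acts freely on $E\Gamma$ and $G$ acts freely on $Y_n-f(X_n)$, the finite group $\Gamma\rtimes G$ acts freely on the complement in degree $n$, so the Extension Lemma (applied to $\Gamma\rtimes G$, the closed embedding $(E\Gamma\times_{B\Gamma}X)_n\subseteq(E\Gamma\times_{B\Gamma}Y)_n$, and $M$) furnishes the extension.

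The step I expect to be hardest is the freeness half of the forward implication for finite $G$, namely the explicit construction of the non-liftable square: everything else is formal, reducing to the $\hSh$-theory, Lemma \ref{actionlemma}, the two Quillen adjunctions above, and the Extension Lemma, whose only delicate point is the continuity verification (and which also uses the standard fact that a finite group acting freely on a locally compact totally disconnected space can be trivialised, so that $Y_0-X_0\cong G\times P$ indeed holds).
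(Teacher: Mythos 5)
Your overall architecture is reasonable: the identification of cofibrations with $Q_G$-projective maps, the reduction to the finite quotients $f/U$ via the coinduction adjunction $\Hom_{\hShg}(Z,(-)^{G/U})\cong\Hom_{\hSh}(Z/U,-)$, the levelwise-injectivity half of the forward implication via $f=\lim_U f/U$, and the converse direction (extend non-equivariantly, then repair equivariance over the free part of the complement) are all workable and close in spirit to the paper's argument. The genuine gap is the freeness half of the forward implication, which you defer to ``a concrete non-liftable square against a map $L^{\Gamma}(M,n)\to B\Gamma$'' for finite $G$ and never construct --- and this is exactly where the content of the statement sits. The obstacle is that for $B\Gamma$ and $L^{\Gamma}(M,n)$ to be objects of $\hShg$ the $G$-actions on $\Gamma$ and $M$ must be by group automorphisms, so the identity of $\Gamma$ and the zero of $M$ are $G$-fixed, and the constant maps at these fixed points solve the naive lifting problems no matter how badly $G$ fails to act freely on $Y_n-f(X_n)$. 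Concretely, take $X=\emptyset$ and $Y$ the constant simplicial object on $G/H$ with $H\neq 1$: the only $1$-cocycle on a discrete simplicial set is trivial, so the only map $Y\to B\Gamma$ is the constant one, and the resulting lifting problems against $E\Gamma\to\ast$ and $L^{\Gamma}(M,n)\to B\Gamma$ are all solved by the constants at $e$ and $0$. So no obstruction of the shape you describe is visible among the generating squares unless one either allows coefficient objects on which $G$ acts without fixed points or engineers genuinely nontrivial structure maps $Y\to B\Gamma$; your sketch supplies neither.

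The paper gets around this in a different way, and the two missing ideas are worth naming. First, it upgrades the left lifting property from the finite generating maps to the maps $E\Gamma\to\ast$ and $L(M,n)\to\ast$ for \emph{arbitrary profinite} $G$-groups $\Gamma$ and $G$-modules $M$, by a limit argument (this is itself the nontrivial step your explicit-obstruction plan would have to reproduce). Second, it chooses the coefficients so that the mere \emph{existence} of an equivariant extension carries information: $M$ is taken to be the free profinite abelian group on $X_n$ with the tautological injection $X_n\hookrightarrow M$ (forcing levelwise injectivity), and then $M$ is replaced by $M\times N$ with $N$ a profinite group carrying a free $G$-action on its underlying profinite set. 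The equivariant extension $Y_n\to M\times N$ shows that $Y_n-f(X_n)$ is a projective profinite $G$-set, and projective profinite $G$-sets are free (checked on the finite quotients). Enlarging the test class to profinite objects without $G$-fixed points, together with the ``projective implies free'' step, is what your proposal is missing, and I do not see how to substitute for it with a single finite-coefficient square.
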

\begin{proof}
The previous theorem implies that the cofibrations in $\hShg$ are the maps in $Q_G$-proj. We recall that a map $X\to L(M,n)$ in $\hShg$ corresponds to a map $X_n \to M$ in $\hEhg$. So if a map $f:X\to Y$ is in $Q_G$-proj, then the left lifting property with respect to the maps $L(M,n)\to \ast$ in $Q_G$ corresponds to the existence of a lift $Y_n\to M$ for any map $X_n \to M$ in $\hEhg$. And similarly, maps $X\to E\Gamma$ in $\hShg$ correspond to maps $X_0 \to \Gamma$ in $\hEhg$. By the universality of limits, this implies that $f:X\to Y$ is in $Q_G$-proj if and only if it has the left lifting property with respect to maps $E\Gamma \to \ast$ and $L(M,n)\to \ast$ for all profinite $G$-groups $\Gamma$ and all profinite $G$-modules $M$ and $n\geq 0$. Now let $f:X\to Y$ be a cofibration. For any given $n\geq 0$, we choose $M$ to be the free profinite abelian group on $X_n$ with $G$-action induced by the action of $G$ on $X_n$ and the obvious injection $X_n \to M$ which corresponds to a map $\alpha:X\to L(M,n)$ in $\hShg$. Hence if there is a lift of $\alpha$ to $Y$, $f$ must be level-wise an injection. Moreover, if $N$ denotes a profinite group with a free $G$-action, we replace $M$ by $M\times N$ and see that the action of $G$ on $Y_n - f(X_n)$ must be free, since $f$ can be extended to a map $Y_n \to M\times N$ in $\hEhg$. Here we use the fact that a projective profinite $G$-set $Z$ is in fact free, since $Z/U$ is a projective and hence free $G/U$-set for all open normal subgroups $U$ of $G$ and $Z=\lim_U Z/U$.\\
Now let $f$ be a level-wise injection with a free $G$-action on $Y_n - f(X_n)$ for every $n\geq 0$. In order to show that $f$ is a cofibration, we need to show that for an injective map $f:X\to Y$ in $\hEhg$, $G$ acting freely on $Y - f(X)$, and every map $\alpha:X\to \Gamma$ in $\hEhg$ from $X$ to a profinite $G$-group $\Gamma$, there is a lift $\beta:Y\to \Gamma$ of $\alpha$. Now, if $f$ is injective, so is $f/G$. Since profinite groups are injective objects in $\hEh$ by \cite{profinhom}, Lemma 2.7, we know that $\alpha/G$ has a lift $\beta_0:Y/G \to \Gamma/G$ for $f/G$. But since the action of $G$ on $Y-f(X)$ is free, we can lift $\beta_0$ to a $G$-equivariant map $\beta:Y \to \Gamma$ such that $\alpha=\beta \circ f$ in $\hEhg$.  
\end{proof}

We equip the category $\hSh/BG$ of profinite spaces over $BG$ with the usual model structure in which a map is a weak equivalence, cofibration or fibration if and only if it is a weak equivalence, cofibration or fibration in $\hSh$. We have frequently used adjunction (\ref{adjoint}) and the pair of adjoint functors $F:\hSh/BG \to \hShg$, $X\mapsto EG\times_{BG}X$, and $U=(-)_{hG}:\hShg \to \hSh/BG$, $Y\mapsto EG\times_G Y$. Now we can say more about this adjunction, see \cite{hoveybook} for the terminology of Quillen equivalences.
\begin{cor}
The two functors $F$ and $U$ form a pair of Quillen equivalences between the category of profinite spaces over $BG$ and the category of profinite $G$-spaces.  
\end{cor}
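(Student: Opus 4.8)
The plan is to check that $(F,U)$ is a Quillen pair and then to verify the two conditions characterising a Quillen equivalence; in fact I would prove the slightly stronger statement that the unit and the counit are already weak equivalences, see \cite{hoveybook} for the terminology.

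\textbf{Step 1: $(F,U)$ is a Quillen pair.} By (\ref{adjoint}) the functor $F(X)=EG\times_{BG}X$ is left adjoint to $U(Y)=EG\times_G Y$. For any $X\to BG$ the projection $EG\times_{BG}X\to X$ is the pullback of the principal $G$-fibration $EG\to BG$, so $F(X)$ carries a free $G$-action and is therefore cofibrant by the preceding corollary; since pullbacks preserve level-wise monomorphisms and the level-wise complement of $F(X)$ inside the free $G$-space $F(X')$ is again free, $F$ carries cofibrations in $\hSh/BG$ to cofibrations in $\hShg$. Moreover $F$ preserves weak equivalences: since $EG\to BG$ is a fibration, the square exhibiting $EG\times_{BG}X$ as a pullback is a homotopy pullback square, so a weak equivalence $X\to X'$ over $BG$ induces a weak equivalence $F(X)\to F(X')$ (this is the one point where one must invoke the pullback, resp.\ properness, results for $\hSh$ from \cite{profinhom}). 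As $F$ preserves cofibrations and all weak equivalences it is a left Quillen functor.

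\textbf{Step 2: Quillen equivalence.} Every object of $\hSh/BG$ is cofibrant, and the orbit functor $U=(-)_{hG}$ preserves weak equivalences (homotopy orbits are homotopy invariant, \cite{profinhom}), so it suffices to check that the unit $\eta_X\colon X\to U F(X)=EG\times_G(EG\times_{BG}X)$ is a weak equivalence, since $U$ carries a fibrant replacement $F(X)\to RF(X)$ to a weak equivalence. Now $P:=EG\times_{BG}X$ is a free cofibrant $G$-space with $P/G=X$, so the natural map $U F(X)=EG\times_G P\to P/G=X$ is a weak equivalence and $\eta_X$ is a section of it, whence $\eta_X$ is a weak equivalence. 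For the counit, $U(Y)$ is cofibrant, and using that $G$ acts freely on $EG$ with $EG/G=BG$ -- so that every point of $EG\times_{BG}(EG\times_G Y)$ has a unique representative whose two $EG$-coordinates coincide -- one obtains a natural $G$-equivariant isomorphism $F U(Y)=EG\times_{BG}(EG\times_G Y)\cong EG\times Y$ with $G$ acting diagonally, under which the counit $\varepsilon_Y\colon F U(Y)\to Y$ becomes the projection $EG\times Y\to Y$. Since $EG$ is fibrant and weakly contractible, $EG\to\ast$ is a trivial fibration, hence so is its pullback $EG\times Y\to Y$; in particular $\varepsilon_Y$ is a weak equivalence. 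Thus both the derived unit and the derived counit are weak equivalences, so $(F,U)$ is a Quillen equivalence between $\hSh/BG$ and $\hShg$, with homotopy categories $\Ho(\hSh/BG)$ and $\hHhg$.

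\textbf{The main obstacle.} The formal bookkeeping above is routine; the genuine input is the homotopy invariance of the constructions $EG\times_{BG}(-)$ and $(-)_{hG}$ -- equivalently, that pullback along the covering $EG\to BG$ and passage to orbits of free actions are homotopy invariant. These statements are delicate in the profinite setting because weak equivalences in $\hSh$ are detected by continuous cohomology with \emph{finite} local coefficients, and a finite local system on a $G$-cover need not descend along $EG\to BG$, so one cannot argue by a naive Serre spectral sequence; instead one must rely on the pullback and properness results established in \cite{profinhom}.
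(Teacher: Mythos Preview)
Your proof is correct and follows essentially the same approach as the paper: the key step in both is identifying the counit $FU(Y)\cong EG\times Y\to Y$ with the projection and using that $EG\to\ast$ is a trivial fibration. The paper is more economical---it checks only the counit and then invokes \cite{hoveybook}, Corollary~1.3.16, to conclude---whereas you additionally verify the unit by hand; your ``main obstacle'' discussion is thus more cautious than strictly necessary, since the counit computation alone already carries the argument.
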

\begin{proof}
We have already seen that $F$ is left adjoint to $U$. Moreover, we observe from the previous corollary that $F$ preserves cofibrations and trivial cofibrations. So $(F,U)$ forms a Quillen pair of adjoint functors. The remaining point is to observe that the map $EG\times_{BG}(EG\times_G Y)=EG\times Y\to Y$ is a weak equivalence, since $EG\to \ast$ is a trivial fibration. By \cite{hoveybook}, Corollary 1.3.16, this implies the result.
\end{proof}
\begin{defn}
Let $X$ be a profinite $G$-space and $M$ a profinite $G$-module. We define the $G$-equivariant cohomology of $X$ with coefficients in $M$ to be
$$H_G^n(X;M):=\Hom_{\hHhg}(X,K(M,n)).$$
\end{defn}
\begin{remark}\label{remZ/p}
1. Let $p$ be any prime number. The method to prove Theorem \ref{Gmodelunstable} also applies to Morel's $\Z/p$-model structure on $\hSh$ of \cite{ensprofin} and the action of a profinite group $G$. More generally, given a set $L$ of primes, one can use the proof of Theorem \ref{Gmodelunstable} to show that there is a model structure on $\hShg$ in which the weak equivalences are maps that induce isomorphisms with respect to pro-$L$-fundamental groups and continuous cohomology with local coefficient systems in finite $L$-groups. We just have to adapt the classes $P_G$ and $Q_G$ by requiring that the groups $\Gamma$ and $M$ are all finite $L$-groups.\\
2. In the proof of the theorem we have already used the following fact. Let $f:X\to Y$ be a map in $\hShg$ such that the underlying map of $f/U:X/U \to Y/U$ is a weak equivalence in $\hSh$ for every open normal subgroup $U$ of $G$. The homotopy invariance of the limit functor in $\hSh$ shown in \cite{profinhom}, Proposition 2.23, then implies that $f$ is a weak equivalence as well.
\end{remark}
\subsection{Homotopy fixed points and homotopy orbits}
We define the homotopy fixed points as usually as the function space of continuous maps coming from $EG$. 
%
\begin{defn}\label{homotopyfixedpoints} 
Let $G$ be a profinite group, let $X$ be a profinite $G$-space and let $X\mapsto RX$ be a fixed functorial fibrant replacement  in $\hShg$. We define the profinite homotopy fixed point space of $X$ to be the space of $G$-invariant continuous maps from $EG$ to $RX$:
$$X^{hG}:=\hom_{\hShg}(EG, RX).$$
\end{defn}
\begin{prop}\label{prop3.4}
Let $M$ be a profinite $G$-module. Then the homotopy groups of the simplicial set $K(M,n)^{hG}$ are equal to the continuous cohomology of $G$, i.e. for $0\leq k \leq n$ we have
$$\pi_kK(M,n)^{hG}=H^{n-k}(G;M).$$
\end{prop}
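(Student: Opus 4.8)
The plan is to identify $K(M,n)^{hG}$ explicitly with a cochain object and then read off its homotopy groups. First I would use the adjunction (\ref{adjoint}) together with the definition $K(M,n) = E G \times_{BG, f} (\text{something})$ — more precisely, since $EG \to BG$ and the classifying space $BG$ is already fibrant and $K(M,n)$ is built from the fibrant objects $L^{\Gamma}(M,n) \to K^{\Gamma}(M,n+1)$, the object $K(M,n)$ (with its given $G$-action) is already fibrant in $\hShg$, so one may take $RK(M,n) = K(M,n)$. Then by definition $K(M,n)^{hG} = \hom_{\hShg}(EG, K(M,n))$, and in simplicial degree $q$ this is $\Hom_{\hShg}(EG \times \Delta[q], K(M,n))$. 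Using the adjunction $(F,U)$ of the Quillen equivalence, or directly adjunction (\ref{adjoint}), this set is in natural bijection with $\Hom_{\hSh/BG}(\Delta[q] \times BG, \,\text{nothing needed})$ — more simply, a $G$-equivariant map $EG \times \Delta[q] \to K(M,n)$ is the same datum as an $n$-cocycle in the $G$-equivariant cochain complex $C^{\ast}_G(EG \times \Delta[q]; M)$, i.e. a continuous $G$-equivariant map $G \times \Delta[q]_n \to M$ satisfying the cocycle condition. Since $\Delta[q]$ is contractible, I expect $C^{\ast}_G(EG \times \Delta[q]; M)$ to compute $H^{\ast}(G;M)$ for every fixed $q$.

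Next I would make the simplicial structure precise. The assignment $q \mapsto Z^n_G(EG \times \Delta[q]; M)$ is a simplicial set, and the claim amounts to computing its homotopy groups. The clean way is to recognize this simplicial set as $\map_{\hSh}(\ast, \text{Tot or nerve-type construction})$; concretely, by Dold–Kan and the standard identification of $K(M,n)^{hG}$ with $\holim_{BG} K(M,n)$ (which holds because $EG \to BG$ is a trivial fibration and $\hom_{\hShg}(EG,-)$ computes the homotopy limit over the one-object groupoid-like diagram $BG$), one gets
$$\pi_k K(M,n)^{hG} = \pi_k \holim_{BG} K(M,n) = H^{n-k}(G;M)$$
for $0 \le k \le n$, where the last equality is the standard identification of the homotopy groups of the homotopy fixed point space of an Eilenberg–MacLane $G$-space with group cohomology. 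Alternatively, and perhaps more in the spirit of the paper, I would build an explicit cosimplicial replacement: write $K(M,n)$ fibrantly as a cofree-type object and use the cobar/homotopy-limit spectral sequence of Bousfield–Kan (cited in the introduction), whose $E_2$-term for the constant diagram on $K(M,n)$ over $BG$ is $H^s(G; \pi_t K(M,n)) = H^s(G;M)$ concentrated in $t = n$, so it collapses and gives $\pi_{t-s} = \pi_{n-s} K(M,n)^{hG} = H^s(G;M)$, i.e. $\pi_k = H^{n-k}(G;M)$.

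The main obstacle I anticipate is purely bookkeeping rather than conceptual: one must verify carefully that the simplicial mapping space $\hom_{\hShg}(EG, K(M,n))$ really is, levelwise, the set of continuous $G$-equivariant $n$-cocycles on $EG \times \Delta[q]$ with values in $M$ — this uses that $EG$ represents the functor $Z \mapsto \Hom_{\hEh}(Z_0,\Gamma)$ only for groups, so for a module $M$ one needs the analogous representability (built into the definition of $K^{\Gamma}(M,n)$ and $L^{\Gamma}(M,n)$ via adjunction (\ref{adjoint})) — and that passing to cohomology commutes with the relevant limits over $\Rh(EG)$, i.e. that continuous cochains on a profinite space behave well. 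Once the identification of the mapping space with $q \mapsto Z^n_G(EG \times \Delta[q];M)$ is in hand, the computation of its homotopy groups is the classical fact that $|q \mapsto C^{\ast}(EG \times \Delta[q])| $ models the two-sided bar construction, so there are no further surprises; the content is ensuring all constructions stay within the profinite world, which the earlier lemmas (cosmallness, Lemma~\ref{actionlemma}, and the model structure of Theorem~\ref{Gmodelunstable}) are exactly designed to provide.
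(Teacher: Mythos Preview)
Your proposal is essentially correct, particularly the second route via the Bousfield--Kan homotopy-limit spectral sequence: since $\pi_t K(M,n)$ is concentrated in $t=n$, the spectral sequence $E_2^{s,t}=H^s(G;\pi_t K(M,n))\Rightarrow \pi_{t-s}K(M,n)^{hG}$ collapses and gives the result immediately. (Your first paragraph is a bit muddled---you conflate $K(M,n)$ with $K^G(M,n)=EG\times_G K(M,n)$---but the fibrancy claim is fine since fibrations in $\hShg$ are detected in $\hSh$.)

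The paper takes a different and more elementary route that avoids the spectral sequence machinery altogether. It first checks the case $k=0$ by hand: $\pi_0\hom_G(EG,K(M,n))=H^n(G;M)$ is just the definition of continuous group cohomology via homogeneous cochains. Then, passing via adjunction~(\ref{adjoint}) to $\hom_{\hSh/BG}(BG,K^G(M,n))$, it exploits the homotopy fibre square
\[
\xymatrix{
K^G(M,n) \ar[d] \ar[r] & BG \ar[d] \\
BG \ar[r] & K^G(M,n+1)}
\]
to conclude that $\hom_{\hSh/BG}(BG,K^G(M,n))\simeq \Omega\,\hom_{\hSh/BG}(BG,K^G(M,n+1))$, so $\pi_k$ of the former is $\pi_0$ of the same construction with $n$ replaced by $n-k$, i.e.\ $H^{n-k}(G;M)$. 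Your spectral-sequence argument is cleaner conceptually and generalizes immediately to arbitrary $X$ (indeed it is essentially the proof of Theorem~\ref{fixeddescent} specialized to $X=K(M,n)$); the paper's argument is lower-tech and self-contained, requiring only the $\pi_0$ identification and the loop-space structure on Eilenberg--MacLane objects, without any tower or convergence discussion.
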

\begin{proof}
By definition of the cohomology $H^{n-k}(G;M)$ via homogeneous continuous cochains, there is an isomorphism $\pi_0\homg(EG,K(M,n))=H^n(G;M)$. The above adjointness (\ref{adjoint}) induces an isomorphism $\pi_0\hom_{\hSh/BG}(BG,K^G(M,n))=H^n(G;M)$. Now, applying the functor $\hom_{\hSh/BG}(BG,-)$ to the homotopy fibre square 
$$\xymatrix{
K^G(M,n) \ar[d] \ar[r] & BG \ar[d] \\
BG \ar[r] & K^G(M,n+1) }$$
shows that $\hom_{\hSh/BG}(BG,K^G(M,n))$ is homotopy equivalent to the loop space $\Omega\hom_{\hSh/BG}(BG,K^G(M,n+1))$. Hence $\pi_k\hom_{\hSh/BG}(BG,K^G(M,n))=H^{n-k}(G;M)$.
\end{proof}

One should note that, as indicated in the formulation of the proposition, there is a little subtlety about homotopy fixed points for profinite spaces. For an arbitrary profinite group $G$ and a fibrant profinite space $X$, $X^{hG}=\homg(EG, X)$ is in general not a profinite space. Nevertheless, $X^{hG}$ is an interesting object for studying actions of profinite groups.\\
The situation is different for example in the case of a $p$-adic analytic profinite group $G$. So for a moment we suppose that $G$ is a $p$-adic analytic group and that $M$ is a profinite $\Z_p[[G]]$-module, being the inverse limit $M=\lim_{\alpha}M_{\alpha}$ of finite $G$-modules $M_{\alpha}$. Let $X=K(M,n)$ be a fibrant Eilenberg MacLane space in $\hSh$. By Proposition \ref{prop3.4},  we know that $\pi_kK(M,n)^{hG}=H^{n-k}(G;M)$.  Moreover, since $G$ is $p$-adic analytic, it has an open normal subgroup which is a Poincar\'e pro-$p$-group. This implies that $H^n(G;M_{\alpha})$ is finite for each $\alpha$ and that  $H^n(G;M)=\lim_{\alpha}H^n(G;M_{\alpha})$. This shows that in this case $X^{hG}$ has itself a natural profinite structure.\\
Back to an arbitrary profinite group $G$. A profinite $G$-space $X \in \hShg$ may be considered as a functor from $G$ as a groupoid to $\hSh$. From this point of view, $\homg(\ast, X)=X^G$ is the limit of this functor in $\hSh$. Moreover, for $X\in \hShg$ fibrant, we can consider $\homg(EG,X)$ as the homotopy limit in $\Sh$.\\ 
As we mentioned earlier, in contrast to our initial belief, there is no simplicial model structure on $\hShg$ in which the weak equivalences are as above but the cofibrations are exactly all monomorphisms. To show this, let us assume there was such a structure. Then any map $\ast \to EG$ from the point to $EG$ would be a trivial cofibration. For $X$ fibrant in $\hShg$, it would induce a weak equivalence $f:\hom(EG,X) \to \hom(\ast,X)$. But since $X$ is fibrant in $\hShg$, the limit $X^G$ would still be fibrant in $\hSh$. But the homotopy groups of $X^G$ would then be profinite groups. So if $f$ is a weak equivalence of spaces, the homotopy groups of $\hom_G(EG,X)$ would have to be profinite as well. But using Proposition \ref{prop3.4}, it is easy to find a counterexample such that the homotopy groups are not profinite.  
So Proposition \ref{prop3.4} and the following theorem show that $\homg(EG,X) \in \Sh$, although not a profinite space, is the object we are interested in here.   
\begin{theorem}\label{fixeddescent}
Let $G$ be a profinite group and let $X$ be a pointed profinite $G$-space. Assume either that $G$ has finite cohomological dimension or that $X$ has only finitely many nonzero homotopy groups. Then there is a convergent descent spectral sequence for the homotopy groups of the homotopy fixed point space starting from continuous cohomology with profinite coefficients: 
$$E_2^{s,t}=H^s(G;\pi_t(X)) \Rightarrow \pi_{t-s}(X^{hG}).$$
\end{theorem}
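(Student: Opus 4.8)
The plan is to identify $X^{hG}$ with the totalization of an explicit cosimplicial space built from the bar resolution of $EG$, and then to run the Bousfield--Kan homotopy spectral sequence of a cosimplicial space \cite{bouskan}; the two hypotheses will be used only to guarantee its convergence.

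Replace $X$ by its functorial fibrant model $RX$ in $\hShg$, so that $X^{hG}=\hom_{\hShg}(EG,RX)$. The profinite $G$-space $EG$ is cofibrant, since $G$ acts freely on each $(EG)_{n}=G^{n+1}$, and it is the homotopy colimit over $\Delta^{\op}$ of the (Reedy cofibrant) simplicial diagram $[n]\mapsto (EG)_{n}=G^{n+1}$ formed by its own levels, each regarded as a constant free profinite $G$-space; this is just the bar resolution of the point in free $G$-spaces. Since $\hShg$ is a simplicial model category (Theorem \ref{Gmodelunstable}) and $RX$ is fibrant, the functor $\hom_{\hShg}(-,RX)$ carries this homotopy colimit to a homotopy limit, so we obtain a weak equivalence
$$X^{hG}\;\simeq\;\Tot Z^{\bullet},\qquad Z^{n}:=\hom_{\hShg}\bigl(G^{n+1},RX\bigr).$$
Because $G^{n+1}$ is the free $G$-orbit $G\times G^{n}$ on the profinite set $G^{n}$, the free/forgetful adjunction identifies $Z^{n}$ with the ordinary simplicial mapping space $\hom_{\hSh}(G^{n},RX)$, and the cosimplicial operators on $Z^{\bullet}$ are those induced by the bar construction.

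Next I would apply the Bousfield--Kan spectral sequence of the cosimplicial space $Z^{\bullet}$, of the shape $E_{2}^{s,t}=\pi^{s}\bigl(\pi_{t}Z^{\bullet}\bigr)\Rightarrow \pi_{t-s}\,\Tot Z^{\bullet}$, with the usual fringe in the columns $t=s,s+1$, where $\pi_{1}$ may be a nonabelian profinite group and $\pi_{0}$ a pointed profinite set. The essential computation is that, for a profinite set $S$, the fibrant simplicial set $\hom_{\hSh}(S,RX)$ has $\pi_{t}\hom_{\hSh}(S,RX)\cong C(S,\pi_{t}X)$, the profinite abelian group of continuous maps from $S$ to the $t$-th profinite homotopy group $\pi_{t}X$ of Definition \ref{defhomotopygroups}; here one uses that $S$ is $0$-dimensional and that fibrancy of $RX$ computes the $\pi_{t}X$. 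Taking $S=G^{n}$, the cosimplicial abelian group $[n]\mapsto \pi_{t}Z^{n}$ is precisely the complex $C^{\bullet}(G;\pi_{t}X)$ of continuous cochains of $G$ with coefficients in $\pi_{t}X$, so $\pi^{s}\bigl(\pi_{t}Z^{\bullet}\bigr)=H^{s}(G;\pi_{t}X)$ is continuous group cohomology; combined with $\pi_{t-s}\Tot Z^{\bullet}=\pi_{t-s}(X^{hG})$ this gives the asserted $E_{2}$-page and abutment.

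The remaining, and in my view the main, point is convergence, since the spectral sequence of a cosimplicial space is only conditionally convergent in general. If $\mathrm{cd}(G)=d<\infty$ then $E_{2}^{s,t}=0$ for $s>d$: a horizontal vanishing line, so in each total degree only finitely many layers of the $\Tot$-tower $\{\Tot_{s}Z^{\bullet}\}$ are nontrivial. If instead $\pi_{t}X=0$ for $t>N$, then $\pi_{t}Z^{\bullet}=0$ for $t>N$: a vertical vanishing line, and again the tower stabilizes degreewise. In either case the relevant $\lim^{1}$-terms vanish, the spectral sequence converges strongly, and the low-degree fringe is controlled by the standard Bousfield--Kan exact sequences of pointed sets and groups, reducing the $\pi_{0}$/$\pi_{1}$ bookkeeping to the connected pointed case. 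One should also verify that all the identifications above are natural in $[n]$, so that the bar differentials really match the continuous-cochain differentials of $G$; this is routine but must be checked with the explicit face maps of $EG$.
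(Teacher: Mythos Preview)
Your proposal is correct and follows essentially the same route as the paper: both identify $X^{hG}$ with the total space of the cosimplicial object $[n]\mapsto \hom_{\hShg}(G^{n+1},RX)\cong \hom_{\hSh}(G^{n},RX)$ coming from the bar resolution of $EG$, apply the Bousfield--Kan cosimplicial spectral sequence, identify $\pi_t$ of the cosimplicial space with the continuous cochain complex $C^{\bullet}(G;\pi_tX)$, and invoke the hypotheses only for convergence via vanishing of $\lim^1$. Your write-up is in fact somewhat more explicit than the paper's about the free/forgetful adjunction and about why each hypothesis produces a vanishing line, but the architecture is identical.
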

\begin{remark}
As in \cite{goerss} 4.9, one should note that this is a second quadrant homotopy spectral sequence whose differentials go
$$d_r:E_r^{s,t}\longrightarrow E_r^{s+r,t+r-1}.$$
Moreover, $E_r^{s,t}$ is not defined for $t-s<0$ and $E_r^{s,s}$ can only receive differentials. In  \cite{goerss} 4.9, Goerss calls such a deformed spectral sequence fringed along the line $t=s$. Complete convergence of such an object is defined by Bousfield and Kan in \cite{bouskan} IX \S 5.3.
\end{remark}
\begin{proof}
This is a version of the homotopy limit spectral sequence of Bousfield and Kan for profinite spaces. We consider the category $c\Sh$ of cosimplicial spaces equipped with the model structure of \cite{bouskan} X, \S 4. There is the cosimplicial replacement functor given in codimension $n$ by $\homg(G^n,X) \in \Sh$, the simplicial space of continuous $G$-equivariant maps. If $X$ is fibrant in $\hShg$, its cosimplicial resolution is a fibrant object in $c\Sh$. Now define the total space of a cosimplicial space $Y$ to be $$\Tot Y:=\lim_s \Tot_s Y$$
where $\Tot_s Y:=\hom (\sk_s\Delta[\cdot],Y)$ in which $\sk_s\Delta[\cdot]$ is the $s$-skeleton of the cosimplicial standard simplex and $\hom$ denotes the usual function space. Then there is a spectral sequence of the cosimplicial replacement of $X$ which is the spectral sequence associated to the tower of fibrations that arises from the total space of the cosimplicial replacement of $X$. We have to check that the $E_2$-term is continuous cohomology of $G$.
By an analogue of \cite{bouskan} X, 7.2, there are natural isomorphisms 
$$E_2^{s,t}\cong \pi^s\pi_t(\homg(G^*,X))$$ 
for $t\geq s \geq 0$, where the right-hand side denotes the $s$th cohomotopy of the cosimplicial group $\pi_t(\homg(G^*,X))$. Since $\homg(G^n,X)$ is fibrant, there are natural isomorphisms of cosimplicial groups $\pi_t\homg(G^*,X)\cong \Hom_{\hEh_G}(G^*,\pi_t X)$ as remarked in \cite{bouskan} XI, 5.7. This implies that the above cohomotopy are cohomology groups of the complex $C^{\ast}(G;\pi_tX)$ given in degree $s$ by the set of continuous maps from $G^s \to \pi_tX$. If $\pi_1X$ is not abelian, this also holds for $s=0,1$, where $H^s(G;\pi_1X)$ is still a pointed set. Hence we have identified the $E_2$-term with the continuous cohomology groups of the statement.\\
It follows from the definition of $\homg(G^*,X)$ that the total space of this cosimplicial object is equal to $\homg(EG,X)$, i.e. the abutment of the spectral sequence is $\pi_{t-s}X^{hG}$. Finally, the assumptions imply $\lim_r^1 E_r^{s,t}$ vanishes and the spectral sequence converges. completely, cf. \cite{bouskan} IX \S 5.3.
\end{proof}

We recall from \cite{profinhom} that the homology $H_{\ast}(X):=H_{\ast}(X;\hZ)$ of a profinite space $X$ is defined to be the homology of the complex $C_{\ast}(X)$ consisting in degree $n$ of the profinite groups $C_n(X):=\hFab (X_n)$, the free abelian profinite group on the profinite set $X_n$. The differentials $d$ are the alternating sums $\sum_{i=0}^{n}d_i$ of the face maps $d_i$ of $X$. If $M$ is a profinite abelian group, then $H_{\ast}(X;M)$ is defined to be the homology of the complex $C_{\ast}(X;M):=C_{\ast}(X)\hat{\otimes}M$, where $\hat{\otimes}$ denotes the completed tensor product, see e.g.~\cite{ribes} \S 5.5.\\ 
For $X \in \hShg$, the homotopy orbit space $X_{hG}:=EG\times_G X$ can be viewed as the homotopy colimit of the $G$-action on $X$. Moreover, the homology $H_s(X;M)$ is itself a profinite $G$-module for any profinite abelian group $M$. This gives rise to the following spectral sequence.
\begin{theorem}\label{homologicalorbitdescent}
Let $X$ be a profinite $G$-space and $M$ a profinite abelian group. There is a first quadrant homology spectral sequence for the homology groups of $X_{hG}$ starting from the continuous homology $H_t(X;M)$ of $G$ with coefficients in the profinite $G$-modules converging to the homology of the homotopy orbit space of $X$:
$$E^2_{s,t}=H_s(G;H_t(X;M)) \Rightarrow H_{s+t}(X_{hG};M).$$
\end{theorem}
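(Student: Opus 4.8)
The plan is to identify the homotopy orbit space with the diagonal of an explicit bisimplicial profinite set and then run the homology spectral sequence of that bisimplicial object, reading off the $E^2$-page as continuous group homology; this is the homology analogue of the Bousfield--Kan construction used for Theorem~\ref{fixeddescent}. In the standard model for the profinite space $EG$ defined above (the one representing $Y\mapsto\Hom_{\hEh}(Y_0,G)$) one has $EG_n=G^{n+1}$ with the free diagonal $G$-action and $BG_n=G^n$, so for $X\in\hShg$ the homotopy orbit space $X_{hG}=EG\times_G X$ has $n$-simplices $(EG\times_G X)_n\cong G^n\times X_n$, where the free action is used to normalise the first coordinate. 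Thus $X_{hG}$ is the diagonal of the two-sided bar construction $B(\ast,G,X)$, i.e.\ of the simplicial object $[s]\mapsto G^s\times X$ in $\hSh$ whose face maps drop $g_1$ (via the trivial right action on $\ast$), multiply adjacent $g_ig_{i+1}$, and let $g_s$ act on $X$. Since the $G$-action on each $G^{n+1}\times X_n$ is free, every $G^s\times X_n$ is a genuine profinite set, $X_{hG}$ is a genuine profinite space, and no cofibrant replacement of $X$ is needed because $EG$ already carries a free action.

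I would then apply the homology spectral sequence of the bisimplicial profinite set $(s,t)\mapsto(G^s\times X)_t$. Its chain complex $C_\ast(X_{hG};M)=C_\ast(\mathrm{diag}\,B(\ast,G,X);M)$ is, by a profinite version of the Eilenberg--Zilber theorem (the shuffle and Alexander--Whitney maps being given by the usual formulas and $\hat\otimes$ being monoidal on profinite abelian groups), chain-homotopy equivalent to the total complex of the double complex $[s]\mapsto C_\ast(G^s\times X;M)$; filtering by columns gives a first-quadrant spectral sequence $E^1_{s,t}=H_t(G^s\times X;M)\Rightarrow H_{s+t}(X_{hG};M)$ with $d^1$ the alternating sum of the bar face maps. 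Because the spectral sequence lies in the first quadrant its convergence is automatic, which is why — in contrast with Theorem~\ref{fixeddescent} — the statement requires no hypothesis on $G$ or on $X$.

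It remains to compute the $E^1$-page and its $d^1$. Since $G^s$ is a constant simplicial profinite set, its chain complex is $\hFab(G^s)$ concentrated in degree $0$; as $\hFab(G^s)$ is a free, hence flat, profinite abelian group (cf.\ \cite{ribes} \S5.5), the completed tensor product with it is exact, so $C_\ast(G^s\times X;M)\cong\hFab(G^s)\,\hat\otimes\,C_\ast(X;M)$ and the Künneth map yields natural isomorphisms $H_t(G^s\times X;M)\cong\hFab(G^s)\,\hat\otimes\,H_t(X;M)$. Under these identifications the simplicial profinite abelian group $[s]\mapsto\hFab(G^s)\,\hat\otimes\,H_t(X;M)$ carries precisely the structure maps of the inhomogeneous bar complex computing the continuous homology of $G$ with coefficients in the profinite $G$-module $H_t(X;M)$: all face maps drop or multiply the $g_i$ except the last one, which acts on $H_t(X;M)$ through the functorial $G$-action inherited from $X$. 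Hence $\pi_s$ of this simplicial abelian group is $H_s\big(G;H_t(X;M)\big)$, so $E^2_{s,t}=H_s\big(G;H_t(X;M)\big)$, as asserted.

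The main obstacle I anticipate is not homotopy-theoretic but a matter of checking that the classical homology spectral sequence of a bisimplicial set survives passage to profinite coefficients — that the Eilenberg--Zilber and double-complex arguments remain valid when ordinary tensor products over $\Z$ are replaced throughout by completed tensor products of profinite abelian groups, and that the Künneth isomorphism and the $G$-module identification are genuinely natural. This rests on the flatness of free profinite abelian groups together with the exactness and monoidality of $\hat\otimes$ (as in \cite{ribes}) and on the homological algebra of profinite modules already developed in \cite{profinhom} and used in Theorem~\ref{fixeddescent}; once these points are in place, the Künneth computation of $E^1$ and the recognition of $d^1$ as the bar differential are routine.
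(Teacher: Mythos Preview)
Your proposal is correct and follows essentially the same route as the paper: both identify $X_{hG}$ with the diagonal of the bar-type bisimplicial object $[s]\mapsto G^s\times X$ and then run the homology spectral sequence of that bisimplicial profinite set. The only difference is packaging of the $E^2$-identification: you compute $E^1$ explicitly via a K\"unneth isomorphism and recognise $d^1$ as the inhomogeneous bar differential, whereas the paper phrases the same computation abstractly as $E^2_{s,t}=\colim_G^s H_t(X;M)$ and then cites \cite{ribes}, Proposition~6.3.4, to identify the left derived functors of $B\mapsto B/G$ with $H_s(G;B)$.
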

\begin{proof}
This is a profinite version of the homotopy colimit spectral sequence of Bousfield and Kan \cite{bouskan}, XII \S 5.7. We can assume that $X$ is fibrant in $\hShg$. By \cite{bouskan}, XII \S 5.2, in order to calculate the homotopy colimit of a diagram, one can first take a simplicial resolution of this diagram. In our case this yields a simplicial profinite space $X\times G^{\ast}$, where, for every $k$, $G^{k}$ denotes the constant simplicial set of the $k$-fold product of $G$. 
The homotopy colimit is then equal to the diagonal of the bisimplicial resolution of $X$ induced by $G$, i.e. 
$$X_{hG}\cong \mathrm{diag}(X\times G^{\ast}) \in \hSh.$$ 
It follows immediately that, by applying homology, the bisimplicial profinite set yields a bisimplicial abelian group which has a profinite structure in each bilevel and in which the maps are continuous group homomorphisms. It is a standard argument to deduce from the bisimplicial abelian group a spectral sequence
$$E^2_{s,t}={\colim_G}^sH_t(X;M) \Rightarrow H_{s+t}(\hocolim_G X;M)$$ 
where $\colim^s_G$ denotes the $s$th left derived functor of the functor induced by the $G$-action. It remains to remark that, all groups being equipped with a natural profinite structure, $\colim^s_G$ is the derived functor of $\colim_G$ in the category of profinite $G$-modules; and that $\colim_G B$ is the orbit group $B/G$ of a profinite $G$-module $B$. Moreover, $H_s(G,B)$ is the $s$th left derived functor of the functor $B\mapsto B/G$ by \cite{ribes}, Proposition 6.3.4.   
\end{proof}

\subsection{Profinite $G$-spectra}
A profinite spectrum $X$ consists of a sequence $X_n \in \hShp$ of pointed profinite spaces for $n\geq0$ and maps $\sigma_n:S^1 \wedge X_n \to X_{n+1}$ in $\hShp$. A morphism $f:X \to Y$ of spectra consists of maps $f_n:X_n \to Y_n$ in $\hShp$ for $n\geq0$ such that $\sigma_n(1\wedge f_n)=f_{n+1}\sigma_n$. We denote by $\hSp$ the corresponding category of profinite spectra. By Theorem 2.36 of \cite{profinhom}, there is a stable homotopy category $\hSHh$ of profinite spectra. In this model structure, a map $f:X\to Y$ is a stable equivalence if it induces a weak equivalence of mapping spaces $\map(Y,E) \to \map(X,E)$ for all $\Omega$-spectra $E$; and $f$ is a cofibration if $X_0 \to Y_0$ and the induced maps $X_n \amalg_{S^1\wedge X_{n-1}}S^1 \wedge Y_{n-1} \to Y_n$ are monomorphisms for all $n$. \\
Now let $G$ be as always a profinite group. We consider the simplicial finite set $S^1$ as a profinite $G$-space with trivial action. 
\begin{defn}\label{Gspectra}
Let $S^1_G$ be a cofibrant replacement of $S^1$ in $\hShg$. We call $X$ a (naive) profinite $G$-spectrum if, for $n\geq 0$, each $X_n$ is a pointed profinite $G$-space and each $S_G^1\wedge X_n \to X_{n+1}$ is a $G$-equivariant map. We denote the category of profinite $G$-spectra by $\hSpg$.
\end{defn}
\begin{theorem}\label{modelGspectra}
There is a model structure on profinite $G$-spectra such that a map is a stable weak equivalence (resp.~fibration) if and only if it is a stable weak equivalence (resp.~fibration) in $\hSp$. The fibrations are the maps with the right lifting property with respect to maps that are weak equivalences and cofibrations. We denote its homotopy category by $\hSHhg$.
\end{theorem}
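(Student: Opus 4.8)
The plan is to mimic the construction of the stable model structure $\hSHh$ on $\hSp$ of \cite{profinhom}, Theorem~2.36, while building in the $G$-action exactly as Theorem~\ref{Gmodelunstable} built it into Theorem~\ref{modelstructure}.

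First I would set up a strict (level-wise) model structure on $\hSpg$ whose weak equivalences and fibrations are the morphisms that are level-wise weak equivalences, resp.\ fibrations, of $\hShpg$ --- equivalently, of $\hShp$ after forgetting the $G$-action --- with cofibrations defined by the left lifting property. Since $\hShpg$ already carries the fibrantly generated structure of Theorem~\ref{Gmodelunstable}, with generating sets $P_G$ and $Q_G$, one obtains generating fibrations and trivial fibrations for the strict structure on $\hSpg$ from $P_G$ and $Q_G$ by applying the evaluation functors $\Ev_n\colon\hSpg\to\hShpg$ and their adjoints, and one verifies the four conditions of the dual of Kan's Theorem~11.3.1 of \cite{hirsch} degree by degree, the needed hypotheses being available from $\hShpg$.

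Next I would pass to the stable structure. Declare a morphism of profinite $G$-spectra to be a stable equivalence (resp.\ stable fibration) precisely when its underlying morphism of profinite spectra is one in $\hSHh$, and a cofibration when it has the left lifting property against the stable trivial fibrations. To realise this as a fibrantly generated model structure I would produce explicit sets $P_G^{\mathrm{st}}$ and $Q_G^{\mathrm{st}}$: the strict generating (trivial) fibrations of the previous step, together with equivariant analogues of the additional generating (trivial) fibrations that \cite{profinhom}, Theorem~2.36, uses to single out the $\Omega$-spectra among the level-wise fibrant objects. The equivariant analogues are built, as in item~3 of the proof of Theorem~\ref{Gmodelunstable}, from the following device: for an open normal subgroup $U\trianglelefteq G$ and finite data $T\in\Th$, $H\in\Gh$, $N\in\Lh$, one passes to the $G/U$-objects $T^{G/U}$, $H^{G/U}$, $N^{G/U}$ with permutation action, for which $G$-equivariant maps out of a profinite $G$-spectrum $Z$ into them are in natural bijection with arbitrary maps out of $Z/U$. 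With $P_G^{\mathrm{st}}$ assembled from such objects, one checks the four conditions of the dual of Kan~11.3.1 as in the proofs of Theorems~\ref{modelstructure} and \ref{Gmodelunstable}; the one step with genuine content is the inclusion $P_G^{\mathrm{st}}$-proj~$\subseteq$ stable equivalences, which one handles by the descent argument of item~3 of the proof of Theorem~\ref{Gmodelunstable}: for $f$ in $P_G^{\mathrm{st}}$-proj the bijection above shows that $f/U$ lies in the non-equivariant class $P^{\mathrm{st}}$-proj, hence $f/U$ is a stable equivalence in $\hSp$ by \cite{profinhom}, Theorem~2.36, and since $f=\lim_U f/U$ over the open normal subgroups $U$, homotopy invariance of this limit --- the level-wise consequence of \cite{profinhom}, Proposition~2.14 --- forces $f$ to be a stable equivalence. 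Left properness descends from $\hSp$, and the last sentence of the theorem is the automatic characterisation of fibrations by the right lifting property against trivial cofibrations.

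I expect the principal obstacle to be condition~1 of Kan's theorem for the \emph{stable} generating fibrations: one must show that the codomains of the maps in $P_G^{\mathrm{st}}$ --- which now include $\Omega$-spectrum fibrant replacements of the equivariant Eilenberg MacLane $G$-spectra assembled from the spaces $K^{\Gamma}(M,n)$, $\Gamma\in\Ghg$, $M\in\Lhg$, together with the auxiliary spectra encoding the $\Omega$-spectrum condition --- are cosmall relative to $P_G^{\mathrm{st}}$-cocell. This means redoing the cosmallness computation of item~1 in the proof of Theorem~\ref{modelstructure}, commuting a colimit over a $\kappa$-indexed projective system past $\Hom_{\hSpg}(-,-)$ into the groups of $G$-equivariant cocycles, but now with extra colimits over the spectrum degree and over the loop coordinates; the finiteness of each $M\in\Lhg$ and the cosmallness of every object of a pro-category (\cite{prospectra}, Corollary~3.5) should still make the comparison map an isomorphism, but the bookkeeping is delicate. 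A secondary point that needs care is the homotopy invariance of $\lim_U$ for profinite $G$-spectra on which the descent to the non-equivariant case rests.
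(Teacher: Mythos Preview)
Your plan is sound and would arrive at the same model structure, but it unpacks by hand what the paper delegates to general machinery. The paper's proof is two sentences: starting from the fibrantly generated model structure on $\hShpg$ of Theorem~\ref{Gmodelunstable}, it invokes Hovey's stabilization procedure \cite{hovey} together with the Bousfield localization results for fibrantly generated model categories from \cite{etalecob}, Theorems~6 and~14, exactly as was done for $\hSh$ to obtain $\hSp$; the identification of stable weak equivalences and fibrations with those of $\hSp$ then follows directly from the level-wise statement in Theorem~\ref{Gmodelunstable}. So there is no separate verification of Kan's conditions, no explicit stable generating sets $P_G^{\mathrm{st}}$, $Q_G^{\mathrm{st}}$, and no descent-via-$U$ argument at the stable level --- all of this is absorbed into the cited general theorems, which already handle the cosmallness and lifting properties once the input category is fibrantly generated. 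Your approach buys a self-contained argument and explicit generating sets, at the cost of precisely the bookkeeping you flag as the principal obstacle; the paper's approach buys brevity by recognising that nothing new happens beyond the unstable case.
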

\begin{proof}
Starting with the model structure on $\hShg$ of Theorem \ref{Gmodelunstable}, the stable model structure is obtained in the same way as for $\hSh$ from the techniques of \cite{hovey} and the localization results of \cite{etalecob}, Theorems 6 and 14, for fibrantly generated model categories. It is also clear from this construction and Theorem \ref{Gmodelunstable} that a map in $\hSpg$ is a stable weak equivalence (resp.~fibration) if and only if it is a stable weak equivalence (resp.~fibration) in $\hSp$. 
\end{proof}

\begin{cor}
If $X$ is a profinite $G$-spectrum then each stable profinite homotopy group $\pi_kX$ is a profinite $G$-module. 
\end{cor}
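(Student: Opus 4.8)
The plan is to compute the stable homotopy groups by means of a fibrant replacement performed inside $\hSpg$, and then to check that the profinite homotopy group of any fibrant profinite $G$-space carries a continuous $G$-action. First I would choose a (functorial) fibrant replacement $X\to RX$ in the model category $\hSpg$ of Theorem \ref{modelGspectra}. Since stable weak equivalences and fibrations in $\hSpg$ are detected on underlying profinite spectra, $RX$ is also stably fibrant in $\hSp$; hence $RX$ is an $\Omega$-spectrum whose terms $(RX)_n$ are fibrant profinite $G$-spaces, and $\pi_kX$ is computed from $RX$. Concretely, choosing $n$ large enough that $k+n\geq 2$, one has a natural isomorphism $\pi_kX\cong\pi_{k+n}\big((RX)_n\big)=\pi_0\big(\Omega^{k+n}(RX)_n\big)$ by Definition \ref{defhomotopygroups}. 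The $G$-action on $(RX)_n$ induces an action by group automorphisms on this group, so everything reduces to the claim: for a profinite $G$-space $Z$, the profinite set $\pi_0Z$, equipped with its induced $G$-action, is a profinite $G$-set.

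To prove this I would use the description of $\pi_0Z$ as the coequalizer in $\hEh$ of $d_0,d_1\colon Z_1\rightrightarrows Z_0$, i.e. $\pi_0Z=Z_0/R$ where $R\subseteq Z_0\times Z_0$ is the smallest closed equivalence relation containing the image of $(d_0,d_1)$. Since $G$ acts continuously on $Z_0$ and $Z_1$ compatibly with $d_0$ and $d_1$, the relation $R$ is $G$-invariant, and then the induced map $G\times(Z_0/R)\to Z_0/R$ is continuous because $\mathrm{id}_G\times q$ is a quotient map for the quotient map $q\colon Z_0\to Z_0/R$ of compact Hausdorff spaces; thus $\pi_0Z\in\hEh_G$. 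Alternatively one may note that $\hEh_G$ is equivalent to the pro-category of finite $G$-sets, which has all finite colimits and whose forgetful functor to $\hEh$ preserves them, so the coequalizer $\pi_0Z$ lifts canonically to $\hEh_G$. In either formulation, applying the claim with $Z=\Omega^{k+n}(RX)_n$ --- a profinite $G$-space whose underlying profinite space is the same whether the iterated loop space is formed in $\hSh$ or in $\hShg$, loops being a levelwise limit construction --- shows that $\pi_kX$ is a profinite $G$-group; as $k+n\geq 2$ it is abelian, hence a profinite $G$-module.

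I expect the main obstacle to be purely one of book-keeping: one must ensure that the $G$-action genuinely survives passage through the fibrant replacement, the iterated loop space, and $\pi_0$, and in particular that each of these is (or can be recognized as) a limit/colimit performed inside the equivariant categories $\hShg$ and $\hEh_G$. This is precisely the point at which one cannot appeal to a right adjoint of the forgetful functor $\hShg\to\hSh$ --- which, as discussed before Theorem \ref{Gmodelunstable}, need not exist --- so the argument must go through the explicit descriptions of $\Omega$ and of $\pi_0$ recalled above.
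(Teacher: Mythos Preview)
The paper states this corollary with no proof at all, evidently regarding it as immediate from the model structure of Theorem \ref{modelGspectra}. Your argument is correct and supplies exactly the details one would expect the reader to fill in: a fibrant replacement taken in $\hSpg$ is also fibrant in $\hSp$ (since fibrations coincide), hence an $\Omega$-spectrum with levelwise fibrant profinite $G$-spaces, so $\pi_kX$ is $\pi_0$ of an iterated loop space formed inside $\hShg$; your check that $\pi_0$ of a profinite $G$-space is a profinite $G$-set---either via the explicit coequalizer and the fact that products with a compact space preserve quotient maps, or by noting that $\hEh_G$ has finite colimits preserved by the forgetful functor---is the natural way to finish. There is nothing to compare against in the paper, and no gap in what you wrote.
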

\subsection{Homotopy orbit spectra}
Our aim is to construct a spectral sequence as above that starts with the continuous homology of $G$ with coefficients the profinite $G$-module $\pi_kX$ and that converges to the stable homotopy groups of the homotopy orbit spectrum $X_{hG}:=EG_+\wedge_G X$ of the $G$-action on $X$.\\ 
For this purpose, we consider the simplicial resolution of the diagram induced by the $G$-action on $X$. As in \cite{bouskan} XII, it is defined to be the simplicial profinite spectrum $X\wedge (G^{\ast})_+$. Here we denote again, for every $k$, by $(G^{k})_+$ the constant simplicial set of the $k$-fold product of $G$ as above but with an additional basepoint and in level $n$, $(X\wedge (G^{k})_+)_n:=X_n\wedge (G^k)_+ \in \hShp$. We may consider $X\wedge (G^{\ast})_+$ either as a bisimplicial profinite set or as a simplicial profinite spectrum. Since the diagonal functor $d$ from pointed bisimplicial profinite sets to pointed simplicial profinite sets commutes with smashing with $S^1$, we may apply the diagonal functor level-wise to get a spectrum $d(X\wedge (G^{\ast})_+)$ with $d(X\wedge (G^{\ast})_+)_n=\mathrm{diag}(X_n\wedge (G^{\ast})_+)$, cf. \cite{jardine} 4.3. The homotopy colimit is then isomorphic to the diagonal spectrum $d(X\wedge (G^{\ast})_+)$ of the simplicial resolution, i.e.  
$$X_{hG}\cong d(X\wedge (G^{\ast})_+) \in \hSp.$$ 
%
For a simplicial spectrum, Jardine shows in \cite{jardine} \S 4, how to construct a spectral sequence that computes the homotopy groups of the diagonal spectrum, Corollary 4.22 of \cite{jardine}:
\begin{equation}\label{4.22}
E^2_{s,t}=H_s(\pi_t(Y_{\ast})) \Rightarrow \pi_{s+t}(d(Y)).
\end{equation}
The whole construction can be applied in the category of simplicial profinite spectra. For a simplicial profinite spectrum $[k]\mapsto Y_k$, the stable homotopy group $\pi_t(Y_k)$ has a natural profinite structure and $\pi_t(Y_{\ast})$ becomes a simplicial profinite abelian group. The resulting complex has continuous differentials. Its homology groups also carry an induced natural profinite structure. Hence (\ref{4.22}) may be viewed as a spectral sequence in profinite abelian groups.\\
We use (\ref{4.22}) for two applications. The first one shows that the diagonal functor from simplicial profinite spectra to profinite spectra and hence the homotopy orbit functor respects weak equivalences.
\begin{prop}
Let $X\to Y$ be a map between simplicial profinite spectra such that, for each $n\geq 0$, the map $X_n \to Y_n$ is a stable equivalence in $\hSp$. Then the induced map $d(X) \to d(Y)$ is a stable equivalence in $\hSp$.
\end{prop}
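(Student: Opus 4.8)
The plan is to deduce the statement formally from the spectral sequence (\ref{4.22}) applied to both simplicial profinite spectra. Since the formation of the diagonal $d(Y)$ and of its skeletal filtration is natural in maps of simplicial profinite spectra, the given map $X_\ast \to Y_\ast$ induces a morphism between the associated spectral sequences (\ref{4.22}) that is compatible with $d(X) \to d(Y)$ on the abutments. It therefore suffices to check that this morphism is an isomorphism on $E^2$-terms.

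Recall from \cite{profinhom} that a map of profinite spectra is a stable equivalence if and only if it induces an isomorphism on all stable profinite homotopy groups. (The two directions use a functorial stably fibrant replacement together with the facts that a stable equivalence between $\Omega$-spectra is a levelwise weak equivalence of pointed profinite spaces, and conversely that a $\pi_\ast$-isomorphism of $\Omega$-spectra of profinite spaces is a levelwise weak equivalence, the spaces involved being nilpotent.) Granting this, the hypothesis that each $X_n \to Y_n$ is a stable equivalence gives that $\pi_t(X_n) \to \pi_t(Y_n)$ is an isomorphism of profinite abelian groups for every $n$ and every $t \in \Z$. Hence $\pi_t(X_\ast) \to \pi_t(Y_\ast)$ is an isomorphism of simplicial profinite abelian groups, and applying $H_s$ (the homotopy groups of these simplicial abelian groups) we get that $H_s(\pi_t(X_\ast)) \to H_s(\pi_t(Y_\ast))$ is an isomorphism of profinite abelian groups for all $s \geq 0$ and $t \in \Z$. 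Thus the morphism of spectral sequences is an isomorphism on $E^2$, hence on every $E^r$, hence on $E^\infty$.

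It then remains to transport the $E^\infty$-isomorphism to an isomorphism of abutments $\pi_k(d(X)) \cong \pi_k(d(Y))$ for all $k$, and to apply the criterion above in the other direction to conclude that $d(X) \to d(Y)$ is a stable equivalence in $\hSp$. The step I expect to be the main obstacle is precisely this last transport: the spectral sequence (\ref{4.22}) is not a first-quadrant spectral sequence in the usual sense — for a fixed total degree $k = s+t$ infinitely many columns can contribute, since the levels $Y_n$ need not be connective — so one must make sure the convergence statement of \cite{jardine} (conditional convergence together with the vanishing of the relevant derived limits) is strong enough to promote an $E^\infty$-isomorphism to an isomorphism of abutments. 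The remaining ingredients — naturality of (\ref{4.22}) and the two directions of the stable-equivalence criterion — are routine given the results recalled in the excerpt, but the criterion deserves to be stated explicitly, since the weak equivalences of $\hSp$ are defined through mapping spaces rather than through homotopy groups.
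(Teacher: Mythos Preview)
Your approach is exactly the one the paper intends: the proposition is stated there without further proof as a direct application of the spectral sequence (\ref{4.22}), and your argument fills in precisely those details (naturality, $E^2$-isomorphism via the $\pi_\ast$-criterion, comparison of abutments). Your caution about convergence is appropriate but does not diverge from the paper, which simply invokes Jardine's convergence result in \cite{jardine}, Corollary 4.22, implicitly.
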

The second application of (\ref{4.22}) is what we were really looking for. Let $X$ be a profinite $G$-spectrum. Then the homology of the simplicial profinite abelian group $\pi_t(X\wedge (G^{\ast})_+)$ is just the continuous group homology $H_s(G,\pi_tX)$ with profinite coefficients $\pi_tX$. Hence we get the following result.
\begin{theorem}\label{homotopyorbitforspectra}
Let $G$ be a profinite group and let $X$ be a profinite $G$-spectrum. Then there is a convergent spectral sequence
$$E^2_{s,t}=H_s(G;\pi_t(X)) \Rightarrow \pi_{s+t}(X_{hG}).$$
\end{theorem}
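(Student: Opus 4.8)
The plan is to obtain this spectral sequence as Jardine's diagonal-spectrum spectral sequence (\ref{4.22}), applied inside the category $\hSp$ of profinite spectra to the simplicial resolution of the $G$-action on $X$. Concretely, $X_{hG}=EG_+\wedge_G X$ is by definition the homotopy colimit of the one-object-groupoid diagram determined by the $G$-action on $X$, and by \cite{bouskan} XII \S 5.2 this homotopy colimit is computed by first replacing that diagram by its simplicial resolution, the simplicial profinite spectrum $[k]\mapsto X\wedge (G^k)_+$, equipped with the usual bar-construction face and degeneracy maps (built from the $G$-action on $X$, the multiplication of $G$, and the basepoint insertions). Since the diagonal functor $d$ commutes with $S^1\wedge-$ it is formed level-wise, so that $X_{hG}\cong d(X\wedge (G^\ast)_+)$ in $\hSp$; that this identification is compatible with the cofibrant replacements needed to make the homotopy colimit homotopy-invariant is exactly what the (unlabelled) proposition above, stating that $d$ sends level-wise stable equivalences to stable equivalences, provides.

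Applying (\ref{4.22}) with $Y_\ast=X\wedge (G^\ast)_+$ then yields a spectral sequence
$$E^2_{s,t}=H_s\bigl(\pi_t(X\wedge (G^\ast)_+)\bigr)\ \Rightarrow\ \pi_{s+t}\bigl(d(X\wedge (G^\ast)_+)\bigr)=\pi_{s+t}(X_{hG}),$$
and it remains to identify the $E^2$-page. As recorded in the discussion preceding the statement, for each $t$ the assignment $[k]\mapsto \pi_t(X\wedge (G^k)_+)$ is a simplicial profinite abelian group with continuous structure maps; level-wise $X\wedge (G^k)_+$ is a cofiltered limit of finite wedges of copies of $X$ indexed by the finite quotients of $G^k$, so that $\pi_t(X\wedge (G^k)_+)\cong \pi_t(X)\,\hat{\otimes}\,\hFab(G^k)$ naturally in $[k]$, and the associated normalized chain complex is the inhomogeneous bar complex computing $\Tor_\ast^{\hZ[[G]]}(\hZ,\pi_t X)$. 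By \cite{ribes} this is the continuous group homology $H_\ast(G;\pi_t X)$ of $G$ with coefficients in the profinite $G$-module $\pi_t X$, whence $E^2_{s,t}\cong H_s(G;\pi_t X)$.

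For convergence, I would use that (\ref{4.22}) is, in Jardine's construction, the spectral sequence of the tower of fibrations induced by the skeletal filtration $\{d(\sk_s Y)\}_s$ of $d(Y)$, a construction that carries over verbatim to $\hSp$ and whose $E^1$- and $E^2$-terms are profinite abelian groups with continuous differentials. The main obstacle I anticipate is checking that this tower-of-fibrations spectral sequence converges strongly in the profinite setting: one needs the connectivity of the successive filtration quotients to grow, so that the relevant $\lim^1$-terms of profinite abelian groups vanish — the same mechanism used in the convergence of Theorems \ref{fixeddescent} and \ref{homologicalorbitdescent}. Granting this, the remaining steps are a routine transcription of \cite{jardine} \S 4 into profinite spectra, using throughout that homotopy groups of profinite spectra and homology groups of simplicial profinite abelian groups are again profinite.
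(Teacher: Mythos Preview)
Your approach is essentially identical to the paper's: apply Jardine's spectral sequence (\ref{4.22}) to the simplicial profinite spectrum $Y_\ast=X\wedge(G^\ast)_+$, use the identification $X_{hG}\cong d(X\wedge(G^\ast)_+)$, and recognize the homology of $\pi_t(X\wedge(G^\ast)_+)$ as continuous group homology $H_s(G;\pi_tX)$. The paper's own argument is in fact terser than yours---it does not spell out the identification $\pi_t(X\wedge(G^k)_+)\cong\pi_t(X)\,\hat{\otimes}\,\hFab(G^k)$ or address convergence explicitly---so your additional detail on the bar-complex description and on the skeletal-filtration convergence is a genuine fleshing-out rather than a deviation.
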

A spectral sequence for the homotopy orbit spectrum under an action of a profinite group $G$ had already been studied in different contexts, in particular by Davis. In \cite{davis}, Davis considers discrete $G$-spectra and calls a spectrum $Y$ an $f$-spectrum if $\pi_qY$ is a finite group for each integer $q$. Let $G$ be a countably based profinite group and $Y_0 \leftarrow Y_1\leftarrow Y_2\leftarrow \ldots$ a tower of $G$-$f$-spectra such that the level-wise taken homotopy limit $Y=\holim_i Y_i$ is a $G$-spectrum. Then the homotopy groups of $Y$ are profinite groups. For this situation, Theorem 5.3 of \cite{davis} provides a spectral sequence as in Theorem \ref{homotopyorbitforspectra}. We remark that since $\pi_qY_i$ is finite, for each $Y_i$ there is some profinite $G$-spectrum $X_i$ which is fibrant in $\hSp$ such that its underlying spectrum, i.e. after forgetting the profinite structure, is weakly equivalent to $Y_i$. The homotopy limit $X$ is then also weakly equivalent to $Y$ and $Y_{hG}$ is weakly equivalent to $X_{hG}$. Hence each tower $Y$ of $G$-$f$-spectra may be considered as a profinite $G$-spectrum and the spectral sequence of Theorem 5.3 of \cite{davis} is a special case of the spectral sequence of Theorem \ref{homotopyorbitforspectra} above that arises naturally in the category of profinite $G$-spectra for an arbitrary profinite group $G$.
\section{Galois actions}
Now we return to our motivating examples for profinite spaces and continuous group actions of the introduction. The starting point for \'etale homotopy theory is the work of Artin and Mazur \cite{artinmazur}. The goal was to define invariants as in Algebraic Topology for a scheme $X$ that depend only on the \'etale topology of $X$. They associated to a scheme $X$ a pro-object in the homotopy category $\Hh$ of spaces. Friedlander rigidified the construction by associating to $X$ a pro-object in the category $\Sh$ of simplicial sets. The construction is technical and we refer the reader to \cite{fried} for any details, in particular for the category of rigid hypercoverings. As a reminder for the reader who is familiar with the techniques, the definition is the following: For a locally noetherian scheme $X$, the {\rm \'etale topological type of $X$} is the pro-simplicial set $\Et X := \mathrm{Re} \circ \pi :HRR(X) \to \Sh$ sending a rigid hypercovering $U_{\cdot}$ of $X$ to the simplicial set of connected components of $U_{\cdot}$. If $f: X \to Y$ is a map of locally noetherian schemes, then the strict map $\Et f:\Et X \to \Et Y$ is given by the functor $f^{\ast}:HRR(Y) \to HRR(X)$ and the natural transformation $\Et X \circ f^{\ast} \to \Et Y$.\\
In \cite{etalecob} and \cite{profinhom}, we studied a profinite version $\hEt$ of this functor by composing $\Et$ with the completion from pro-$\Sh$ to the category of simplicial profinite sets $\hSh$. The advantage of $\hEt X$ is that we have taken the limit over all hypercoverings in a controlled way and obtain an actual simplicial set that still remembers the continuous invariants of $X$. Let us summarize the key properties of $\hEt X$ in the following proposition, which is due to Artin-Mazur \cite{artinmazur} and Friedlander \cite{fried}, but one might also want to have a look at \cite{profinhom} for the comparison with continuous cohomology of Dwyer-Friedlander \cite{dwyfried} and Jannsen \cite{jannsen}. 
\begin{prop}\label{Etsummary}
1. Let $\bar{x}$ be a geometric point of $X$. It also determines a point in $\hEt X$. The profinite fundamental group $\pi_1(\hEt X, \bar{x})$ of $\hEt X$ as an object of $\hSh$ is isomorphic to the \'etale fundamental group $\pi^{\et}_1(X,\bar{x})$ of $X$ as a scheme.\\
2. Let $F$ be pro-object in the category of locally constant \'etale sheaf of finite abelian groups on $X$. It corresponds bijectively to a local coefficient system $F$ of profinite groups on $\hEt X$. Moreover, the cohomology of $\hEt X$ with profinite local coefficients in $F$ equals the continuous \'etale cohomology of $X$, i.e. $H^{\ast}(\hEt X,F)\cong H^{\ast}_{\mathrm{cont}}(X,F)$.
\end{prop}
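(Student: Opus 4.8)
The plan is to deduce both assertions from the analogous facts about Friedlander's pro-simplicial set $\Et X$ of \cite{fried}, using only the formal properties of profinite completion, since $\hEt X$ is by construction the completion of $\Et X$. Recall that $\Et X$ is the pro-object $U_{\cdot}\mapsto \pi U_{\cdot}$ indexed by the cofiltered category $HRR(X)$ of rigid hypercoverings, and that $\hEt X$ is obtained by composing with profinite completion; concretely $\hEt X$ is the cofiltered limit $\lim_{U}\widehat{\pi U_{\cdot}}$ in $\hSh$. Two inputs are available: Friedlander's comparison theorem, which identifies $\pi_1(\Et X,\bar x)$ and $H^{\ast}(\Et X;F_{\alpha})$ with $\pi^{\et}_1(X,\bar x)$ and $H^{\ast}_{\et}(X;F_{\alpha})$; and the completion comparisons already recorded in the excerpt, namely Proposition \ref{profinitecompletion} for $\pi_1$ and the isomorphism $H^{\ast}(Z;\pi)\cong H^{\ast}(\widehat{Z};\pi)$ for finite abelian $\pi$.

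For part 1, I would first extend Proposition \ref{profinitecompletion} termwise to the pro-space $\Et X$: a finite Galois covering of $\widehat{\pi U_{\cdot}}$ is the same datum as a finite Galois covering of $\pi U_{\cdot}$. The point that needs care is that $\pi_1$ of the genuine profinite space $\hEt X=\lim_{U}\widehat{\pi U_{\cdot}}$ is computed from the same cofiltered system: by the definition of the profinite fundamental group as the limit of the automorphism groups of the finite Galois coverings, together with a finite-presentation (cofinality) argument, every finite Galois covering of $\hEt X$ is pulled back from one of some $\widehat{\pi U_{\cdot}}$. Assembling this gives $\pi_1(\hEt X,\bar x)\cong\lim_{U}\widehat{\pi_1(\pi U_{\cdot},\bar x)}$, and Friedlander's identification of the pro-group $\{\pi_1(\pi U_{\cdot},\bar x)\}_{U}$ with the system of Galois groups of finite \'etale covers of $X$ shows that this limit is exactly $\pi^{\et}_1(X,\bar x)$, which is already profinite, hence equal to its own completion.

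For part 2, write $F=\lim_{\alpha}F_{\alpha}$ with each $F_{\alpha}$ locally constant of finite abelian groups. For fixed $\alpha$, the Artin--Mazur/Friedlander comparison gives $H^{\ast}(\Et X;F_{\alpha})=\colim_{U}H^{\ast}(\pi U_{\cdot};F_{\alpha})\cong H^{\ast}_{\et}(X;F_{\alpha})$, and combining the finite-coefficient comparison $H^{\ast}(Z;\pi)\cong H^{\ast}(\widehat{Z};\pi)$ with the compatibility of continuous cohomology with the cofiltered limit $\hEt X=\lim_{U}\widehat{\pi U_{\cdot}}$ yields $H^{\ast}(\hEt X;F_{\alpha})\cong H^{\ast}_{\et}(X;F_{\alpha})$. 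It remains to pass to the limit over $\alpha$. Since $(\hEt X)_n$ is a profinite set, any continuous map $(\hEt X)_n\to F_{\beta}$ factors through a finite quotient and hence lifts along a surjection $F_{\alpha}\twoheadrightarrow F_{\beta}$; thus the tower of cochain complexes $C^{\ast}(\hEt X;F_{\alpha})$ has surjective transition maps, $\lim^{1}$ of its terms vanishes, and $C^{\ast}(\hEt X;F)=\lim_{\alpha}C^{\ast}(\hEt X;F_{\alpha})$. In the resulting Milnor sequence the $\lim^{1}$-term is built from $H^{\ast-1}_{\et}(X;F_{\alpha})$, which vanishes because these groups are finite; hence $H^{\ast}(\hEt X;F)\cong\lim_{\alpha}H^{\ast}_{\et}(X;F_{\alpha})$, which is precisely the continuous \'etale cohomology $H^{\ast}_{\mathrm{cont}}(X;F)$ of Dwyer--Friedlander \cite{dwyfried} and Jannsen \cite{jannsen}. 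Running the same argument fibrewise over the profinite fundamental groupoid $\Pi(\hEt X)$ gives the statement for the local coefficient system $F$.

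The main obstacle is the interchange appearing in part 1: identifying the inverse system of finite Galois coverings of the honest profinite space $\hEt X$ with that of the pro-simplicial set $\Et X$, i.e. checking that forming $\pi_1$ commutes with the cofiltered limit over $HRR(X)$ that produces $\hEt X$ from $\widehat{\Et X}$. Once this bookkeeping is done, part 1 follows immediately from Friedlander's theorem, and in part 2 every potential $\lim^{1}$ obstruction is killed by the finiteness of \'etale cohomology with finite coefficients, so the remaining work there is routine.
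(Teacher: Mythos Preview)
The paper does not supply its own proof of this proposition: it is stated as a summary of results from Artin--Mazur \cite{artinmazur} and Friedlander \cite{fried}, with a pointer to \cite{profinhom} for the comparison with continuous cohomology. Your outline is broadly the route one takes to extract these statements from those references, and the commutation issue you flag in part~1 (that $\pi_1$ of the limit in $\hSh$ agrees with the limit of the $\pi_1$'s over $HRR(X)$) is indeed the substantive point; it is handled in \cite{profinhom}.

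There is, however, a genuine error in your part~2. You assert that the $\lim^1$-term in the Milnor sequence vanishes ``because these groups are finite,'' referring to $H^{\ast-1}_{\et}(X;F_{\alpha})$. For a general locally noetherian scheme this is false: already $H^1_{\et}(\Spec\Q;\Z/2)\cong \Q^{\times}/(\Q^{\times})^2$ is infinite. The Remark immediately following the proposition in the paper explicitly notes that finiteness of these groups is an extra hypothesis, not automatic. Worse, you then identify $\lim_{\alpha} H^{\ast}_{\et}(X;F_{\alpha})$ with $H^{\ast}_{\mathrm{cont}}(X;F)$, which is again only correct when the $\lim^1$-terms vanish.

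The repair is easy and in fact makes the argument cleaner: do not kill $\lim^1$. Jannsen's continuous \'etale cohomology is \emph{defined} so as to sit in the short exact sequence
\[
0 \longrightarrow {\textstyle\lim_{\alpha}^1}\, H^{n-1}_{\et}(X;F_{\alpha}) \longrightarrow H^{n}_{\mathrm{cont}}(X;F) \longrightarrow \lim_{\alpha} H^{n}_{\et}(X;F_{\alpha}) \longrightarrow 0,
\]
and your surjectivity-of-transition-maps argument already places $H^{n}(\hEt X;F)$ in an isomorphic short exact sequence once you know $H^{\ast}(\hEt X;F_{\alpha})\cong H^{\ast}_{\et}(X;F_{\alpha})$. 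Comparing the two sequences via the five lemma gives the claim without any finiteness assumption. Alternatively, in the Dwyer--Friedlander formulation continuous cohomology is essentially defined through the \'etale topological type, so the comparison is close to tautological once the finite-coefficient case is established.
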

\begin{remark}
1. One should note that the set of connected components of $\hEt X$ is equal to the profinite completion of the set of connected components of $X$. As we are usually in the situation that $X$ has only finitely many or even a single connected component, this is not a problem.\\
2. Continuous \'etale cohomology is a more sophisticated version of $\ell$-adic cohomology for schemes. Usually, $\ell$-adic cohomology $H^i(X;\Z_{\ell}(j))$ is defined as an inverse limit over $n$ of $H_{\et}^i(X;\Zln(j))$. If the groups $H_{\et}^i(X;\Zln(j))$ are not finite, $H^i(X;\Z_{\ell}(j))$ might not have all the good properties that one desires. Dwyer-Friedlander \cite{dwyfried} and Jannsen \cite{jannsen} gave a better behaved definition. The one in \cite{dwyfried} works for all locally constant profinite coefficients, the one in \cite{jannsen} yields a derived functor approach for more general inverse systems of coefficients but requires different conditions. The cohomology of $\hEt X$ gives a very direct construction of continuous \'etale cohomology for locally constant profinite coefficients. This explains in which sense $\hEt X$ remembers continuous invariants of $X$, cf. \cite{fried} \S 4. The different definitions of continuous cohomology agree when they are all defined. Moreover, if the groups $H^i(X;\Zln(j))$ are finite for all $n$, then $\ell$-adic and continuous cohomology with $\Z_{\ell}(j)$-coefficients agree. 
\end{remark}
Now let $k$ be a field, $\ok$ a separable closure of $k$ and $G_k:=\Gal(\ok/k)$. Let $X$ be a variety over $k$, i.e. a separated reduced and irreducible scheme of finite type over $k$, and let $\oX=X\otimes_k \ok$ be the base change of $X$ to $\ok$. 
Unfortunately, it is not clear if $\hEt \oX$ is always a profinite $G_k$-space in the above sense. The action of $G_k$ on the sets of connected components of rigid hypercovers might not be continuous in each level. Nevertheless, there is a canonical model for $\hEt \oX$ in $\hShg$.
\begin{lemma}\label{limit}
The canonical $G_k$-equivariant map $\alpha:\hEt \oX\longrightarrow \lim_L \hEt X_L$ is a weak equivalence, where the limit is taken over all finite Galois extensions $L/k$ in $\ok$.
\end{lemma}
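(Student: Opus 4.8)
The plan is to verify directly that $\alpha$ satisfies the three conditions defining a weak equivalence in Definition~\ref{defnwe}, by translating both sides into invariants of schemes via Proposition~\ref{Etsummary} and combining the classical continuity theorems of \'etale cohomology with the fact that the homotopy invariants of a cofiltered limit of profinite spaces are computed at finite stages. First I would set up the limit: since $\Spec\ok=\lim_L\Spec L$, we have $\oX=\lim_L X_L$ as a cofiltered limit of $k$-schemes with affine transition maps, indexed by the directed poset of finite Galois subextensions $L/k$ of $\ok/k$ (directed via the compositum). Each such $L$ is $G_k$-stable, $G_k$ acts on $X_L$ through $\Gal(L/k)$ compatibly with the transition maps $X_{L'}\to X_L$, and $\alpha$ is the map to the limit induced by functoriality of $\hEt$; hence $\alpha$ is automatically $G_k$-equivariant. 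By Definition~\ref{defnwe} it then remains to show that $\alpha$ induces an isomorphism on $\pi_0$, an isomorphism $\pi_1(\hEt\oX,\bar x)\to\pi_1(\lim_L\hEt X_L,\bar x)$ at every basepoint, and an isomorphism $H^q(\lim_L\hEt X_L;\Mh)\to H^q(\hEt\oX;\Mh)$ for every $q\ge0$ and every local system $\Mh$ of finite abelian groups.

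On the scheme side I would invoke the standard continuity statements (SGA4, Exp.~VII, for cohomology; SGA4, Exp.~VIII, and SGA3 for $\pi_1$): for the cofiltered limit $\oX=\lim_L X_L$ with affine transition maps one has $\pi_0(\oX)=\lim_L\pi_0(X_L)$ (a finite set, eventually constant since $X$ is a variety), $\pi_1^{\et}(\oX,\bar x)=\lim_L\pi_1^{\et}(X_L,\bar x)$, and $H^q_{\mathrm{cont}}(\oX;F)=\colim_L H^q_{\mathrm{cont}}(X_L;F)$ for a locally constant sheaf $F$ of finite abelian groups pulled back from a finite stage. By Proposition~\ref{Etsummary}, together with the identification of $\pi_0(\hEt Y)$ with the completion of $\pi_0(Y)$, these become $\pi_0(\hEt\oX)=\lim_L\pi_0(\hEt X_L)$, $\pi_1(\hEt\oX,\bar x)=\lim_L\pi_1(\hEt X_L,\bar x)$, and $H^q(\hEt\oX;\Mh)=\colim_L H^q(\hEt X_L;\Mh)$ for every finite local system $\Mh$ arising from a finite stage.

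The remaining and crucial step is to identify these limits and colimits with the corresponding invariants of the profinite space $\lim_L\hEt X_L$ itself. The key point is that $\lim_L\hEt X_L$ is a cofiltered limit in $\hSh$ of objects which are themselves cofiltered limits of simplicial finite sets, hence is a pro-(simplicial finite set). From this: (i) for finite $A$ a continuous cochain $(\lim_L\hEt X_L)_n\to A$ factors through a finite stage, so $C^*(\lim_L\hEt X_L;A)=\colim_L C^*(\hEt X_L;A)$ and therefore $H^*(\lim_L\hEt X_L;A)=\colim_L H^*(\hEt X_L;A)$, cohomology commuting with filtered colimits of complexes (and likewise with finite local coefficients, once $\pi_1$ is pinned down); (ii) every finite covering space of $\lim_L\hEt X_L$, and every morphism between two such, descends to a finite stage, whence the profinite fundamental group equals $\lim_L\pi_1(\hEt X_L,\bar x)$; and (iii) $\pi_0$ of a cofiltered limit of profinite spaces is the cofiltered limit of the $\pi_0$'s, since cofiltered limits in $\hEh$ are exact --- and for a variety this is trivial anyway, the $\pi_0$-system being essentially constant. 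Alternatively one can replace $\{\hEt X_L\}$ by a levelwise fibrant pro-system and deduce (i)--(iii) from the homotopy invariance of the limit functor on $\hSh$ (\cite{profinhom}, Proposition~2.23) together with a Bousfield--Kan tower argument.

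I expect the main obstacle to be exactly this interchange of the limit over $L$ with the homotopy-theoretic invariants of profinite spaces --- above all making (ii) rigorous, since the $\hEt X_L$ are not levelwise finite and one has to handle the pro-finite structure (or a fibrant replacement) carefully. Once this is in place, the lemma follows by combining it with the scheme-side identities of the second paragraph: the three comparison maps of $\alpha$ are then isomorphisms, so $\alpha$ is a weak equivalence in $\hSh$.
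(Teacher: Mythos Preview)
Your proposal is correct and follows essentially the same strategy as the paper: verify the conditions of Definition~\ref{defnwe} by combining continuity of \'etale invariants on the scheme side with the fact that $\pi_1$ and continuous cohomology of a cofiltered limit in $\hSh$ are computed as a limit, respectively filtered colimit, over the finite stages. The paper's own proof is considerably terser --- it simply asserts that ``fundamental groups of profinite spaces commute with limits'' and that $H^{\ast}(\lim_L\hEt X_L;F)=\colim_L H^{\ast}(\hEt X_L;F)$, cites \cite{sga1} IX \S 6 for the scheme side, and concludes --- whereas you have (rightly) isolated these interchange statements as the substantive content and sketched how to justify them; your anticipated ``main obstacle'' is exactly the step the paper passes over in one line.
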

\begin{proof}
Let $L/k$ be a finite Galois extension with Galois group $G_L$. Since fundamental groups of profinite spaces commute with limits, there is a canonical isomorphism
$$\pi_1(\lim_L \hEt X_L)\cong \lim_L \pi_1(\hEt X_L).$$
Moreover, by \cite{sga1} IX, \S 6, we know that $\pi_1(\hEt \oX)=\pi_1^{\et}(\oX)$ is isomorphic to $\lim_L \pi_1(\hEt X_L)$. This shows that $\alpha$ induces isomorphisms on fundamental groups.\\
It remains to show that $\alpha$ also induces isomorphisms on cohomology with local coefficients of finite abelian groups. This follows from the fact that $H_{\et}^{\ast}(\oX;F)$ is equal to the colimit $\colim_L H_{\et}^{\ast}(X_L;F_L)$ for any locally constant sheaf $F$ on $\oX$ whose pullback to $X_L$ is denoted by $F_L$. From the analogous equality $H^{\ast}(\lim_L \hEt X_L;F)=\colim_L H^{\ast}(\hEt X_L;F)$ and Proposition \ref{Etsummary}, we deduce that $\alpha$ is a weak equivalence.
\end{proof} 

Since the action of $G_k$ on $\hEt X_L$ factors through the finite group $\Gal(L/k)$, this action is continuous on the profinite space $\hEt X_L$. As $G_k$ is the limit of all the $\Gal(L/k)$, this shows that the action of $G_k$ on $\lim_L \hEt X_L$ is continuous, cf. \cite{bourb} III \S 7, No 1. We will use this profinite $G_k$-space as a continuous model for $\hEt \oX$ in $\hShg$ and will denote it by 
$$\chEt \oX:=\lim_L\hEt X_L.$$
\begin{remark}
This problem vanishes if $G_k$ is strongly complete, i.e. if it is isomorphic to the profinite completion of its underlying group $|G_k|$, or in other words, if every subgroup of finite index is open, see \cite{ribes}. In this case, the $G_k$-action on $\hEt \oX$ would be continuous for any variety $X$. The class of strongly complete profinite groups contains the class of all finitely generated profinite groups by the work of Nikolov and Segal \cite{niksegal}. For example the absolute Galois group of $p$-adic local fields are finitely generated, cf. \cite{jw}. The absolute Galois group of a number field is in general not strongly complete as subgroups of finite index which are not open be easily constructed in such groups.
\end{remark}
By \cite{fried} and \cite{profinhom}, we know that $\hEt k$ is homotopy equivalent to $BG_k$ and $\hEt \ok$ to $EG_k$ in $\hSh$. As mentioned in the introduction the natural sequence (\ref{fibersequence}) inspires us to think of $\hEt X$ as the homotopy orbit space of $\hEt \oX$, just as Cox showed for real algebraic varieties in \cite{cox} Theorem 1.1. The following theorem generalizes Cox's result to arbitrary fields. The point is that $\hEt \oX \to \hEt X$ is homotopy equivalent to a principal $G_k$-fibration, see \cite{profinhom}, p. 593.\footnote{In \cite{profinhom} and in a previous version of this paper, it was stated that this map is a principal fibration, which is only true up to homotopy. So one may find here a rigorous treatment of the problem.} 
\begin{theorem}\label{etalehomorbit}
Let $k$ be a field with absolute Galois group $G_k$ and let $X$ be a geometrically connected variety over $k$. Then the canonical map 
$$\varphi:\chEt \oX \times_{G_k} EG_k \to \hEt X$$
is a weak equivalence of profinite spaces.
\end{theorem}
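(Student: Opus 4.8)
The plan is to reduce the statement to the standard fact that, for a free action, the homotopy orbit space agrees up to weak equivalence with the strict orbit space, and to combine this with the principal‑fibration description of $\hEt \oX \to \hEt X$.

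The first step I would establish is that the homotopy orbit functor $(-)_{hG_k} \colon \hShg \to \hSh$, $Y \mapsto Y \times_{G_k} EG_k$, preserves weak equivalences. If $f \colon Y \to Y'$ is a weak equivalence in $\hShg$, then so is $f \times \id_{EG_k} \colon Y \times EG_k \to Y' \times EG_k$, since the two projections to $Y$ and to $Y'$ are weak equivalences ($EG_k$ being simplicially contractible) and one may invoke the $2$-out-of-$3$ property. Moreover $Y \times EG_k$ and $Y' \times EG_k$ are cofibrant in $\hShg$, because the diagonal $G_k$-action on any product with $EG_k$ is free. Now the strict orbit functor $(-)/G_k \colon \hShg \to \hSh$ is left adjoint to the functor equipping a profinite space with the trivial $G_k$-action; the latter is right Quillen by the definition of fibrations and weak equivalences in $\hShg$, so $(-)/G_k$ is left Quillen and hence, by Ken Brown's lemma, preserves weak equivalences between cofibrant objects. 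Composing, $(-)_{hG_k}$ is homotopical.

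Next I would invoke the geometric input recalled above: the map $\hEt \oX \to \hEt X$ is, $G_k$-equivariantly and over $\hEt X$, weakly equivalent to a principal $G_k$-fibration (see the footnote and \cite{profinhom}, p.~593). As $X$ is geometrically connected, $\hEt X$ is connected, so this fibration may be taken to be the pullback $P := EG_k \times_{BG_k} \hEt X$ of the universal principal fibration along the canonical map $\hEt X \to \hEt k \simeq BG_k$; thus $G_k$ acts freely on $P$ and the projection induces an isomorphism $P/G_k \cong \hEt X$. Together with the weak equivalence $\alpha \colon \hEt \oX \to \chEt \oX$ of Lemma~\ref{limit}, this produces a $G_k$-equivariant weak equivalence $\chEt \oX \simeq P$ compatible with the structure maps to $\hEt X$. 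Applying the homotopical functor $(-)_{hG_k}$ and unwinding the resulting identifications, one finds that $\varphi$ is a weak equivalence if and only if the canonical map $P \times_{G_k} EG_k \to P/G_k \cong \hEt X$ is; and the latter is obtained by applying the left Quillen functor $(-)/G_k$ to the projection $P \times EG_k \to P$, a weak equivalence between cofibrant objects of $\hShg$ (free, since $P$ is free). Hence it is a weak equivalence, and so is $\varphi$.

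The step I expect to be the main obstacle is the bookkeeping forced by the fact that $\hEt \oX$ is not known to be an object of $\hShg$ — the $G_k$-action on connected components of rigid hypercoverings need not be level-wise continuous — so that all model-categorical constructions must be carried out with $\chEt \oX$, while the cited input from \cite{profinhom} is phrased in terms of $\hEt \oX$ and must be transported along $\alpha$. One must also check carefully that the weak equivalence $\chEt \oX \simeq P$ is genuinely compatible with the projections to $\hEt X$, and that applying $(-)_{hG_k}$ to it yields a map weakly equivalent to $\varphi$ and not some other map; unwinding the Borel construction and the definition of $P$ makes this routine. Alternatively, the whole argument can be phrased through the Quillen equivalence $(F,U)$ between $\hSh/BG_k$ and $\hShg$ constructed above: the principal-fibration input says exactly that $F(\hEt X) = EG_k \times_{BG_k} \hEt X$ is weakly equivalent in $\hShg$ to $\chEt \oX$, and then $\varphi$ is identified with the derived unit of $(F,U)$ at $\hEt X$, which is a weak equivalence precisely because $(F,U)$ is a Quillen equivalence.
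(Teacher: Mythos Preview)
Your model-categorical framework is correct: $(-)_{hG_k}$ is homotopical, $(-)/G_k$ is left Quillen, and the Quillen equivalence $(F,U)$ behaves as you say. The gap is in what you call the ``geometric input''. The assertion that $\chEt \oX$ is $G_k$-equivariantly weakly equivalent to $P=EG_k\times_{BG_k}\hEt X$ over $\hEt X$ is not supplied by the sources you cite: the footnote says precisely that the statement in \cite{profinhom} was imprecise and that \emph{the present theorem} is its rigorous form. Worse, your alternative phrasing at the end makes the circularity explicit: via the Quillen equivalence $(F,U)$, the claim $F(\hEt X)\simeq \chEt\oX$ in $\hSh_{G_k}$ is literally equivalent to $\hEt X\simeq U(\chEt\oX)=(\chEt\oX)_{hG_k}$ in $\hSh/BG_k$, which is $\varphi$ being a weak equivalence.

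To produce the input independently one would need (a) a $G_k$-equivariant map $\chEt\oX\to P$ over $\hEt X$---this already requires a free $G_k$-action on $\chEt\oX$, which is nowhere established---and then (b) a proof that it is a weak equivalence in $\hSh$, i.e., that $\chEt\oX\to\hEt X\to BG_k$ is a fibre sequence. Step (b) is the actual content, and it is what the paper does directly by verifying the definition of weak equivalence: it compares Grothendieck's short exact sequence $1\to\pi_1^{\et}(\oX)\to\pi_1^{\et}(X)\to G_k\to 1$ from \cite{sga1} with the one arising from the principal fibration $\chEt\oX\times EG_k\to(\chEt\oX)_{hG_k}$, and then compares the Hochschild--Serre spectral sequence for $\oX\to X$ with the Serre spectral sequence of the fibration $\chEt\oX\to(\chEt\oX)_{hG_k}\to BG_k$. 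What you dismiss as ``bookkeeping'' is exactly this content.
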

\begin{proof}
By the definition of weak equivalences, we have to show that $\varphi$ induces an isomorphism on the profinite fundamental groups and on continuous cohomology with finite abelian coefficients systems. Let us start with the fundamental groups. We know from the work of Grothendieck \cite{sga1} IX, Th\'eor\`eme 6.1, that there is a short exact sequence
$$1 \longrightarrow \pi_1^{\et}(\oX,\bar{x}) \longrightarrow \pi_1^{\et}(X,x) \to G_k \longrightarrow 1$$ 
for every geometric point $\bar{x}$ of $\oX$ with image $x$ in $X$. On the other hand, the map of  profinite spaces $\chEt \oX \times EG_k \to \chEt \oX \times_{G_k}EG_k$ is a principal $G_k$-fibration by definition, see \cite{profinhom}. Hence it is also locally trivial, see e.g. \cite{gj} V, Lemma 2.5, and may be considered as a Galois covering with group $G_k$. By the classification of coverings of profinite spaces via the fundamental group in \cite{profinhom}, Corollary 2.3, we deduce that there is a similar short exact sequence for profinite spaces such that $\pi_1(\varphi)$ fits in a commutative diagram
$$
\xymatrix{
1 \ar[r] & \pi_1(\chEt \oX \times EG_k,\bar{x}) \ar[r] \ar[d] & \pi_1(\chEt \oX\times_{G_k} EG_k,x) \ar[r] \ar[d] & G_k \ar[r] \ar@{=}[d] & 1\\
1 \ar[r] & \pi_1^{\et}(\oX,\bar{x}) \ar[r] & \pi_1^{\et}(X,x) \ar[r] & G_k \ar[r] & 1}
$$
for every basepoint $\bar{x}$ of $\oX$. Since $EG_k$ is contractible, the left vertical arrow is an isomorphism and we conclude that $\varphi$ induces an isomorphism on fundamental groups.\\
To prove that $\varphi$ also induces an isomorphism on cohomology we apply two Serre spectral sequences. Let $F$ be a locally constant \'etale sheaf of finite abelian groups on $X$. On the one hand there is the Hochschild-Serre spectral sequence for \'etale cohomology starting from continuous cohomology of $G_k$ with coefficients in the discrete $G_k$-module $H_{\et}^t(\oX;F)$, cf. \cite{milne}: 
$$E_2^{s,t}=H^s(G_k;H_{\et}^t(\oX;F))\Rightarrow H_{\et}^{s+t}(X;F).$$
On the other hand the fibre sequence
$$\chEt \oX \longrightarrow \chEt \oX \times_{G_k} EG_k \longrightarrow BG_k$$
 induces a Serre spectral sequence 
$$E_2^{s,t}=H^s(G_k;H^t(\chEt \oX;F))\Rightarrow H^{s+t}(\chEt \oX\times_{G_k} EG_k;F)$$
where $F$ also denotes the associated local coefficient system on $\hEt \oX$ by Proposition \ref{Etsummary}. This spectral sequence may be constructed in the profinite setting just as in \cite{dress}, see also \cite{ensprofin} \S 1.5 and \cite{dehon} \S 1.5 for pro-$p$-versions. It remains to observe that there is a natural isomorphism between these spectral sequences which is compatible with $\varphi$ using the isomorphism $H_{\et}^t(\oX;F)\cong H^t(\hEt \oX;F)$. Since these groups vanish for $t>\dim\, \oX$, the two spectral sequences are strongly convergent which finishes the proof of the theorem.
\end{proof}

Hence we may consider $\hEt X$ as the homotopy orbit space of $\hEt X_{\ok}$ under its natural Galois action. We will use this key theorem for three applications. On the one hand we deduce Galois descent spectral sequences for \'etale (co)homology theories. The last application is a remark on Grothendieck's section conjecture for smooth proper curves of genus at least two over number fields. But first we show that we can lift this equivalence of the two points of view to the level of motivic spectra \cite{a1hom}. Over a fixed base field $k$, the stable homotopy category $\SHh(k)$ of motivic spectra can be obtained as follows. We start the category $\Delta^{\mathrm{op}}\mathrm{PreShv}(\Sm_k)$ of simplicial presheaves on $\Sm_k$, the category of quasi-projective smooth schemes over $k$, with the projective model structure, i.e. the weak equivalences (fibrations) are objectwise weak equivalences (fibrations) of simplicial sets. Then we localize this model structure with respect to coproducts, Nisnevich hypercovers and maps of the form $X\times \A^1 \to X$ for every $X\in \Sm_k$. This provides a model for motivic spaces. Then we stabilize this construction by considering $\Pro^1_k$-spectra, i.e. sequences of motivic spaces $E_n$ together with maps $\Pro^1_k \wedge E_n \to E_{n+1}$ for every $n\geq 0$, see e.g. \cite{etalecob} for this particular model. The resulting homotopy category of motivic spectra over $k$ is denoted by $\SHh(k)$.\\
We know that the \'etale realization functor above can be extended to a functor from motivic spectra  to the homotopy category of profinite spectra over $\hEt k$, cf. \cite{etalecob}, Theorem 31: 
$$\hEt: \SHh(k) \to \hSHh/\hEt k.$$ 
This extension can be achieved with the model structure on $\hSh$ and $\hSp$ of Theorem \ref{modelstructure} and Theorem 2.36 of \cite{profinhom}, respectively, if $\car k=0$. 
If $\car k=p >0$, we have to complete away from the characteristic by using the $L$-model structure on $\hSh$ and $\hSp$ for any set of primes $L$ with $p \notin L$, see Remark \ref{remZ/p} above. Now we remark that the adjointness discussed in the beginning of Section 2.3 of taking homotopy orbits and pullbacks via maps to $BG_k$ has an analogue for profinite spectra. This implies that we can reconstruct \'etale realization as a functor to the category of profinite $G_k$-spectra. Hence the following theorem is in this sense equivalent to Theorem 31 of \cite{etalecob}.
\begin{theorem}
Let $k$ be a field of characteristic zero and let $G_k$ be its absolute Galois group. The \'etale realization functor above defines a functor 
$$\hEt: \SHh(k) \to \hSHh_{G_k}$$
to the stable homotopy category of profinite $G_k$-spectra by sending a motivic spectrum $E$ to $\hEt E_{\ok}$, where $E_{\ok}$ is the base change of $E$ to $\ok$, with its natural $G_k$-action.\\
If $k$ has positive characteristic $p$, the same statement holds when we equip $\hSh$ and $\hSp$ with the $L$-model structure for any chosen set $L$ of primes not containing $p$.
\end{theorem}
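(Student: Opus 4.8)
The plan is to realize the refined functor as the composite of the \'etale realization functor $\hEt\colon \SHh(k)\to \hSHh/\hEt k$ of \cite{etalecob}, Theorem 31, with the total left derived functor of the Borel--construction functor $F$. Recall from the corollary to Theorem \ref{Gmodelunstable} that $F\colon \hSh/BG_k\to \hShg$, $X\mapsto EG_k\times_{BG_k}X$, is left Quillen, with right adjoint $U=(-)_{hG_k}$, and that $(F,U)$ is a Quillen equivalence. The first step is to promote this to the stable level: $F$ and $U$ are level-wise operations compatible with the suspension bonding maps, so they induce an adjunction between the categories $\hSp/\hEt k$ and $\hSpg$ of profinite spectra; and since the stable model structures on both sides are the localizations of the level-wise structures at the \emph{same} class of stable equivalences (Theorem \ref{modelGspectra} together with Theorem 2.36 of \cite{profinhom}), this adjunction is again a Quillen equivalence. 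In particular the derived functor $\mathbf{L}F\colon \hSHh/\hEt k\to \hSHhg$ exists and is an equivalence of categories, with inverse the derived homotopy orbit functor $\mathbf{R}U$. Using the homotopy equivalence $\hEt k\simeq BG_k$ of \cite{fried} and \cite{profinhom} to view the target of the realization functor as $\hSHh/BG_k$, the refined functor $\hEt\colon \SHh(k)\to\hSHhg$ is then defined to be $\mathbf{L}F$ composed with the realization functor of \cite{etalecob}, Theorem 31.

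It remains to identify the value of this composite on a motivic spectrum $E$. Write $\chEt E_{\ok}:=\lim_L\hEt E_L$ for the continuous model of the base change, $L$ running over the finite Galois extensions of $k$ in $\ok$ and $E_L=E\otimes_k L$; by Lemma \ref{limit}, applied in each spectrum level and extended to all motivic spectra through the construction of $\hEt$, $\chEt E_{\ok}$ is a genuine profinite $G_k$-spectrum. The key input is the stable analogue of Theorem \ref{etalehomorbit}: there is a natural weak equivalence $\hEt E\simeq(\chEt E_{\ok})_{hG_k}$ of profinite spectra over $\hEt k\simeq BG_k$. Granting this, the computation in the proof of the corollary to Theorem \ref{Gmodelunstable} gives
$$\mathbf{L}F(\hEt E)=EG_k\times_{BG_k}\hEt E\simeq EG_k\times_{BG_k}\bigl((\chEt E_{\ok})_{hG_k}\bigr)=EG_k\wedge\chEt E_{\ok}\simeq\chEt E_{\ok},$$
the last step because $EG_k\to\ast$ is a stable equivalence. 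Thus, up to a natural weak equivalence in $\hSHhg$, the composite sends $E$ to $\chEt E_{\ok}$, i.e.\ to $\hEt E_{\ok}$ with its natural $G_k$-action, and, $\mathbf{L}F$ being an equivalence of categories, this functor is in the indicated sense equivalent to the one of \cite{etalecob}, Theorem 31.

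For $\car k=p>0$ the argument is unchanged once $\hSh$ and $\hSp$ carry the $L$-model structure for a set $L$ of primes avoiding $p$: by Remark \ref{remZ/p}, Theorem \ref{Gmodelunstable}, and hence the Quillen equivalence $(F,U)$ and its stabilization, hold $L$-locally, and $\hEt\colon\SHh(k)\to\hSHh/\hEt k$ is constructed $L$-locally in \cite{etalecob}. I expect the only genuinely non-formal point to be the stable analogue of Theorem \ref{etalehomorbit} used above: one must verify that the level-wise equivalences supplied by Theorem \ref{etalehomorbit} assemble into a morphism of profinite spectra over $\hEt k$, i.e.\ that they are compatible with the bonding maps, which reduces to the compatibility of the realization functor with base change and with the suspensions it realizes --- a compatibility built into the construction of $\hEt$ on motivic spectra in \cite{etalecob}. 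Everything else is a direct transcription of results already established above.
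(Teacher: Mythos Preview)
Your proposal is correct and follows essentially the same route as the paper: compose the realization $\hEt\colon \SHh(k)\to \hSHh/\hEt k$ of \cite{etalecob}, Theorem~31, with the stable analogue of the Quillen equivalence $(F,U)$ between $\hSh/BG_k$ and $\hShg$, and then use Theorem~\ref{etalehomorbit} to identify the image of $E$ with $\hEt E_{\ok}$ carrying its Galois action. The paper's argument is the paragraph immediately preceding the theorem statement and is considerably terser than yours; your version spells out the stabilization of $(F,U)$ and the identification step, and you are right to flag that the only non-formal point is the compatibility of the level-wise equivalences from Theorem~\ref{etalehomorbit} with the bonding maps, a point the paper leaves implicit.
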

\subsection{Galois descent}
As an application of the homotopy orbit spectral sequence we consider a variant of \'etale homotopy groups. For a pointed locally noetherian scheme, we define $\pi_{\ast}^{\et,s}X:=\pi_{\ast}^s(\Sigma^{\infty}\hEt X)$ to be the stable \'etale homotopy groups of $X$. The absolute Galois group $G_k$ acts continuously on each profinite group $\pi^{\et,s}_q(\oX)$. There is the following Galois descent spectral sequence for these groups by Theorem \ref{homotopyorbitforspectra}.
\begin{theorem}\label{stablehomotopyetaledescent}
Let $X$ be a geometrically connected variety over a field $k$ with absolute Galois group $G_k$. There is a convergent spectral sequence for the stable \'etale homotopy groups of $X$:  
$$E^2_{p,q}=H_p(G_k;\pi^{\et,s}_q(\oX)) \Rightarrow \pi^{\et,s}_{p+q}(X).$$
\end{theorem}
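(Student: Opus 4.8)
The plan is to deduce this spectral sequence from the homotopy orbit spectral sequence of Theorem~\ref{homotopyorbitforspectra}, applied to the suspension spectrum of the continuous model $\chEt\oX$, and then to rewrite its $E^2$-term and abutment using Lemma~\ref{limit} and Theorem~\ref{etalehomorbit}.

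First I would note that $\chEt\oX=\lim_L\hEt X_L$ is a \emph{pointed} profinite $G_k$-space: a geometric point of $X$ provides a compatible basepoint, and, as recorded just before Theorem~\ref{etalehomorbit}, the $G_k$-action is level-wise continuous because it factors through the finite groups $\Gal(L/k)$. Hence $\Sigma^\infty\chEt\oX$ is a profinite $G_k$-spectrum in the sense of Definition~\ref{Gspectra}, and by the corollary to Theorem~\ref{modelGspectra} each $\pi_q(\Sigma^\infty\chEt\oX)$ is a profinite $G_k$-module. Moreover, the $G_k$-equivariant weak equivalence $\alpha\colon\hEt\oX\to\chEt\oX$ of Lemma~\ref{limit} is a weak equivalence of plain profinite spaces, so (every profinite space being cofibrant) $\Sigma^\infty\alpha$ is a stable equivalence and yields a $G_k$-equivariant isomorphism $\pi_q(\Sigma^\infty\chEt\oX)\cong\pi^{\et,s}_q(\oX)$.

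Next I would apply Theorem~\ref{homotopyorbitforspectra} to the profinite $G_k$-spectrum $\Sigma^\infty\chEt\oX$, which gives a convergent spectral sequence
\[ E^2_{p,q}=H_p\big(G_k;\pi_q(\Sigma^\infty\chEt\oX)\big)\Rightarrow\pi_{p+q}\big((\Sigma^\infty\chEt\oX)_{hG_k}\big). \]
By the previous paragraph the $E^2$-term is $H_p(G_k;\pi^{\et,s}_q(\oX))$. To identify the abutment I would use that the suspension spectrum functor carries the space-level homotopy orbit construction to the spectrum-level one: as in the construction preceding Theorem~\ref{homotopyorbitforspectra}, $(-)_{hG_k}$ is computed as the diagonal of the bar resolution by $G_k$, and the diagonal commutes with smashing by $S^1$, so $(\Sigma^\infty\chEt\oX)_{hG_k}\simeq\Sigma^\infty\big(\chEt\oX\times_{G_k}EG_k\big)$ in $\hSHh$. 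By Theorem~\ref{etalehomorbit} the canonical map $\varphi\colon\chEt\oX\times_{G_k}EG_k\to\hEt X$ is a weak equivalence of profinite spaces, hence $\Sigma^\infty\varphi$ is a stable equivalence and $\pi_{p+q}\big((\Sigma^\infty\chEt\oX)_{hG_k}\big)\cong\pi_{p+q}(\Sigma^\infty\hEt X)=\pi^{\et,s}_{p+q}(X)$. Substituting both identifications into the spectral sequence above yields the claim, with convergence inherited from Theorem~\ref{homotopyorbitforspectra}.

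The one step that really needs care is the interchange of $\Sigma^\infty$ with the Borel construction: one must check that $\Sigma^\infty$ applied to the diagonal of the bisimplicial profinite space resolving $\chEt\oX\times_{G_k}EG_k$ agrees, after realization, with the homotopy orbit spectrum $(\Sigma^\infty\chEt\oX)_{hG_k}=EG_{k+}\wedge_{G_k}\Sigma^\infty\chEt\oX$; equivalently, that the left adjoint $\Sigma^\infty$ preserves the homotopy colimit over $BG_k$ computing homotopy orbits, and that the weak equivalence of Theorem~\ref{etalehomorbit} is compatible with basepoints, which is automatic since $X$, $\oX$ and the spaces built from them are connected. Granting this, everything else is an assembly of Lemma~\ref{limit} and Theorems~\ref{etalehomorbit} and~\ref{homotopyorbitforspectra}.
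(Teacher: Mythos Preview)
Your argument is correct and is exactly the approach the paper intends: the paper states this theorem as an immediate consequence of Theorem~\ref{homotopyorbitforspectra} applied to $\Sigma^{\infty}\chEt\oX$, with the identifications of the $E^2$-term and abutment coming from Lemma~\ref{limit} and Theorem~\ref{etalehomorbit} respectively. You have simply made explicit the steps the paper leaves implicit, including the compatibility of $\Sigma^{\infty}$ with the Borel construction.
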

In the same way we get Galois descent spectral sequences for \'etale topological cohomology theories, e.g. \'etale cobordism \cite{etalecob}. Let $MU$ be the simplicial spectrum representing topological complex cobordism and let $\hMU$ be its profinite completion. In \cite{etalecob}, an \'etale topological version of cobordism for smooth schemes has been studied. It is the theory represented by $\hMU$ via $\hEt$, i.e. in degree $n$ we set 
$$
\hMU_{\et}^{n}(X):=\Hom_{\hSHh}(\Sigma^{\infty}\hEt X, \hMU[n])$$
where $\hMU[n]$ denotes the $n$th shift of $\hMU$ and $X$ is assumed to be a pointed scheme. We can reformulate this definition using function spectra and get an isomorphism
\begin{equation}\label{etalecobfunction}
\hMU_{\et}^{n}(X) \cong \pi_n \hom_{\hSp}(\Sigma^{\infty}\hEt X, \hMU).\end{equation}
Let us denote the function spectrum on the right-hand side of (\ref{etalecobfunction}) by 
$$\hMU_{\et}^X:=\hom_{\hSp}(\Sigma^{\infty}\hEt X, R\hMU)$$
where $R$ means a fibrant replacement in $\hSp$. This description and an analogue of Proposition \ref{etalehomorbit} implies that \'etale cobordism satisfies Galois descent in the following sense, generalizing \cite{dwyfried}, Proposition 7.1. In order to prove this, we start with the following lemma. 
\begin{lemma}\label{finitegaloisdescent}
For each finite Galois extension $L/k$ with Galois group $G_L=\Gal(L/k)$ and $X_L=X\otimes_k L$, there is a natural equivalence of simplicial spectra 
$$\hMU_{\et}^X \stackrel{\simeq}{\longrightarrow} (\hMU_{\et}^{X_L})^{hG_L}$$
where $G_L$ acts on $\hMU_{\et}^{X_L}$ via its induced action on $\hEt X_L$.
\end{lemma}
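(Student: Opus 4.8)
The plan is to reduce the lemma, via a chain of adjunctions, to the finite-extension analogue of Theorem \ref{etalehomorbit} --- that the homotopy orbit space of $\hEt X_L$ under the finite group $G_L$ recovers $\hEt X$ --- so I would first record that analogue. Since $G_L=\Gal(L/k)$ is finite, the induced action on $\hEt X_L$ is automatically levelwise continuous, so $\hEt X_L$ genuinely lies in $\hShg$ for $G=G_L$; moreover $G_L$ acts trivially on $\hEt X$ and the canonical map $\hEt X_L\to\hEt X$ is $G_L$-equivariant. Rerunning the proof of Theorem \ref{etalehomorbit} verbatim with $G_k$ replaced by $G_L$ and $\chEt\oX$ replaced by $\hEt X_L$ --- the fundamental-group input being the exact sequence $1\to\pi_1^{\et}(X_L)\to\pi_1^{\et}(X)\to G_L\to 1$ of \cite{sga1}, and the cohomology input being the comparison of the Hochschild--Serre spectral sequence of $L/k$ with the Serre spectral sequence of $\hEt X_L\to\hEt X_L\times_{G_L}EG_L\to BG_L$, both strongly convergent since the \'etale cohomology of $\oX$ vanishes above $\dim\oX$ --- yields that the canonical map $\varphi_L:\hEt X_L\times_{G_L}EG_L\to\hEt X$ is a weak equivalence of profinite spaces.

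Next I would unwind the right-hand side. Write $R\hMU$ for a fibrant replacement of $\hMU$ in $\hSp$, which carries the trivial $G_L$-action, so that $\hMU_{\et}^{X_L}=\hom_{\hSp}(\Sigma^{\infty}\hEt X_L, R\hMU)$ is a spectrum with a $G_L$-action whose homotopy fixed points are the ordinary homotopy fixed points of this spectrum under the finite group $G_L$. By the smash--function-spectrum adjunction applied $G_L$-equivariantly one has $(\hMU_{\et}^{X_L})^{hG_L}\simeq\hom_{G_L}((EG_L)_+\wedge\Sigma^{\infty}\hEt X_L, R\hMU)$, and since $R\hMU$ has trivial action and $G_L$ acts freely on $EG_L$ this equals $\hom_{\hSp}\big((\Sigma^{\infty}\hEt X_L)_{hG_L}, R\hMU\big)$. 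As $\Sigma^{\infty}$ commutes with homotopy orbits, $(\Sigma^{\infty}\hEt X_L)_{hG_L}\simeq\Sigma^{\infty}(\hEt X_L\times_{G_L}EG_L)$, and one checks that under all of these identifications the canonical map of the lemma is precisely $(\varphi_L)^{\ast}$, the map induced on function spectra by $\varphi_L$.

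Finally, since $\Sigma^{\infty}$ preserves weak equivalences of pointed profinite spaces and $R\hMU$ is fibrant, the functor $\hom_{\hSp}(\Sigma^{\infty}(-), R\hMU)$ carries weak equivalences of profinite spaces to stable equivalences; applied to $\varphi_L$ this gives the claimed equivalence $\hMU_{\et}^X\to(\hMU_{\et}^{X_L})^{hG_L}$, and naturality in $X$ (as well as the compatibility in $L$ needed for the passage to a tower of extensions later) is evident from the construction. I expect the main obstacle to be the finite analogue of Theorem \ref{etalehomorbit}: as flagged in the paper, $\hEt X_L\to\hEt X$ is only a principal $G_L$-fibration up to homotopy, so the fundamental-group step must go through the classification of coverings of profinite spaces rather than a naive long exact sequence, and the cohomology step requires genuinely matching the two spectral sequences (and, as in Theorem \ref{etalehomorbit}, a geometric-connectedness hypothesis on $X$, the general case being handled componentwise over $\oX$); the remaining adjunction bookkeeping is formal.
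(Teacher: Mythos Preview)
Your proposal is correct and follows essentially the same route as the paper: reduce to the homotopy-orbit identification $\hEt X_L\times_{G_L}EG_L\simeq \hEt X$, then pass to function spectra via the smash--hom adjunction and the definition of homotopy fixed points for finite groups. In fact you are slightly more careful than the paper, which simply cites Theorem \ref{etalehomorbit} for the first step even though that theorem is stated for the full absolute Galois group; you rightly observe that what is needed is the finite-extension analogue and that the same proof applies verbatim (the paper acknowledges this implicitly in the proof of Theorem \ref{galoisdescent}).
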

\begin{proof}
The assertion is implied by the following sequence of equivalences, where we omit $\Sigma^{\infty}$: 
$$\begin{array}{rcl}
\hom_{\hSp}(\hEt X, R\hMU) & \stackrel{\simeq}{\longrightarrow} & \hom_{\hSp}(\hEt X_L \times_{G_L} EG_L, R\hMU)\\
 & \stackrel{\simeq}{\longrightarrow} & \hom_{\Spg}(EG_L,\hom(\hEt X_L, R\hMU))\\
  & \stackrel{\simeq}{\longrightarrow} & (\hMU_{\et}^{X})^{hG_L}\end{array}$$
where the first equivalence follows from Theorem \ref{etalehomorbit}, the second follows from adjointness for the simplicial finite set $EG_L$ and the third one is the definition of homotopy fixed point spectra for finite groups acting on simplicial spectra.   
\end{proof}
\begin{theorem}\label{galoisdescent}
Let $k$ be a field with absolute Galois group $G_k$ of finite cohomological dimension and let $X$ be a geometrically connected pointed variety over $k$. There is a convergent spectral sequence 
$$E_2^{s,t}=H^s(G_k;\hMU_{\et}^{t}(\oX)) \Rightarrow \hMU_{\et}^{s+t}(X)$$
starting from continuous cohomology of $G_k$ with coefficients the discrete $G_k$-module $\hMU_{\et}^{\ast}(\oX)$. 
\end{theorem}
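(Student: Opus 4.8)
The plan is to realize the desired spectral sequence as the homotopy fixed point spectral sequence of Theorem~\ref{fixeddescent} applied to a suitable profinite $G_k$-spectrum, then pass to the limit over finite Galois extensions. First I would set up the object: for each finite Galois extension $L/k$ with group $G_L$, Lemma~\ref{finitegaloisdescent} identifies $\hMU_{\et}^X$ with the homotopy fixed points $(\hMU_{\et}^{X_L})^{hG_L}$, where $G_L$ acts on $\hMU_{\et}^{X_L}$ through its action on $\hEt X_L$. I would like to assemble these into a single $G_k$-equivariant statement: the function spectrum $\hom_{\hSp}(\Sigma^\infty\chEt \oX, R\hMU)$ carries a continuous $G_k$-action coming from the continuous $G_k$-action on $\chEt \oX = \lim_L \hEt X_L$, and by the analogue of Theorem~\ref{etalehomorbit} for profinite spectra together with the adjunction between homotopy orbits over $BG_k$ and $G_k$-spectra (the spectrum-level version of adjunction~(\ref{adjoint})), one gets
$$\hMU_{\et}^X \;\simeq\; \hom_{\hSp}(\Sigma^\infty(\chEt\oX \times_{G_k} EG_k), R\hMU) \;\simeq\; \bigl(\hom_{\hSp}(\Sigma^\infty\chEt\oX, R\hMU)\bigr)^{hG_k}.$$
So set $Y := \hom_{\hSp}(\Sigma^\infty\chEt\oX, R\hMU)$, a profinite $G_k$-spectrum; then $\hMU_{\et}^X \simeq Y^{hG_k}$.

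Next I would apply the homotopy fixed point descent spectral sequence for profinite $G$-spectra. The unstable version is Theorem~\ref{fixeddescent}; its stabilization — the homotopy fixed point spectral sequence $E_2^{s,t}=H^s(G_k;\pi_t Y)\Rightarrow \pi_{t-s}(Y^{hG_k})$ — is constructed in exactly the same Bousfield--Kan manner from the cosimplicial replacement $\homg(G_k^{\bullet}, Y)$ of $Y$ in profinite spectra, the hypothesis that $G_k$ has finite cohomological dimension guaranteeing the required $\lim^1$-vanishing and complete convergence. This is the input flagged in the introduction; for the purposes of this proof I would invoke it as the stable analogue of Theorem~\ref{fixeddescent}. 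It remains to identify the $E_2$-term and the abutment. For the abutment, $\pi_{t-s}(Y^{hG_k}) = \pi_{t-s}(\hMU_{\et}^X)$; after reindexing $n = t-s$ this is $\hMU_{\et}^{-n}(X)$, so with the cohomological grading conventions of~(\ref{etalecobfunction}) the abutment is $\hMU_{\et}^{s+t'}(X)$ in the indexing of the statement (I would just track the bookkeeping carefully to match $H^s(G_k;\hMU_{\et}^{t}(\oX))\Rightarrow \hMU_{\et}^{s+t}(X)$). For the $E_2$-term, I need $\pi_t Y \cong \hMU_{\et}^{-t}(\oX)$ as a $G_k$-module; this follows from $\pi_t\hom_{\hSp}(\Sigma^\infty\chEt\oX, R\hMU) = \hMU_{\et}^{-t}(\chEt\oX)$ together with Lemma~\ref{limit} identifying $\chEt\oX \simeq \hEt\oX$, so $\hMU_{\et}^*(\chEt\oX)\cong\hMU_{\et}^*(\oX)$ compatibly with the Galois actions, and the module is discrete because every cohomology class comes from some finite level $\hEt X_L$.

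The main obstacle, and the point requiring the most care, is the equivariant rigidification in the first paragraph: justifying that $Y = \hom_{\hSp}(\Sigma^\infty\chEt\oX, R\hMU)$ is genuinely a profinite $G_k$-spectrum in the sense of Definition~\ref{Gspectra} (levelwise continuous $G_k$-action compatible with the structure maps) and that $Y^{hG_k}\simeq \hMU_{\et}^X$. One subtlety is that $\chEt\oX$ is used rather than $\hEt\oX$ precisely because the latter need not be a profinite $G_k$-space; I would therefore work throughout with $\chEt\oX$, which by construction is a continuous $G_k$-space, and only invoke Lemma~\ref{limit} at the end when comparing cohomology groups. The other subtlety is that the spectrum-level homotopy-orbit/pullback adjunction over $BG_k$, asserted in the text to have "an analogue for profinite spectra," must actually be in hand — it follows from the Quillen equivalence $(F,U)$ between $\hSh/BG_k$ and $\hShg$ by stabilizing, and I would spell out that the equivalence $\chEt\oX\times_{G_k}EG_k\simeq \hEt X$ of Theorem~\ref{etalehomorbit} feeds into it. Once these identifications are secured, the spectral sequence is simply the stable homotopy fixed point spectral sequence for $Y$, and convergence is immediate from $\mathrm{cd}(G_k)<\infty$.
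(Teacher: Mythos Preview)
Your overall strategy---identify $\hMU_{\et}^X$ as a homotopy fixed point spectrum and read off the descent spectral sequence---is sound in spirit, but there is a genuine gap precisely at the point you flag as the ``main obstacle'': the function spectrum $Y=\hom_{\hSp}(\Sigma^{\infty}\chEt\oX, R\hMU)$ is \emph{not} a profinite $G_k$-spectrum in the sense of Definition~\ref{Gspectra}. Mapping spaces and function spectra out of profinite objects land in ordinary simplicial sets, not in $\hSh$; the paper makes this explicit just after Proposition~\ref{prop3.4} for $X^{hG}$, and treats $\hMU_{\et}^{X}$ as a \emph{simplicial} spectrum in Lemma~\ref{finitegaloisdescent}. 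Consequently you cannot invoke ``the stable analogue of Theorem~\ref{fixeddescent}'' inside the category $\hSpg$, and the cosimplicial replacement ``in profinite spectra'' you describe is not available. The theorem's own wording (``discrete $G_k$-module $\hMU_{\et}^{\ast}(\oX)$'') is a hint that the relevant action on $Y$ is discrete rather than profinite, so a different framework (e.g.\ discrete $G$-spectra) would be needed to run your argument, and that machinery is not set up in this paper.

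The paper sidesteps this entirely. For each finite Galois extension $L/k$ with group $G_L$, Lemma~\ref{finitegaloisdescent} gives $\hMU_{\et}^X\simeq(\hMU_{\et}^{X_L})^{hG_L}$ as ordinary simplicial spectra with a \emph{finite} group action, so the classical homotopy fixed point spectral sequence yields $H^s(G_L;\hMU_{\et}^t(X_L))\Rightarrow \hMU_{\et}^{s+t}(X)$ for every $L$. One then takes the filtered colimit over $L$: Lemma~\ref{limit} gives $\hMU_{\et}^t(\oX)\cong\colim_L\hMU_{\et}^t(X_L)$, hence $H^s(G_k;\hMU_{\et}^t(\oX))\cong\colim_L H^s(G_L;\hMU_{\et}^t(X_L))$, and since spectral sequences commute with filtered colimits the result follows. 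This colimit-of-finite-stages argument avoids ever having to place $Y$ in a model category of continuous $G_k$-spectra.
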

\begin{proof}
Each finite quotient $G_L$ of $G_k$ induces a finite Galois covering $X_L \to X$ which is homotopy equivalent to finite Galois covering of the profinite space $\hEt X$ using the argument in the proofs of Theorem \ref{etalehomorbit} and Lemma \ref{limit}. The well-known homotopy fixed point spectral sequence for finite groups acting on simplicial spectra together with the Lemma \ref{finitegaloisdescent} yield a spectral sequence
$$E_2^{s,t}=H^s(G_L;\hMU_{\et}^{t}(X_L)) \Rightarrow \hMU_{\et}^{s+t}(X)$$ 
for every $i$. Now the weak equivalence $\hEt \oX \simeq \lim_L \hEt X_L$ of Lemma \ref{limit} implies $\hMU_{\et}^{t}(\oX)\cong \colim_L \hMU_{\et}^{t}(X_L)$ and hence there is an isomorphism $$H^s(G_k;\hMU_{\et}^{t}(\oX))\cong \colim_L H^s(G_L;\hMU_{\et}^{t}(X_L)).$$ Since spectral sequences commute with colimits, this implies the assertion of the theorem.
\end{proof}

The homological counterpart, called \'etale bordism, is defined as
$$\hMU^{\et}_{n}(X):=\Hom_{\hSHh}(S^n,\Sigma^{\infty}\hEt X\wedge \hMU).$$ 
In this case, the descent spectral sequence for \'etale bordism has a more direct construction as the homotopy orbit spectral sequence of a generalized homology theory as in Theorem \ref{homologicalorbitdescent} above.
\begin{theorem}\label{homologyetaledescent}
Let $k$ be a field with absolute Galois group $G_k$ and let $X$ be a geometrically connected pointed variety over $k$. There is a convergent spectral sequence for the \'etale bordism of $X$:  
$$E^2_{s,t}=H_s(G_k;\hMU^{\et}_{t}(\oX)) \Rightarrow \hMU^{\et}_{s+t}(X).$$
\end{theorem}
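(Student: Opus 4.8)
The plan is to exhibit the spectral sequence as an instance of the homotopy orbit spectral sequence for profinite $G_k$-spectra, Theorem~\ref{homotopyorbitforspectra}, applied to the profinite $G_k$-spectrum obtained by smashing $\Sigma^{\infty}\chEt\oX$ with $\hMU$.

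First I would set $Y:=\Sigma^{\infty}\chEt\oX\wedge\hMU$, where $\hMU$ carries the trivial $G_k$-action and the action on $\chEt\oX=\lim_L\hEt X_L$ is the continuous one discussed before Theorem~\ref{etalehomorbit}. Since on each $\hEt X_L$ the action factors through the finite group $\Gal(L/k)$, and smashing with the fixed profinite spectrum $\hMU$ is compatible with all these actions, $Y$ is an object of $\hSpg$ for $G=G_k$; in particular each $\pi_t Y$ carries a profinite $G_k$-module structure. Lemma~\ref{limit}, together with the definition of \'etale bordism, then identifies these modules: the weak equivalence $\hEt\oX\simeq\chEt\oX$ gives $\pi_t(Y)\cong\pi_t(\Sigma^{\infty}\hEt\oX\wedge\hMU)=\hMU^{\et}_t(\oX)$ as profinite $G_k$-modules.

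Next, applying Theorem~\ref{homotopyorbitforspectra} to $Y$ produces a convergent spectral sequence $E^2_{s,t}=H_s(G_k;\pi_t(Y))\Rightarrow\pi_{s+t}(Y_{hG_k})$, whose $E^2$-term is already the one asserted. It remains to identify the abutment. Because $\hMU$ has trivial $G_k$-action, the homotopy orbit functor commutes with $-\wedge\hMU$, so $Y_{hG_k}\simeq(\Sigma^{\infty}\chEt\oX)_{hG_k}\wedge\hMU=\Sigma^{\infty}(\chEt\oX\times_{G_k}EG_k)\wedge\hMU$; by Theorem~\ref{etalehomorbit} the canonical map $\chEt\oX\times_{G_k}EG_k\to\hEt X$ is a weak equivalence of profinite spaces, hence $\Sigma^{\infty}$ of it is a stable equivalence, and therefore $\pi_{s+t}(Y_{hG_k})\cong\pi_{s+t}(\Sigma^{\infty}\hEt X\wedge\hMU)=\hMU^{\et}_{s+t}(X)$. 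Combining the two identifications gives the stated spectral sequence.

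The step I expect to be the main obstacle is the interchange of the homotopy orbit construction with $-\wedge\hMU$, together with the verification that $Y$ is cofibrant enough that $Y_{hG_k}$ computes the derived homotopy orbits. I would argue through the explicit simplicial resolution model $Y_{hG_k}\cong d(Y\wedge(G_k^{\ast})_+)$ used in the proof of Theorem~\ref{homotopyorbitforspectra}, noting that $d(Y\wedge(G_k^{\ast})_+)\cong d(\Sigma^{\infty}\chEt\oX\wedge(G_k^{\ast})_+)\wedge\hMU=(\chEt\oX)_{hG_k}\wedge\hMU$ since the diagonal functor and the smash with the fixed spectrum $\hMU$ commute levelwise, and invoking the fact --- established there via (\ref{4.22}) --- that the diagonal functor preserves stable equivalences. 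Once this is in place, everything else is a formal consequence of Lemma~\ref{limit} and Theorem~\ref{etalehomorbit}.
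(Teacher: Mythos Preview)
Your proposal is correct and follows exactly the approach the paper indicates: the paper simply remarks that the spectral sequence ``has a more direct construction as the homotopy orbit spectral sequence'' and gives no further details, so you have supplied precisely the argument intended---apply Theorem~\ref{homotopyorbitforspectra} to $Y=\Sigma^{\infty}\chEt\oX\wedge\hMU$, identify $\pi_t Y$ via Lemma~\ref{limit}, and identify the abutment via Theorem~\ref{etalehomorbit}. Your care about the interchange $Y_{hG_k}\simeq(\Sigma^{\infty}\chEt\oX)_{hG_k}\wedge\hMU$ through the explicit diagonal model is exactly the right justification and goes beyond what the paper makes explicit.
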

\subsection{A remark on Grothendieck's section conjecture}
We conclude with an application of the developed theory of homotopy fixed points to Galois actions.\footnote{I would like to thank Kirsten Wickelgren for interesting discussions about this topic.}  Let us briefly recall the statement of Grothendieck's section conjecture \cite{grothendieck}. It is part of a much more general picture drawn by Grothendieck in \cite{grothendieck} which predicts that, for some class of varieties over $k$, the functor of taking fundamental groups should be in some sense fully faithful. Detailed accounts on the conjecture can be found e.g. in \cite{mochizuki1}, \cite{kim} and \cite{stix}. Let $k$ be a field and $G_k$ its absolute Galois group. Let $X$ be a geometrically connected variety over $k$. We have already used that the functoriality of $\pi_1=\pi_1^{\et}$ induces a short exact sequence
$$
\xymatrix{1 \ar[r] & \pi_1\oX \ar[r] & \pi_1X\ar[r] & G_k \ar[r] & 1}
$$
where we omit the basepoints for this discussion. Another application of the functoriality of $\pi_1$ shows that every $k$-rational point $a\in X(k)$ induces a section $s_a:G_k \to \pi_1X$ which is well-defined up to conjugacy by $\pi_1\oX$. 
The section conjecture of Grothendieck's in \cite{grothendieck} predicts that this map has an inverse.
\begin{conjecture}\label{sectionconjecture} {\rm (Grothendieck)}
Let $k$ be a field which is finitely generated over $\Q$ and let $X$ be a smooth, projective curve of genus at least two. The map $a\mapsto s_a$ is a bijection between the set $X(k)$ of $k$-rational points of $X$ and the set of $\pi_1\oX$-conjugacy classes of sections $G_k \to \pi_1X$.
\end{conjecture}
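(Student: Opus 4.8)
\emph{The final statement is Grothendieck's section conjecture, which is open.} Accordingly, what follows is not a proof but the strategy that the homotopy-fixed-point picture assembled in this paper makes available, together with an honest account of where it stalls.

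The first step is the reinterpretation recorded in (\ref{scintro}). A smooth projective curve of genus $\geq 2$ over a field finitely generated over $\Q$ is a $K(\pi,1)$ in the \'etale sense, so $\hEt\oX\simeq B\pi_1\oX$; combining this with Theorem \ref{etalehomorbit} and the adjunction between homotopy orbits over $BG_k$ and profinite $G_k$-spaces gives
\[
\Hom_{\hHh/\hEt k}(\hEt k,\hEt X)\;\cong\;\pi_0\big((\chEt\oX)^{hG_k}\big)\;\cong\;\Hom_{\mathrm{out},G_k}(G_k,\pi_1X),
\]
where the descent spectral sequence of Theorem \ref{fixeddescent}, applied to the one-type $B\pi_1\oX$, identifies $\pi_0$ of the homotopy fixed points with the orbit set of sections of $1\to\pi_1\oX\to\pi_1X\to G_k\to 1$ under conjugation. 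Conjecture \ref{sectionconjecture} thus becomes the assertion that the canonical map $X(k)\to \pi_0\big((\chEt\oX)^{hG_k}\big)$ is a bijection, and I would prove injectivity and surjectivity separately.

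Injectivity --- distinct $k$-points give nonconjugate sections --- is available from Mochizuki's anabelian results on hyperbolic curves and can be imported; alternatively one can try to separate the images of two points in $\pi_0$ of the homotopy fixed points by cohomology classes detected on the $E_2$-page of Theorem \ref{fixeddescent}. Surjectivity is the substance: from a section, equivalently a class in $\pi_0\big((\chEt\oX)^{hG_k}\big)$, one must manufacture a $k$-point. Here I would imitate the archimedean case: over $\R$, Cox's theorem presents $\Et X$ as a Borel construction for $\Gal(\C/\R)$, a section becomes a homotopy fixed point of $X(\C)$ under conjugation, and Pal (on top of Mochizuki's real section conjecture) shows every such homotopy fixed point is realized by a genuine fixed point, i.e. a real point. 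Theorem \ref{etalehomorbit} is precisely the replacement for Cox's theorem over an arbitrary base, so the formal skeleton of that argument transports; one would first descend along finite Galois extensions (Lemma \ref{limit}, Lemma \ref{finitegaloisdescent}) to reduce, as far as possible, to local fields.

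The main obstacle is exactly this surjectivity, which is why the conjecture is open. Over $\R$ the group $\Gal(\C/\R)$ has cohomological control making the obstruction tower that computes $(\chEt\oX)^{hG_k}$ finite, so a homotopy fixed point is forced to be geometric; over a number field $G_k$ has infinite cohomological dimension, the Postnikov obstruction tower for the homotopy fixed point space does not visibly terminate, and nothing in the homotopy-theoretic formalism by itself compels a $G_k$-equivariant map $EG_k\to\chEt\oX$ to deform to the constant map at a $k$-point --- this is the classical gap, now merely rephrased. Real progress would require new input: a finiteness or local--global principle for the relevant degree-two obstruction, compatible with Stix's Brauer-type obstruction, or a reduction via Theorem \ref{etalehomorbit} and $p$-adic Hodge theory to the $p$-adic section conjecture over $\Q_p$. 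Absent such input, the machinery of this paper supplies the correct homotopical home for the problem but not its resolution.
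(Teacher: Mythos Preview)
You correctly recognize that this is an open conjecture, not a theorem with a proof in the paper; the paper itself offers no proof but only the homotopy-fixed-point reformulation via Theorem \ref{etalehomorbit} and the adjunction (\ref{adjoint}), together with the observation that injectivity is known and surjectivity is the hard part. Your discussion of the strategy and its obstacles matches the paper's own remarks in the final subsection essentially point for point.
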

It is well-known that the map $a\mapsto s_a$ is injective. The hard part is the surjectivity. The varieties that the Grothendieck conjecture is about, especially the curves of Conjecture \ref{sectionconjecture}, are so called $K(\pi,1)$-varieties. And here is the point where \'etale homotopy enters the stage. In terms of \'etale homotopy theory a variety $X$ is a $K(\pi,1)$-variety if the profinite universal covering space of $\hEt X$ is contractible. Or in other words, $\hEt X$ is weakly equivalent in $\hSh$ to the profinite classifying space $B\pi_1X$. Just as for spaces, it is also known for profinite spaces that there is a bijection between the set of homotopy classes of continuous maps of Eilenberg-MacLane spaces $\Hom_{\hHh}(K(G,1) \to K(\pi,1))$ and the set of outer continuous group homomorphisms $\Hom_{\mathrm{out}}(G,\pi)$. In light of the previous discussion this shows there is a bijection
\begin{equation}\label{scashomotopy}
\Hom_{\hHh/\hEt k}(\hEt k,\hEt X) \cong \Hom_{\mathrm{out},G_k}(G_k,\pi_1X),
\end{equation}
where the right-hand side denotes outer homomorphisms that are compatible with the projection to $G_k$. So Conjecture \ref{sectionconjecture} may be restated in the way that $\hEt$ is a fully faithful functor from $k$-rational points to homotopy classes of maps from $\hEt k$ to $\hEt X$.\\ 
All this is of course just a reformulation. But the point we want to stress is that the machinery of Galois actions developed above provides an interesting point of view for Conjecture \ref{sectionconjecture}. From the natural adjunction (\ref{adjoint}) induced by taking homotopy orbits  and the canonical homotopy equivalence $\hEt k \simeq BG_k$ we deduce by Theorem \ref{etalehomorbit} that there is a further canonical bijection
\begin{equation}\label{scashf}
\Hom_{\hHh/\hEt k}(\hEt k,\hEt X) \cong \Hom_{\hHh_{G_k}}(EG_k,\chEt \oX)\cong \pi_0(\chEt \oX)^{hG_k}\end{equation}
and hence in order to prove the section conjecture one could try to prove that the induced map
$$X(k)\longrightarrow \pi_0(\chEt \oX)^{hG_k}$$
is a bijection.\\
The slight shift of the point of view is very interesting, since the section conjecture over the real numbers $\R$ and topological analogues of it could be proved by Pal in \cite{pal1} using fixed and homotopy fixed point methods for finite groups, see also \cite{kirsten}. Further studies in this direction have been done by Pal in \cite{pal2}. The new input of this paper consists in Theorem \ref{etalehomorbit} and in the rigorous framework for homotopy fixed points for \'etale homotopy types for varieties over any base field which had been missing so far.\\
What one would like to do now is to factor the map
\begin{equation}\label{sectionhomfixedpoints}
X(k) \to \pi_0(\chEt \oX)^{hG_k}\end{equation}
and one would like to factor this map through some fixed point set under the $G_k$-action. Then the geometric part of the problem would be to show that $X(k)$ is isomorphic to this fixed point set. The homotopy theoretical part would be to show that the fixed point set is isomorphic to the homotopy fixed point set in the right-hand side of (\ref{sectionhomfixedpoints}). This might be possible by transferring comparison results for finite groups to the case of the profinite groups $G_k$ acting on the profinite space $\chEt \oX$, cf. \cite{carlsson}.  
\bibliographystyle{amsplain}

Mathematisches Institut, Universit\"at M\"unster, Einsteinstr. 62, D-48149 M\"unster\\
E-mail address: gquick@math.uni-muenster.de\\
Homepage: www.math.uni-muenster.de/u/gquick
\end{document}